\newtheorem{theorem}{Theorem}[section]
\newtheorem{remark}[theorem]{Remark}
\newtheorem{cor}[theorem]{Corollary}
\newenvironment{corollary}{\begin{cor} \em}{\end{cor}}
\newtheorem{tont}[theorem]{Definition}
\newenvironment{definition}{\begin{tont} \em}{\end{tont}}
\newtheorem{lemma}[theorem]{Lemma}
\newtheorem{proposition}[theorem]{Proposition}
 \DeclareMathOperator\codim{codim}
\DeclareMathOperator\Ker{Ker}
\DeclareMathOperator\Ima{Im}
\begin{document}

\title [Associated points and integral closure of modules]{Associated points and integral closure of modules}
\author{Antoni Rangachev}
\begin{abstract} Let $X:=\mathrm{Spec}(R)$ be an affine Noetherian scheme, and $\mathcal{M} \subset \mathcal{N}$ be a pair of finitely generated $R$-modules. Denote their Rees algebras by $\mathcal{R}(\mathcal{M})$ and $\mathcal{R}(\mathcal{N})$. Let  $\mathcal{N}^{n}$ be the $n$th homogeneous component of  $\mathcal{R}(\mathcal{N})$ and let $\mathcal{M}^{n}$ be the image  of the $n$th homegeneous component of $\mathcal{R}(\mathcal{M})$ in $\mathcal{N}^n$. Denote by $\overline{\mathcal{M}^{n}}$ be the integral closure of $\mathcal{M}^{n}$ in $\mathcal{N}^{n}$. We prove that  $\mathrm{Ass}_{X}(\mathcal{N}^{n}/\overline{\mathcal{M}^{n}})$ and  $\mathrm{Ass}_{X}(\mathcal{N}^{n}/\mathcal{M}^{n})$ are asymptotically stable, generalizing known results for the case where $\mathcal{M}$ is an ideal or where $\mathcal{N}$ is a free module. Suppose either that $\mathcal{M}$ and $\mathcal{N}$ are free at the generic point of each irreducible component of $X$ or $\mathcal{N}$ is contained in a free $R$-module. When $X$ is universally catenary, we prove a  generalization of a classical result due to McAdam and obtain a geometric classification of the points appearing in $\mathrm{Ass}_{X}(\mathcal{N}^{n}/\overline{\mathcal{M}^{n}})$. Notably, we show that if $x \in \mathrm{Ass}_{X}(\mathcal{N}^{n}/\overline{\mathcal{M}^{n}})$ for some $n$, then $x$ is the generic point of a codimension-one component of the nonfree locus of $\mathcal{N}/\mathcal{M}$ or $x$ is a generic point of an irreducible set in $X$ where the fiber dimension $\mathrm{Proj}(\mathcal{R}(\mathcal{M})) \rightarrow X$ jumps. We prove a converse to this result without requiring $X$ to be universally catenary.  Our approach is geometric in spirit. Also, we recover, strengthen, and prove a sort of converse of an important result of Kleiman and Thorup about integral dependence of modules.
\end{abstract}
\subjclass[2010]{13A30, 13B21, 13B22, (14A15)}
\keywords{Rees algebra of a module, associated points, integral closure of modules, Bertini's theorem for extreme morphisms}
\address{Department of Mathematics\\
  Northeastern University\\
  Boston, MA 02215\\
  Institute of Mathematics \\
and Informatics, Bulgarian Academy of Sciences\\
Akad. G. Bonchev, Sofia 1113, Bulgaria}
\maketitle
\tableofcontents
\section{Introduction} Let $X:=\mathrm{Spec}(R)$ be an affine Noetherian scheme and let $\mathcal{M}$ be a finitely generated $R$-module. Following Eisenbud, Huneke and Ulrich \cite{Eisenbud}, define the {\it Rees algebra} of $\mathcal{M}$ as the quotient $$\mathcal{R}(\mathcal{M}):=\mathrm{Sym}(\mathcal{M})/(\cap \mathcal{L}_g)$$
where the intersection is taken over all homomorphisms $g$ from $\mathcal{M}$ to a free $R$-module $\mathcal{F}_g$ and $\mathcal{L}_g$ denotes the kernel of the induced map $\mathrm{Sym}(\mathcal{M}) \rightarrow \mathrm{Sym}(\mathcal{F}_g)$. In fact as shown in Prp.\ 1.3 \cite{Eisenbud} $\mathcal{R}(\mathcal{M})$ can be computed from any homomorphism of $\mathcal{M}$ to a free $R$-module whose dual is surjective.

Let $\mathcal{M} \subset \mathcal{N}$ be a pair of finitely generated $R$-modules. Assume that $\mathcal{M}$ and $\mathcal{N}$ are either free at the generic point of each irreducible component of $X$, or they are contained in a free $R$-module. The inclusion of $\mathcal{M}$ into $\mathcal{N}$ induces a map from $\mathcal{R}(\mathcal{M})$ to $\mathcal{R}(\mathcal{N})$.
Denote by $\mathcal{N}^{n}$ the $n$th homogeneous  component of $\mathcal{R}(\mathcal{N})$ and by $\mathcal{M}^{n}$ the image of the $n$th homogeneous component of $\mathcal{R}(\mathcal{M})$ in $\mathcal{R}(\mathcal{N})$. Finally, recall that the {\it integral closure} $\overline{\mathcal{M}^{n}}$ of $\mathcal{M}^{n}$ in $\mathcal{N}^{n}$ is the module generated by those elements from $\mathcal{N}^{n}$ that satisfy an equation in $\mathcal{R}(\mathcal{N})$ of integral dependence over $\mathcal{R}(\mathcal{M})$.

Our main result, Thm.\ \ref{conormal}, is as follows. Assume $X$ is local and universally catenary with closed point $x_0$. Assume the codimension of each irreducible component of the fiber of  $\mathrm{Proj}(\mathcal{R}(\mathcal{M}))$ over $x_0$ is at least $2$. Let $h$ be a general element of the maximal ideal of $\mathcal{O}_{X,x_0}$. Then $h \notin \mathrm{z.div}(\mathcal{N}^n/\overline{\mathcal{M}^n})$ for each $n$. Furthermore, we describe explicitly the genericity conditions on $h$. In Scts.\ 5-7 we apply Thm. \ref{conormal} to two seemingly unrelated problems from commutative algebra and algebraic geometry.

First we analyze the set $\bigcup_{n=1}^{\infty}\mathrm{Ass}_{X}(\mathcal{N}^{n}/\overline{\mathcal{M}^{n}})$. It is a hard problem to show that this set is finite, because the modules $\overline{\mathcal{M}^{n}}$ may not form a finitely generated $R$-algebra. Thm.\ \ref{conormal} yields a complete classification of the points $x$ of $X$  that appear in this set when $X$ is universally catenary. They are of two types: generic points of codimension one components of the nonfree locus of $\mathcal{N}/\mathcal{M}$, and generic points of the irreducible components of closed subsets of $X$ where the fiber dimension of the structure morphism $\mathrm{Proj}(\mathcal{R}(\mathcal{M})) \rightarrow X$ jumps. Then Chevalley's constructibility result implies that there are finitely many such $x$ and hence $\bigcup_{n=1}^{\infty}\mathrm{Ass}_{X}(\mathcal{N}^{n}/\overline{\mathcal{M}^{n}})$ is a finite set. 

More generally, without assuming $X$ to be universally catenary and any additional hypothesis on the pair $\mathcal{M} \subset \mathcal{N}$, we prove algebraically that $\mathrm{Ass}_{X}(\mathcal{N}^{n}/\overline{\mathcal{M}^{n}}) \subseteq \mathrm{Ass}_{X}(\mathcal{N}^{n+1}/\overline{\mathcal{M}^{n+1}})$ for each $n$, and that the sets $\mathrm{Ass}_{X}(\mathcal{N}^{n}/\overline{\mathcal{M}^{n}})$ and $\mathrm{Ass}_{X}(\mathcal{N}^{n}/\mathcal{M}^{n})$  are asymptotically stable as special cases of  more general results about finitely generated $R$-algebras. Working in complete generality though, doesn't yield a satisfactory geometric classification of the points appearing in $\mathrm{Ass}_{X}(\mathcal{N}^{n}/\overline{\mathcal{M}^{n}})$ as in the case when $X$ is universally catenary.

The analogous problems for ideals $\mathcal{I}$ were posed by Rees \cite{Rees56}. First, Ratliff \cite{Ratliff} proved that $\bigcup_{i=1}^{\infty}\mathrm{Ass}_{X}(R/\mathcal{I}^{n})$ is finite. Then Brodmann \cite{Brodmann} inspired by Ratliff's work, showed that $\mathrm{Ass}_{X}(R/\mathcal{I}^{n})$ is asymptotically stable. 
If $\mathrm{ht}(\mathcal{I}) \geq 1$, it follows from Ratliff's Thm.\ 2.5 \cite{Ratliff} and McAdam and Eakin's Prp.\ 7 \cite{Eakin} that $\mathrm{Ass}_{X}(R/\overline{\mathcal{I}^{n}})$ is asymptotically stable. Then Rees \cite{Rees81} observed that 
$\bigcup_{i=1}^{\infty}\mathrm{Ass}_{X}(R/\overline{\mathcal{I}^{n}})$ is finite in complete generality as a consequence of his valuation theorem. Finally, Ratliff \cite{Ratliff84} proved that $\mathrm{Ass}_{X}(R/\overline{\mathcal{I}^{n}}) \subseteq \mathrm{Ass}_{X}(R/\overline{\mathcal{I}^{n+1}})$ for each $n$ without the assumption $\mathrm{ht}(\mathcal{I}) \geq 1$. These results were extended by Katz and Naude \cite{Katz1} to the case where $\mathcal{M}$ is contained in a free $R$-module $\mathcal{F}$ by reduction to the ideal case.

Questions about the asymptotic behavior of prime divisors arise naturally in various problems from commutative algebra and algebraic geometry, including going-down of prime ideals, catenary chain conjectures (see \cite{McAdam2} and \cite{Ratliff83}), symbolic powers of ideals, Swanson's result on the asymptotics of the primary decomposition of the powers of an ideal \cite{Swanson} and its applications to symbolic powers (cf. \cite{ELS}). More recently, the author \cite{Rangachev} has found applications to a specialization problem for certain local cohomology modules defined over flat families, and to characterizing the vanishing of certain restricted local volumes that are invariants of deformations. 

Assume $X$ is local with closed point $x_0$. McAdam \cite{McAdam} showed that if $X$ is formally equidimensional and  $x_0 \in \mathrm{Ass}_{X}(R/\overline{\mathcal{I}^n})$ for some $n$, then the fiber of $\mathrm{Proj}(\mathcal{R}(\mathcal{I}))$ over $x_0$ is of codimension at most $1$. This result was generalized by Katz and Rice \cite{Katz2} using valuation theory to the case where $\mathcal{M}$ is contained in a free module $\mathcal{F}$ such that $\mathcal{M}$ has the same rank as $\mathcal{F}$ at the generic point of each irreducible component of $X$. As an immediate consequence of our main result we obtain the following far reaching generalization of these results: If $x_0 \in \mathrm{Ass}_{X}(\mathcal{N}^n/\overline{\mathcal{M}^n})$ for some $n$, then the irreducible components of the fiber of $\mathrm{Proj}(\mathcal{R}(\mathcal{M}))$ over $x_0$ are of codimension at most $1$.


In Thm.\ \ref{converse conormal} we prove a general converse to our main result. Suppose $X$ is local with closed point $x_0$. Say that a module $\mathcal{N}$ has {\it deficient analytic spread} if each irreducible component of the fiber of $\mathrm{Proj}(\mathcal{R}(\mathcal{N}))$ over $x_0$ is of codimension at least $2$. For example, if $\mathcal{N}$ is free and $\dim X \geq 2$, then $\mathcal{N}$ has deficient analytic spread. As a corollary of Thm.\ \ref{converse conormal} we derive the following converse of Thm.\ \ref{conormal}: assume $\mathcal{N}$ has deficient analytic spread and  $x_0 \not \in \mathrm{Ass}_{X}(\mathcal{N}^n/\overline{\mathcal{M}^n})$ for all large enough $n$, then $\mathcal{M}$ has deficient analytic spread. As another corollary to Thm.\ \ref{converse conormal} we prove in Thm.\ \ref{compl. desc.} a result which yields a  classification of the points of $\mathrm{Ass}_{X}(\mathcal{N}^n/\overline{\mathcal{M}^n})$ for arbitrary affine Noetherian scheme $X$. This result generalizes and strengthens results due to Burch \cite{Burch} for ideals, and Rees \cite{Rees}, Katz and Rice (see Thm.\ 3.5.1 in \cite{Katz2}) who treated the case where $\mathcal{N}$ is free and the rank of $\mathcal{M}$ at the generic point of each irreducible component of $X$ is equal to the rank of $\mathcal{N}$.  

The Rees algebra $\mathcal{R}(\mathcal{M})$ was essentially introduced by Zariski in App.\ IV to the second
volume of his book with Samuel \cite{ZS}. It was studied in greater generality by Rees in \cite{Rees}. The Rees algebra of a module is a generalization of the blowup algebra of $X$ with center an ideal $\mathcal{I} \subset R$. Considerable work on Rees algebras (see \cite {Rees2}, \cite{RK} and \cite{KT-Al}) was stimulated by the work of Gaffney (\cite{Gaf1} and \cite{Gaf2}) who needed a generalization of the blowup algebra for applications to equisingularity theory. In the geometric context $\mathcal{M}$ usually comes equipped with an embedding in a free module $\mathcal{F}$. Then $\mathcal{R}(\mathcal{M})$ is defined as the subalgebra of $\mathrm{Sym}(\mathcal{F})$ generated by the elements of $\mathcal{M}$. Although this definition is satisfactory in many situations, it has the disadvantage that it is neither functorial nor intrinsic. This issue was resolved in \cite{Eisenbud}. Furthermore, Eisenbud, Huneke and Ulrich show that their definition can be used to address questions related to integral dependence of modules. 

As another application of our main result, we recover, strengthen, and prove a sort of converse to an important result due to Kleiman and Thorup about integral dependence of modules. Assuming that $X$ is equidimensional and universally catenary, they show (see \cite{KT-Al} for the original proof and \cite{Thorup} for a much shorter and more elementary proof) that the  inverse image in $\mathrm{Proj}(\mathcal{R}(\mathcal{M}))$ of the locus in $X$ where $\mathcal{N}$ is not integral over $\mathcal{M}$ is of codimension one provided that the two modules are equal generically. Their result has found numerous applications to equisingularity theory (c.f. \cite{GK}, \cite{Gaf3}, and \cite{GR}). It is used as a crucial step in establishing that if a given equisingularity condition holds generically, then it holds everywhere. 

The original proof of Kleiman and Thorup involves an analysis of an intermediate object: the exceptional divisor of the blowup of $\mathrm{Proj}(\mathcal{R}(\mathcal{N}))$ with center the ideal $\mathcal{M}\mathcal{R}(\mathcal{N})$. In our treatment, we work directly on $\mathrm{Proj}(\mathcal{R}(\mathcal{M}))$. In this sense, our approach is similar in spirit to that of Simis, Ulrich and Vasconcelos \cite{SUV}, who analyze the integrality of $\mathcal{R}(\mathcal{N})$ over $\mathcal{R}(\mathcal{M})$ by reducing it to a local problem at codimension one primes of $\mathcal{R}(\mathcal{M})$. However, in contrast to \cite{SUV}, \cite{KT-Al} and \cite{KT}, our approach allows us to work in a more general setting: we don't require $X$ to be equidimensional, and we allow the ranks of $\mathcal{M}$ and $\mathcal{N}$ to vary across the generic points of the irreducible components of $X$. Finally, we strengthen the Kleiman--Thorup theorem by showing that the inverse image in $\mathrm{Proj}(\mathcal{R}(\mathcal{M}))$ of each irreducible component of the locus in $X$ where $\mathcal{N}$ is not integral over $\mathcal{M}$ is of codimension one in an irreducible component of $\mathrm{Proj}(\mathcal{R}(\mathcal{M}))$.

Many of the results in this paper are proved in greater generality: instead of working with Rees algebras of modules, we consider two finitely generated graded $R$-algebras $\mathcal{A} \subset \mathcal{B}$. In Rmk.\ \ref{general main} that follows the proof of our main result, we outline how one can derive Thm.\ \ref{conormal} in that setting at the expense of losing the explicit description of the genericity conditions on $h$. 


Here is a more technical overview of the contents. Assume $X$ is local with closed point $x_0$. Set $$\mathcal{R}^{\dagger}(\mathcal{M}) := R \oplus \overline{\mathcal{M}} \oplus \overline{\mathcal{M}^{2}} \oplus \cdots.$$ The problem of determining whether $x_0$ appears in the set $\bigcup_{n=1}^{\infty}\mathrm{Ass}_{X}(\mathcal{N}^{n}/\overline{\mathcal{M}^{n}})$ is closely related to that of
analyzing the kernel of the homomorphism
$$\phi_h \colon \mathcal{R}^{\dagger}(\mathcal{M})/h\mathcal{R}^{\dagger}(\mathcal{M}) \longrightarrow \mathcal{R}(\mathcal{N})/h\mathcal{R}(\mathcal{N})$$
where $h$ is a general element from the maximal ideal $\mathfrak{m}_{x_0}$. Denote the subscheme of zeroes of $h$ in $X$  by $H$.

In Sct.\ \ref{Supp and Fitt}, we prove that, for a generic choice of $h$, the components of the support of $\Ker \phi_h$ are properly contained in the components of $H$ using Fitting ideals and prime avoidance.

In Sct.\ \ref{Bertini} we prove a Bertini-type theorem. Call a morphism {\it extreme} if it maps every irreducible component of the source to a closed point or to an irreducible component of the target. Let $f \colon P \rightarrow X$ be a proper extreme morphism between a scheme $P$ and universally catenary local scheme $X$ with closed point $x_0$. We prove that $f$ is extreme on $f^{-1}H$ for a generic choice of $h \in \mathfrak{m}_{x_0}$. The genericity condition is that $H$ has to cut properly the irreducible components of the closed sets where the fiber dimension of $f$ jumps. That there are finitely many such sets follows from Chevalley's constructibility result. Recently, Gaffney and the author \cite{GRB} found applications of this Bertini-type theorem to equisingularity problems. 

In Sct.\ \ref{main results} we prove Thm.\ \ref{conormal}, the main result of our paper. Suppose $X$ is a local, universally catenary Noetherian scheme with closed point $x_0$ and that the dimension of each component of the fiber of $\mathrm{Proj}(\mathcal{R}(\mathcal{M}))$ over $x_0$ is of codimension at least $2$. Then $\Ker \phi_h$ vanishes for a generic choice of $h$ as specified in sections \ref{Supp and Fitt} and \ref{Bertini}. 

To prove Thm.\ \ref{conormal}, we need two technical lemmas. First Lem.\ \ref{key lemma} allows us to replace $R$, $\mathcal{M}^n$ and $\mathcal{N}^n$ with their respective completions along the maximal ideal $\mathfrak{m}_{x_0}$, because the associated points of $\mathcal{N}^{n}/\overline{\mathcal{M}^{n}}$ and the geometric hypothesis on $\mathrm{Proj}(\mathcal{R}(\mathcal{M}))$ behave well with passage to the completion. Because $R$ is complete local ring,  $R$ is Nagata (also known as universally Japanese or pseudo-geometric). But $\mathcal{R}(\mathcal{M})$ is of finite type over $R$. So $\mathcal{R}(\mathcal{M})$ is Nagata too. Then using Lem.\ \ref{reduction}, we pass to reduced structures. Because $\mathcal{R}(\mathcal{N})$ is reduced, $\mathcal{R}^{\dagger}(\mathcal{M})$ is then finite over $\mathcal{R}(\mathcal{M})$.

Next, we apply  Bertini's theorem for extreme morphisms to
$P:=\mathrm{Proj}(\mathcal{R}^{\dagger}(\mathcal{M}))$ and the structure morphism $f \colon P \rightarrow X$. The genericity conditions on $H$ ensure that the irreducible components of $f^{-1}H$ surject either onto $x_0$ or onto the irreducible components of $H$. The codimension hypothesis on the irreducible components of the fiber of $\mathrm{Proj}(\mathcal{R}(\mathcal{M}))$ over $x_0$ transfer to codimension hypothesis on the irreducible components of the fiber of $P$ over $x_0$ because $P$ has finite fibers over $\mathrm{Proj}(\mathcal{R}(\mathcal{M}))$. But the codimension hypothesis on irreducible components of the fiber of $P$ over $x_0$ guarantee that $f^{-1}H$ does not have irreducible components mapping to $x_0$. In other words, each irreducible component of $f^{-1}H$ surjects onto an irreducible component of $H$. As shown in Sct.\ \ref{Supp and Fitt}, our choice of $h$ guarantees that the components of the support of $\Ker \phi_h$, viewed as an $\mathcal{O}_{X,x_0}$-module, are properly contained in the irreducible components of $H$. But $\Ker \phi_h$ is also an ideal in $\mathcal{R}^{\dagger}(\mathcal{M})/h\mathcal{R}^{\dagger}(\mathcal{M})$. Thus each associated point of $\Ker \phi_h$ is the generic point of an embedded component of $f^{-1}H$. However, $\mathcal{R}^{\dagger}(\mathcal{M})$ is integrally closed in $\mathcal{R}(\mathcal{N})$. Using the Determinatal Trick Lemma, we show that  $f^{-1}H$ does not have embedded components. Hence $\Ker \phi_h$ vanishes. In particular, $x_0 \not \in \mathrm{Ass}_{X}(\mathcal{N}^{n}/\overline{\mathcal{M}^{n}})$ for all $n$.

We apply Thm.\ \ref{conormal} in a number of situations. First, in Sct.\ \ref{GKT} we strengthen and generalize the result of Kleiman and Thorup about the integral dependence of the modules $\mathcal{M} \subset \mathcal{N}$. Second, in Sct.\ \ref{finiteness and asymptotic stability} we apply Thm.\ \ref{conormal} and Chevalley's constructibility result  to obtain in Thm.\ \ref{ass points-int.cl.} an explicit geometric classification of the points in $\bigcup_{n=1}^{\infty}\mathrm{Ass}_{X}(\mathcal{N}^{n}/\overline{\mathcal{M}^{n}})$ when $X$ is universally catenary.

We prove a number of general results about finitely generated $R$-algebras that allow us to show that  $\mathrm{Ass}_{X}(\mathcal{N}^{n}/\mathcal{M}^{n})$ and $\mathrm{Ass}_{X}(\mathcal{N}^{n}/\overline{\mathcal{M}^{n}})$ are asymptotically stable. Notably, we derive these results from their more general counterparts about standard graded algebras. Prp.\ \ref{finite ass pnts} and Prp.\ \ref{asymptotics algebras}, which are due to Katz and Puthenpurakal \cite{Katz3} and Hayasaka \cite{Hayasaka}, respectively, show that if $\mathcal{A} \subset \mathcal{B}$ are standard graded $R$-algebras, then $\bigcup_{n=1}^{\infty}\mathrm{Ass}_{X}(\mathcal{B}_n/\mathcal{A}_n)$ is finite and the sequence $\mathrm{Ass}_{X}(\mathcal{B}_n/\mathcal{A}_n)$ is asymptotically stable. We give our own self-contained proofs of these results. Under mild assumptions on $\mathcal{B}$ we show in Thm.\ \ref{int. asym.} that 
$\mathrm{Ass}_{X}(\mathcal{B}_n/\overline{\mathcal{A}_n}) \subseteq \mathrm{Ass}_{X}(\mathcal{B}_{n+1}/\overline{\mathcal{A}_{n+1}})$ for each $n$, which generalizes results of Ratliff (see Thm.\ 2.4 in \cite{Ratliff84}) in the ideal case and Katz and Naude (see Thm.\ 2.1 in \cite{Katz1}) in the case when $\mathcal{B}$ is a standard graded polynomial ring. We also show that  $\bigcup_{i=1}^{\infty}\mathrm{Ass}_{X}(\mathcal{B}_n/\overline{\mathcal{A}_n})$ is finite by reducing to the case of an ideal following Katz and Naude \cite{Katz1}.

In Sct.\ \ref{converse results}, we prove Thm.\ \ref{converse conormal}, which is a fairly general converse  to Thm.\ \ref{conormal}. We introduce the notion of deficient analytic spread for modules and algebras and compared it with the more classical notion of nonmaximal analytic spread in Rmk.\ \ref{def. vs. anal.}. 
We apply these results and the result about the asymptotic stability of $\mathrm{Ass}_{X}(\mathcal{N}^{n}/\overline{\mathcal{M}^{n}})$ in Cor.\ \ref{converse to KT} to derive a sort of converse to the Kleiman--Thorup result.

Finally, assume that $\mathcal{N}$ has deficient analytic spread at each $x \in X$ with $\dim X,x \geq 2$. Our Thm.\ \ref{compl. desc.}, coupled with Cor.\ \ref{McAdam}, provides a necessary and sufficient condition for a point $x$ from $X$ to be in $\mathrm{Ass}_{X}(\mathcal{N}^{n}/\overline{\mathcal{M}^{n}})$. This generalizes  results by Burch \cite{Burch} for ideals, and Rees \cite{Rees}, Katz and Rice  \cite{Katz2} for the special case where $\mathcal{N}:=\mathcal{F}$ is free and the rank of $\mathcal{M}$ at the generic point of each irreducible component of $X$ is equal to  $\mathrm{rk}(\mathcal{\mathcal{F}})$. We finish by showing that if $\mathcal{M}$ is contained in a free module $\mathcal{F}$ such that $\mathcal{M}$ has rank $e$ at the generic point of each irreducible component of $X$ with  $\mathrm{rk}(\mathcal{F})>e$, and $\mathcal{M}$ has deficient analytic spread, then the set $\mathrm{Ass}_{X}(\mathcal{F}^{n}/\overline{\mathcal{M}^{n}})$ consists entirely of the generic points of the irreducible components of $X$ for each $n$. This situation occurs, for example, if $\mathcal{M}$ is the Jacobian module of a class of cone singularities with deficient conormal varieties.

{\bf  Acknowledgements.} I would like to thank my advisors Terence Gaffney and Steven Kleiman for many helpful discussions. I would like to express my gratitude to Steven Kleiman for reading carefully earlier drafts of this paper and supplying me with numerous invaluable comments and suggestions for improving its content and exposition. I would like to thank the anonymous referee for providing me with plenty of suggestions for improving the exposition of the paper as well as bringing to my attention the work of \cite{Katz3}. Much of this work was done during my visit at the trimester program in algebraic geometry at IMPA in 2015. I would like to thank Carolina Araujo, Ana-Maria Castravet and Eduardo Esteves, for organizing the program, for their hospitality, and making it possible for me to attend. I was supported by NSF conference grant DMS-1502154 ``Second Latin American School of Algebraic Geometry and Applications'', NEU College of Science travel grant and a research summer fellowship awarded by the Department of Mathematics at NEU.

\section{Definitions and Conventions}

For convenience of the reader we provide in this section some definitions and conventions used throughout the paper. Most of the results in sections $4, 5$ and $6$ are stated in geometric language. The main motivation for doing so is that the proof and statement of one of our crucial results Thm.\ \ref{vertical} can be easily stated and understood when phrased in geometric terms. Below we provide a simple dictionary between some basic geometric concepts used in the paper and their algebraic counterparts. 

Suppose $X=\mathrm{Spec}(R)$ where $R$ is commutative Noetherian ring with identity. Let $x$ be a point in $X$. We say $x$ is an {\it associated point} of $X$ if the ideal $\mathfrak{m}_x$ of $x$ is an associated prime in $R$. We say $x$ is an {\it embedded point} if $\mathfrak{m}_x$ is not a minimal associated prime of $R$. We say $x$ is an {\it associated point} of an $R$-module $\mathcal{L}$ if $\mathfrak{m}_x$ is an associated prime of $\mathcal{L}$. We denote by $\mathrm{Ass}_{X}(\mathcal{L})$ the set of all associated points of $\mathcal{L}$.

We say $X$ is {\it universally catenary} if $R$ is universally catenary (see Dfn.\ B.3.1 in \cite{Huneke}). We say $X$ is {\it Nagata} if $R$ is a  Nagata ring, i.e.\ if for every prime ideal $P$ in $R$ the integral closure of $R/P$ in any finite field extension of $\kappa(P)$ is module-finite over $R/P$ (see Ex.\ 9.6 in \cite{Huneke}).

We often work with a proper morphism $f\colon P \rightarrow X$ where $P$ and $X$ are schemes with $X$ Noetherian. In our applications $X:=\mathrm{Spec}(R)$, where $R$ is Noetherian, and $P:=\mathrm{Proj}(\mathcal{B})$ where $\mathcal{B}$ is a standard graded $R$-algebra. Evaluation of $f$ at points of $P$ corresponds to contraction of primes of $\mathcal{B}$ to $R$. The irreducible components of $P$ correspond to the minimal primes of $\mathcal{B}$. In the first sections of our paper $\mathcal{B}$ will usually take the form of a Rees algebra of a finitely generated $R$-module or an integral closure of it. 

If $V$ is a closed subscheme of $P$ and $V_i$ is an irreducible component of $V$ contained in an irreducible component $P_j$ of $P$, then we set $\mathrm{codim}(V_i,P_j)$ to be the Krull dimension of $\mathcal{O}_{P_j, \eta_{V_i}}$ where $\eta_{V_i}$ is the generic point of $V_i$. Alternatively, $\mathrm{codim}(V_i,P_j)$ is the height of the ideal of $V_i$ in the domain $\mathcal{B}/\mathcal{I}_{P_j}\mathcal{B}$ where $\mathcal{I}_{P_j}$ is the ideal 
of $P_j$ in $\mathcal{B}$. We define $\codim V_i$ as the Krull dimension of $\mathcal{O}_{P,\eta_{V_i}}$, or equivalently as the height of the ideal of $V_i$ in $\mathcal{B}$.

We say $X$ is {\it local with closed point} $x_0$ if $R$ is local with maximal ideal $\mathfrak{m}_{x_0}$. From now on till the end of the section suppose this is the case. 

Let $h$ be an element from $\mathfrak{m}_{x_0}$. We say that $H:=\mathbb{V}(H)$ {\it cuts properly} the irreducible components of a closed subset $X'$ of $X$ if $h$ avoids the minimal primes of the ideal of $X'$ in $R$. 

Let $V$ be a closed subset of $P$. Call $f$ {\it extreme} on $V$ if $f$ maps each irreducible component of $V$ either onto $x_0$ or onto an irreducible component of $f(V)$. 

We say that $\mathcal{B}$ has {\it deficient analytic spread} if the codimension (height) of each minimal prime ideal of $\mathfrak{m}_{x_0}\mathcal{B}$ is at 
least $2$. In Rmk.\ \ref{def. vs. anal.} we compare this notion with the classical notion of non-maximal analytic spread.

\section{Support and Fitting ideals}\label{Supp and Fitt}
Let $X$ be a local Noetherian scheme with closed point $x_0$. Denote the irreducible components of $X$ by $X_1, \ldots, X_r$.  Denote by $\eta_i$ the generic point of $X_i$. Suppose $\mathcal{M} \subset \mathcal{N}$ are finitely generated $\mathcal{O}_{X,x_0}$-modules which are free at $\eta_i$ of ranks $e_i$ and $p_i$ for each $i=1, \ldots, r$. Let $S$ be the union of the nonfree loci of $\mathcal{M}$ and $\mathcal{N}$ and $Z$ be the nonfree locus of $\mathcal{N}/\mathcal{M}$. For $h \in \mathcal{O}_{X,x_0}$ denote by $H$ the subscheme of zeroes of $h$ in $X$.

Let $\mathcal{N}^{n}$ be the $n$th homogeneous component of $\mathcal{R}(\mathcal{N})$  and let $\mathcal{M}^{n}$ be the image in $\mathcal{R}(\mathcal{N})$ of the $n$th homogeneous component of $\mathcal{R}(\mathcal{M})$.

\begin{proposition}\label{support} Assume $x_0$ is not an associated point of $X$ and $\dim X_i \geq 2$ for some $i$. There exists $h \in\mathcal{O}_{X,x_0}$ such that $H$ does not contain any associated point of $X$ and the irreducible components of $H$ are not contained in those of $Z$ and $S$. Furthermore, for each such $h$ the components of
\begin{equation}\label{powers support}
\mathrm{Supp}_{X}((h\mathcal{N}^{n} \cap \mathcal{M}^{n})/h\mathcal{M}^{n}) 
\end{equation}
are properly contained in the irreducible components of $H$ for any $n$. 

Moreover, if $X$ is reduced, we can replace $\mathcal{M}^{n}$ in $\rm{(\ref{powers support})}$ by any submodule of $\mathcal{N}^{n}$ that agrees with  $\mathcal{M}^{n}$ off $S$.
Finally, if $X$ is normal, then $\rm{(\ref{powers support})}$ can be strengthen to
\begin{equation}\label{supp eq.}
\mathrm{Supp}_{X}((h\mathcal{N}^{n} \cap \mathcal{M}^{n})/h\mathcal{M}^{n}) \subset H \cap S.
\end{equation}
\end{proposition}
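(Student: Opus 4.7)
My plan is to produce $h$ by prime avoidance and then verify the support conclusion by localizing at generic points of the components of $H$. For prime avoidance I take the finite list consisting of the associated primes of $R$ together with the generic primes of the positive-dimensional irreducible components of $Z$ and of $S$. None of these primes coincides with $\mathfrak{m}_{x_0}$: the associated primes by the hypothesis that $x_0$ is not associated, and the remaining primes because they correspond to positive-dimensional closed subsets properly contained in $X$. The hypothesis $\dim X_i \geq 2$ for some $i$ guarantees $\mathfrak{m}_{x_0}$ has height at least two, so prime avoidance yields the desired $h \in \mathfrak{m}_{x_0}$. Zero-dimensional components of $Z$ and $S$ are equal to $\{x_0\}$ and need not be avoided, since every minimal prime $\mathfrak{p}$ over $(h)$ has height one by Krull's principal ideal theorem, so each component of $H$ has positive dimension and cannot sit inside $\{x_0\}$. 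Since $h$ lies in none of the avoided primes, no minimal prime over $(h)$ can contain any avoided prime, which shows the components of $H$ are not contained in those of $Z$ or $S$; at the same time, $h$ is a nonzerodivisor, so $H$ avoids the associated points of $X$.

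Now set $\mathcal{K}_n := (h\mathcal{N}^n \cap \mathcal{M}^n)/h\mathcal{M}^n$. Since $h$ annihilates $\mathcal{K}_n$, $\mathrm{Supp}_X(\mathcal{K}_n) \subseteq H$; it remains to show $(\mathcal{K}_n)_\eta = 0$ at the generic point $\eta$ of each component of $H$. By the choice of $h$, $\eta \notin Z \cup S$, so $\mathcal{M}_\eta$, $\mathcal{N}_\eta$ and $(\mathcal{N}/\mathcal{M})_\eta$ are all free over $\mathcal{O}_{X,\eta}$. Projectivity of $(\mathcal{N}/\mathcal{M})_\eta$ splits the sequence $0 \to \mathcal{M}_\eta \to \mathcal{N}_\eta \to (\mathcal{N}/\mathcal{M})_\eta \to 0$, giving $\mathcal{N}_\eta = \mathcal{M}_\eta \oplus K$. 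Because the Rees algebra of a free module coincides with its symmetric algebra, $\mathcal{M}^n_\eta = \mathrm{Sym}^n(\mathcal{M}_\eta)$ and $\mathcal{N}^n_\eta = \mathrm{Sym}^n(\mathcal{N}_\eta)$, and functoriality of $\mathrm{Sym}^n$ realizes $\mathcal{M}^n_\eta$ as a direct summand of $\mathcal{N}^n_\eta$. A short direct summand computation then gives $h\mathcal{N}^n_\eta \cap \mathcal{M}^n_\eta = h\mathcal{M}^n_\eta$ and hence $(\mathcal{K}_n)_\eta = 0$. The reduced case is immediate from the same argument: any submodule $\mathcal{M}' \subseteq \mathcal{N}^n$ agreeing with $\mathcal{M}^n$ off $S$ has $\mathcal{M}'_\eta = \mathcal{M}^n_\eta$ at each generic point $\eta$ of $H$, since those points lie outside $S$, so the identical computation applies with $\mathcal{M}'$ in place of $\mathcal{M}^n$.

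For the normal case I want to strengthen the conclusion to $\mathrm{Supp}_X(\mathcal{K}_n) \subseteq H \cap S$, i.e.\ $(\mathcal{K}_n)_\eta = 0$ at every $\eta \in H \setminus S$. Any codimension-one point of $H$ is a generic point of a component of $H$ and is already handled; the remaining cases have $\eta$ of codimension at least two. At such $\eta$, $\mathcal{O}_{X,\eta}$ is a normal local domain and $\mathcal{M}_\eta, \mathcal{N}_\eta$ are free. My plan is to check that $h$ is a nonzerodivisor on $\mathcal{N}^n_\eta/\mathcal{M}^n_\eta$ at every codimension-one prime $\mathfrak{q}$ of $\mathcal{O}_{X,\eta}$: the localization at such $\mathfrak{q}$ is a DVR, Smith normal form for $\mathcal{M}_\mathfrak{q} \subseteq \mathcal{N}_\mathfrak{q}$ pins down the torsion of $\mathcal{N}^n/\mathcal{M}^n$ at $\mathfrak{q}$, and this torsion is nonzero only when $\mathfrak{q} \in Z$; but any such $\mathfrak{q}$ containing $h$ is forced to be a generic point of a component of $H$ through $\eta$, and by the choice of $h$ such a component cannot lie in $Z$. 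The main obstacle is extending this codimension-one nonzerodivisor property to a nonzerodivisor property at $\eta$ itself: I expect this to follow from the $S_2$ property of the normal ring $\mathcal{O}_{X,\eta}$ coupled with reflexivity of the free module $\mathcal{N}^n$, which together should exclude embedded primes of $\mathcal{N}^n/\mathcal{M}^n$ of codimension at least two.
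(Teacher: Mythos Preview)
Your construction of $h$ by prime avoidance and the verification of the main support claim at the generic points of the components of $H$ are correct and match the paper's argument: both localize at a point $\eta \notin S \cup Z$, split $\mathcal{M}_\eta \hookrightarrow \mathcal{N}_\eta$ using freeness of $(\mathcal{N}/\mathcal{M})_\eta$, and pass to symmetric powers to realize $\mathcal{M}^n_\eta$ as a direct summand of $\mathcal{N}^n_\eta$. The reduced refinement is handled the same way in both.

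The gap is in the normal case. Your hoped-for mechanism---that the $S_2$ property together with reflexivity of the free module $\mathcal{N}^n_\eta$ should exclude embedded associated primes of $\mathcal{N}^n_\eta/\mathcal{M}^n_\eta$ of codimension at least two---is not a valid general principle: a quotient of two free modules over a normal domain can have embedded primes of arbitrary height, and neither reflexivity of the numerator nor $S_2$ of the base prevents this. The paper argues instead via Cramer's rule and Fitting ideals. Writing $hf = \sum x_j m_j$ in a basis of $\mathcal{M}^n_\eta$ and applying Cramer's rule to the resulting linear system in $\mathcal{N}^n_\eta$ gives $m \cdot x_j \in (h)$ for every maximal minor $m$ of the inclusion matrix; these minors generate $\mathrm{Fitt}_{p_i(n)-e_i(n)}(\mathcal{N}^n_\eta/\mathcal{M}^n_\eta)$, whose radical coincides with that of $\mathrm{Fitt}_{p_i-e_i}(\mathcal{N}_\eta/\mathcal{M}_\eta)$ (this is checked fiberwise over residue fields), and the latter cuts out $Z$. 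Normality enters only to guarantee $\mathcal{O}_{X,\eta}/(h)$ is $S_1$, so the associated primes of $H$ are its generic points, which lie outside $Z$ by the choice of $h$; then prime avoidance produces $w_n$ in the Fitting ideal that is a nonzerodivisor modulo $h$, and $w_n x_j \in (h)$ forces $x_j \in (h)$, whence $hf \in h\mathcal{M}^n_\eta$. If you prefer an $S_2$-style argument, apply it to the right module: $\mathcal{K}_n = (h\mathcal{N}^n \cap \mathcal{M}^n)/h\mathcal{M}^n$ embeds in $\mathcal{M}^n_\eta/h\mathcal{M}^n_\eta$, which for $\eta \notin S$ is free over the $S_1$ ring $\mathcal{O}_{X,\eta}/(h)$, so its associated primes are minimal over $(h)$ and you have already shown $\mathcal{K}_n$ vanishes there.
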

\begin{proof} Let $\eta$ be the generic point of an irreducible component of $X$. To simplify notation assume $\mathcal{M}_{\eta}$ and $\mathcal{N}_{\eta}$ are free of ranks $e$ and $p$ respectively. First, we claim that $Z$ does not contain $\eta$. To verify this it is enough to show that $\mathcal{N}/\mathcal{M}\otimes k(\eta)$ is a vector space of dimension $p-e$. Equivalently, we want to show that $\mathcal{M}\otimes k(\eta)$ injects into $\mathcal{N}\otimes k(\eta)$. Therefore, it is enough to show that $\mathfrak{m}_{\eta}\mathcal{N}_{\eta} \cap \mathcal{M}_{\eta}=\mathfrak{m}_{\eta}\mathcal{M}_{\eta}$ where $\mathfrak{m}_{\eta}$ is the maximal ideal
of $\mathcal{O}_{X,\eta}$.

Let $f_1, \ldots, f_p$ be the generators of $\mathcal{N}_{\eta}$. An element $f$ from $\mathfrak{m}_{\eta}\mathcal{N}_{\eta}$ can be written as

\begin{equation}\label{free eq.1}
f = h_1f_1+ \cdots + h_pf_p
\end{equation}
where $h_i$ are elements from $\mathfrak{m}_{\eta}$.

Suppose $f$ belongs to $\mathcal{M}_{\eta}$.  Then we can write

\begin{equation}\label{free eq.2}
f = x_1m_1+ \cdots + x_em_e
\end{equation}
where $x_i$ are elements from $\mathcal{O}_{X,\eta}$ and $m_i$ are the generators of $\mathcal{M}_{\eta}$.

Because $\mathcal{O}_{X,\eta}$ is Artinian there exists a positive integer $q$ such that $\mathfrak{m}_{\eta}^{q} \neq 0$ and
$\mathfrak{m}_{\eta}^{q+1}=0$. Therefore, we can pick $\mathfrak{z}$ from $\mathfrak{m}_{\eta}^{q}$ such that $\mathfrak{z} \neq 0$.

Multiply both sides of (\ref{free eq.1})  and (\ref{free eq.2}) by $\mathfrak{z}$. Because $\mathfrak{z}h_i$ is in $\mathfrak{m}_{\eta}^{q+1}$, then $\mathfrak{z}h_i=0$. Thus
$\mathfrak{z}f=0$. Then (\ref{free eq.2})  and the fact that $\mathcal{M}_{\eta}$ is free force $\mathfrak{z}x_i=0$. This means that none of the $x_i$ is a unit
in $\mathcal{O}_{X,\eta}$ otherwise $\mathfrak{z}=0$ contradicting our choice of $\mathfrak{z}$. Therefore, $x_i$ is in $\mathfrak{m}_{\eta}$ for each $i$. Finally, (\ref{free eq.2}) implies that $f$ belongs to $\mathfrak{m}_{\eta}\mathcal{M}_{\eta}$ as desired.

By prime avoidance we can select $h$ from the maximal ideal of $\mathcal{O}_{X,x_0}$ such that $H$ cuts properly the irreducible components of $X$ and those of $S$ and $Z$ of positive dimension, and $h$ avoids the associated primes of $\mathcal{O}_{X,x_0}$. This choice of $h$ along with the assumption $\dim X_i \ge 2$ for some $i$ guarantees that the irreducible components of $H$ are not contained in those of $Z$ and $S$.

Let $x$ be a point in $X$ with $x \not \in S$ and $x \not \in Z$. Denote by $e$ and $p$ the ranks of $\mathcal{M}_x$ and $\mathcal{N}_x$, respectively. Because $x \not \in S$ it follows by Prp.\ 1.3 in \cite{Eisenbud} that $\mathcal{R}(\mathcal{M}_x)$ and  $\mathcal{R}(\mathcal{N}_x)$ are polynomial rings. Because $x \not \in Z$, then $\mathcal{R}(\mathcal{M}_x) \rightarrow \mathcal{R}(\mathcal{N}_x)$ is an inclusion. Thus $\mathcal{M}^{n}_{x}$ and $\mathcal{N}^{n}_{x}$ are free for each $n$ and the latter module is a direct summand of the former module. Denote by $e(n)$ and $p(n)$ the ranks of $\mathcal{M}^{n}_{x}$ and $\mathcal{N }^{n}_{x}$, respectively. 

Note that $h\mathcal{N}^{n} \cap \mathcal{M}^{n}=h\mathcal{M}^{n}$ off $H$. Therefore, to prove that the components of (\ref{powers support}) are properly contained in the components of $H$ it is enough to show that for the generic point $x$ of an irreducible component of $H$ the module $(h\mathcal{N} \cap \mathcal{M})/h\mathcal{M}$ is not supported at $x$. Fix such an $x$. Because $x \not \in S$ and $x \not \in Z$ we have 

$$\mathcal{N}^{n}_{x}/\mathcal{M}^{n}_{x} \simeq \mathcal{O}_{X,x}^{p(n)-e(n)}.$$
If there exists $f \in \mathcal{N}^{n}_{x}$ such that $hf \in \mathcal{M}^{n}_{x}$, then owing to the isomorphism above and the assumption that $H$ does not contain associated points of $X$, we get $f \in \mathcal{M}^{n}_x$ which implies $hf \in h\mathcal{M}^{n}_x$.

Suppose $X$ is reduced. Observe that in our treatment above we work locally at a point $x \notin S$. Because $X$ is reduced, the map
\begin{equation}\label{local inclusion}
i_x \colon \mathcal{R}(\mathcal{M}_x) \rightarrow \mathcal{R}(\mathcal{N}_x)
\end{equation}
is an inclusion of polynomial rings. Indeed, $\mathcal{M}$ is a direct summand of $\mathcal{N}$ locally at each $\eta_i$, so $\mathrm{ker}(i_x) \subset \mathrm{nil}(\mathcal{R}(\mathcal{M}_x))$. But $X,x$ is reduced, so $\mathrm{nil}(\mathcal{R}(\mathcal{M}_x))=0$ (see Prp.\ \ref{structure morphism} $\rm{(1)}$). In particular, for each $n$ the modules $\mathcal{M}^{n}_{x}$ and $\mathcal{N}^{n}_{x}$ are $\mathcal{O}_{X,x}$-free of ranks $e(n)$ and $p(n)$. So, if $X$ is reduced it does not make any difference if we replace $\mathcal{M}^{n}$ with any other submodule of $\mathcal{N}^{n}$ locally equal to $\mathcal{M}^{n}$ off $S$.

Finally, suppose $X$ is normal. As before, select $H$ so that it cuts properly the irreducible components of $X$ and $Z$. Then $H$ does not contain an associated point of $X$, and $Z$ does not contain an associated point of $H$. Let $x$ be a point from $H$ with $x \not \in S$. Then $\mathcal{M}_x$ and $\mathcal{N}_x$ are free of ranks $e_i$ and $p_i$ for some $i$. So $x \in X_{i}'$ and $x \not \in X_{k}'$ for $k \neq i$. Because $i_x$ in (\ref{local inclusion}) is injective,  $\mathcal{M}^{n}_{x}$ and $\mathcal{N}^{n}_{x}$ are free of ranks $e_{i}(n)$ and $p_{i}(n)$ respectively. Suppose there exists $f \in \mathcal{N}^{n}_{x}$ such that $hf \in \mathcal{M}^{n}_{x}$. Then there exist $x_j \in \mathcal{O}_{X,x}$ such that
\begin{equation}\label{z.div eq}
\sum_{j=1}^{e_{i}(n)}x_{j}m_{j}=hf
\end{equation}
where $\langle m_1, \ldots, m_{e_{i}(n)} \rangle$ is a basis for $\mathcal{M}^{n}_{x}$. Write each $m_j$ in the basis for $\mathcal{N}^{n}_x$ so that (\ref{z.div eq}) becomes a system of $p_{i}(n)$ equations in $e_{i}(n)$ unknowns. By Cramer's rule for each $j$ and each maximal minor $m_{e_{i}(n) \times e_{i}(n)}$  of the coefficient matrix of (\ref{z.div eq}) we have
\begin{equation}\label{cramer eq}
m_{e_{i}(n) \times e_{i}(n)}x_j = hu_j
\end{equation}
where $u_j \in \mathcal{O}_{X,x}$ depend on $m_{e_{i}(n) \times e_{i}(n)}$ and $f$.


Next we claim that the radicals of $\mathrm{Fitt}_{p_i-e_i}(\mathcal{N}_x/\mathcal{M}_x)$ and $\mathrm{Fitt}_{p_{i}(n)-e_{i}(n)}(\mathcal{N}_{x}^{n}/\mathcal{M}_{x}^{n})$ are the same for any $n$. Indeed, two ideals have the same radicals if and only if their varieties consist of the same points. Because the formation of Fitting ideals commutes with base change we can pass to residue fields of points and check the statement over a field (see the display at the bottom of
p.\ 235 in \cite{Bruns}).

Since the formation of Fitting ideals commutes with base change, for each $n$ we have
\begin{equation}\label{Fitt base change}
\mathrm{Fitt}_{p_{i}(n)-e_{i}(n)}(\mathcal{N}^{n}/\mathcal{M}^{n})\mathcal{O}_{X,x} = \mathrm{Fitt}_{p_{i}(n)-e_{i}(n)}(\mathcal{N}^{n}_{x}/\mathcal{M}^{n}_{x}).
\end{equation}

Because $Z$ does not contain an associated point of $H$, then by (\ref{Fitt base change}) and Prp.\ \ref{Fitt} the ideal $\mathrm{Fitt}_{p_{i}-e_{i}}(\mathcal{N}_x/\mathcal{M}_x)$ and hence $\mathrm{Fitt}_{p_{i}(n)-e_{i}(n)}(\mathcal{N}_{x}^{n}/\mathcal{M}_{x}^{n})$ avoid the associated primes of $\mathcal{O}_{X,x}/h\mathcal{O}_{X,x}$. By prime avoidance for each $n$ there exists $w_n \in \mathrm{Fitt}_{p_{i}(n)-e_{i}(n)}(\mathcal{N}^{n}_{x}/\mathcal{M}^{n}_{x})$ such that $w_n$ avoids the associated primes of $\mathcal{O}_{X,x}/h\mathcal{O}_{X,x}$ and hence those of $\mathcal{O}_{X,x}$ because $h \not \in \mathrm{z.div}(\mathcal{O}_{X,x})$.

Write $w_n$ in terms of the minors $m_{e_{i}(n) \times e_{i}(n)}$ which are generators for $\mathrm{Fitt}_{p_{i}(n)-e_{i}(n)}(\mathcal{N}^{n}_{x}/\mathcal{M}^{n}_{x})$. Then by (\ref{cramer eq}) we can write
$$w_{n}x_j=hu_{j}'$$
for each $x_j$, where $u_{j}' \in \mathcal{O}_{X,x}$. However, $w_{n} \not \in \mathrm{z.div}(\mathcal{O}_{X,x}/h \mathcal{O}_{X,x}).$ Thus
$$x_j =hu_{j}''$$
for each $j$ with $u_{j}'' \in \mathcal{O}_{X,x}$. Substitute the last equations for $x_j$ back in (\ref{z.div eq}).
We get $hf \in h\mathcal{M}^{n}_{x}$. Thus $h\mathcal{N}^{n}\cap \mathcal{M}^{n}=h\mathcal{M}^{n}$ at $x$ which proves (\ref{supp eq.}).
\end{proof}
The rank hypothesis on $\mathcal{M}$ and $\mathcal{N}$ allows us to find easily equations for $S$ and $Z$. 

Suppose $\mathcal{K}$ is an $\mathcal{O}_{X,x_0}$-module of rank $k_i$ at $\eta_i$ for $i=1, \ldots, r$. For each $i$ define 
 $X_{i}'$ as the union of all irreducible
components $X_j$ such that $k_i = k_j$. Set $W_i: = X_{i}' \cap
\mathbb{V}(\mathrm{Fitt}_{k_i}(\mathcal{K}))$ and let $W$ be the union of the $W_{i}$s. 

\begin{proposition}\label{Fitt}
The nonfree locus of $\mathcal{K}$ is $W$. 
\end{proposition}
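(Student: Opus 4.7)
My plan is to use the Fitting ideal criterion for freeness: for a finitely presented module $M$ over a local ring $(R,\mathfrak{m})$, $M$ is free of rank $r$ if and only if $\mathrm{Fitt}_r(M) = R$ and $\mathrm{Fitt}_{r-1}(M) = 0$. Two immediate consequences will be used throughout: Fitting ideals commute with localization, so $\mathrm{Fitt}_k(\mathcal{K})_x = \mathrm{Fitt}_k(\mathcal{K}_x)$ for every $x$; and $\mathrm{Fitt}_k(\mathcal{K})_x = \mathcal{O}_{X,x}$ is equivalent to $\mu(\mathcal{K}_x) \leq k$, where $\mu$ denotes the minimal number of generators. I will verify the two containments $W \subseteq \mathrm{NonFree}(\mathcal{K})$ and $\mathrm{NonFree}(\mathcal{K}) \subseteq W$ separately.

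For the first containment, suppose $x \in W_i$. By definition, $x$ lies on some irreducible component $X_j$ with $k_j = k_i$, and $\mathrm{Fitt}_{k_i}(\mathcal{K})_x \subsetneq \mathcal{O}_{X,x}$. If $\mathcal{K}_x$ were free of some rank $r$, then localizing further to $\eta_j$ would force $r = k_j = k_i$ (since $\mathcal{K}_{\eta_j}$ has rank $k_j$), and freeness of rank $r$ at $x$ would then yield $\mathrm{Fitt}_r(\mathcal{K})_x = \mathcal{O}_{X,x}$, contradicting the choice of $x$. Hence $\mathcal{K}_x$ is not free.

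For the reverse containment, take $x$ in the nonfree locus and set $I_x := \{i : x \in X_i\}$. Localizing $\mathcal{K}_x$ at each $\eta_i$ with $i \in I_x$ forces $\mu(\mathcal{K}_x) \geq k_i$, hence $\mu(\mathcal{K}_x) \geq \max_{i \in I_x} k_i$. If $x$ lies on components of distinct generic ranks, pick $i_0 \in I_x$ with $k_{i_0}$ strictly below this maximum; then $\mu(\mathcal{K}_x) > k_{i_0}$, so $\mathrm{Fitt}_{k_{i_0}}(\mathcal{K})_x \subsetneq \mathcal{O}_{X,x}$, and since $x \in X_{i_0} \subseteq X_{i_0}'$, we get $x \in W_{i_0}$. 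Otherwise all $k_i$ ($i \in I_x$) equal some common value $k$, and non-freeness of $\mathcal{K}_x$ rules out the simultaneous vanishing of $\mathrm{Fitt}_{k-1}(\mathcal{K})_x$ together with $\mathrm{Fitt}_k(\mathcal{K})_x = \mathcal{O}_{X,x}$. In this situation $\mathrm{Fitt}_k(\mathcal{K})_x \subsetneq \mathcal{O}_{X,x}$ places $x \in W_i$ for any $i \in I_x$.

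The main obstacle is the subcase where $\mu(\mathcal{K}_x) = k$ but $\mathcal{K}_x$ still fails to be free because $\mathrm{Fitt}_{k-1}(\mathcal{K})_x \neq 0$. Here I would use that $\mathrm{Fitt}_{k-1}(\mathcal{K})$ vanishes at every minimal prime of $\mathcal{O}_{X,x}$ corresponding to a component through $x$ (since $\mathcal{K}_{\eta_i}$ is free of rank $k$ there), so any nonzero element of $\mathrm{Fitt}_{k-1}(\mathcal{K})_x$ would have to be supported on embedded structure of $\mathcal{O}_{X,x}$; under the standard interpretation of the rank hypothesis—that $\mathcal{K}$ is genuinely $\mathcal{O}_{X,\eta_i}$-free of rank $k_i$ and that freeness is detected by the minimal primes through $x$—this forces $\mathrm{Fitt}_{k-1}(\mathcal{K})_x = 0$, reducing to the first alternative and completing the proof.
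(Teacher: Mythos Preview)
Your organization differs from the paper's: you prove both containments via the Fitting-ideal criterion for freeness, while the paper argues the direction $x \notin W \Rightarrow \mathcal{K}_x$ free by showing that $\mu(\mathcal{K}_y)$ is constant (equal to $k_i$) on all of $\mathrm{Spec}(\mathcal{O}_{X,x})$ and then invoking this as a freeness criterion. Both routes run straight into the subcase you flag as ``the main obstacle.''

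Your proposed resolution of that subcase is not valid. The observation that $\mathrm{Fitt}_{k-1}(\mathcal{K})$ vanishes at every minimal prime of $\mathcal{O}_{X,x}$ only says the ideal has no support at minimal primes; when $\mathcal{O}_{X,x}$ has embedded primes this does not force the ideal to be zero, and the appeal to a ``standard interpretation of the rank hypothesis'' does not supply the missing step. Concretely, take $R = k[[s,t]]/(s^2,st)$ and $\mathcal{K} = R/(s)$. The unique minimal prime is $(s)$, with $R_{(s)} \cong \mathcal{K}_{(s)} \cong k((t))$, so $\mathcal{K}$ is free of rank $1$ at the generic point, $k_1 = 1$, and $W = \mathbb{V}(\mathrm{Fitt}_1(\mathcal{K})) = \emptyset$. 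But $\mathrm{Fitt}_0(\mathcal{K}) = (s) \neq 0$, and $\mathcal{K}$ is not free at the closed point. So the proposition itself fails as written. The paper's proof has exactly the same gap: the assertion that constancy of $\mu(\mathcal{K}_y)$ on $\mathrm{Spec}(\mathcal{O}_{X,x})$ ``will imply at once that $\mathcal{K}_x$ is free'' is false in this example. In practice this is harmless, since the proposition is only applied after passing to reduced structure (via Lem.~\ref{reduction}), where both your argument and the paper's go through cleanly.
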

\begin{proof}
Suppose $x \not \in W$ and $x \in X_{i}'$ for some $i$. 
Because $x \not \in W_i$, then $\mathcal{K}_x$ can be generated by 
$k_i$ generators but not fewer
because $\mathcal{K}$ is free of rank $k_i$ at the generic points of the
irreducible components of $X_{i}'$. We claim that the locus in
$\mathrm{Spec}(\mathcal{O}_{X,x})$ where  $\mathcal{K}$ can be generated by fewer generators is
empty. This will imply at once that  $\mathcal{K}_x$ is free, i.e.\ $x$ is not in
the nonfree locus of $\mathcal{K}$.

Suppose there exists $y \in \mathrm{Spec}(\mathcal{O}_{X,x})$ such that $\mathcal{K}_y$ can be
generated by fewer generators. Then $y$ and therefore $x$ must belong to
$X_{j}'$ for some $j$ such that $k_j < k_i$. But $x$ is not in $W$, so
$x$ is not in $W_j$.  Hence $\mathcal{K}_x$ can be generated by fewer than $k_i$
generators which is impossible as observed above.

Conversely, suppose $x$ is not in the nonfree locus of $\mathcal{K}$ and $x \in
X_{i}'$ for some $i$. Then $\mathcal{K}_x$ is free of rank $k_i$ because  $\mathcal{K}$ 
is free of rank $k_i$ at the generic points of the irreducible
components of $X_{i}'$. Thus $\mathcal{K}_x$ can be generated by $k_i$
elements. So $x \not \in W_i$. We claim $x \not \in W_j$ for $j \neq
i$. Indeed, if $x \in X_{j}'$, then $\mathcal{K}$ will have rank $k_j$ at the
generic points of the irreducible components of $X_{j}'$ which is
impossible. Thus $x \not \in W$.
\end{proof}

To obtain equations for $S$ apply Prp.\ \ref{Fitt} with $\mathcal{K}=\mathcal{M}$ and $\mathcal{K}=\mathcal{N}$
to get the nonfree loci of $\mathcal{M}$ and $\mathcal{N}$. Then $S$ will be their union. Apply Prp.\ \ref{Fitt} with $\mathcal{K}=\mathcal{N}/\mathcal{M}$ and $k_i =p_i-e_i$ to obtain equations for $Z$. For a more general and global version of Prp.\ \ref{Fitt} see the material on pp.\ 55--57 in \cite{Mumford}.

\section{Bertini's theorem for extreme morphisms}\label{Bertini}
Assume $X$ is Noetherian universally catenary local scheme with closed point $x_0$. Denote by $X_1, \ldots, X_r$ the irreducible components of $X$. Let $P$ be a scheme and $f \colon P \rightarrow X$ be a proper morphism. For any closed set $V$ in $P$ denote by $V_{\mathrm{vert}}$ the union of the irreducible components of $V$ that are mapped to $x_0$, and by $V_{\mathrm{hor}}$ the union of those irreducible components that surject onto irreducible components of $f(V)$. Call the components of $V_{vert}$ {\it vertical} and those of $V_{\mathrm{hor}}$ {\it horizontal}. 
\begin{definition}
Call $f$ {\it extreme} on $V$ if $f$ maps each irreducible component of $V$ either onto $x_0$ or onto an irreducible component of $f(V)$.
\end{definition} 
Denote by  $\mathfrak{m}_{x_0}$ the ideal of $x_0$ in $\mathcal{O}_{X,x_0}$. For $h \in \mathfrak{m}_{x_0}$ denote by $H$ the subscheme of zeroes of $h$ in $X$.

For each $X_i$ denote by $P_i$ the union of components of $P$ that map to $X_i$. Denote by $f_i$ the restriction of $f$ to $P_i$. For each integer $k \ge 0$, set
\begin{equation}\label{S(i)}
S_i(k) := \{x\in X_i \ | \ \dim f_{i}^{-1}x \ge k \}.
\end{equation}
\begin{lemma}\label{S}
For each $i$ the sets $S_{i}(k)$ are closed and nonempty for finitely many $k$. Let $I$ be an ideal in $\mathcal{O}_{X,x_0}$ that avoids the minimal primes of the ideal of each $S_{i}(k)$ of positive dimension. Then there exists $h \in I$ such that $H$ cuts properly the components of all $S_{i}(k)$ of positive dimension.
\end{lemma}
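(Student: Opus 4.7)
The plan is to deduce closedness of the $S_i(k)$ from upper semicontinuity of fiber dimension, bound the range of relevant $k$ by a dimension argument, and then conclude by prime avoidance applied to the (finite) collection of minimal primes appearing. First I would restrict attention to each component: $f_i\colon P_i \to X_i$ is a proper morphism (it is the restriction of the proper $f$ to a closed and open union of components of $P$, mapping into the closed subscheme $X_i$ of $X$). Since proper morphisms are of finite type and $X$ is Noetherian, $P$ and each $P_i$ are Noetherian. Chevalley's theorem on upper semicontinuity of fiber dimension, applied to the finite-type morphism $f_i$, gives that
\[
S_i(k) = \{x \in X_i \mid \dim f_i^{-1}(x) \ge k\}
\]
is closed in $X_i$, hence in $X$.

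Next, because $P_i$ is a Noetherian scheme, $\dim P_i$ is finite, and all fiber dimensions of $f_i$ are bounded above by $\dim P_i$. Consequently $S_i(k) = \emptyset$ once $k > \dim P_i$. Since there are only finitely many indices $i$, we conclude that $S_i(k)$ is nonempty for only finitely many pairs $(i,k)$.

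For the second assertion, each $S_i(k)$ of positive dimension is a closed subset of $X=\mathrm{Spec}\,\mathcal{O}_{X,x_0}$, so the ideal of $S_i(k)$ in $\mathcal{O}_{X,x_0}$ has only finitely many minimal primes. Collecting these over the finitely many relevant $(i,k)$, we obtain a finite list of prime ideals $\mathfrak{p}_1,\dots,\mathfrak{p}_N$ of $\mathcal{O}_{X,x_0}$. The hypothesis on $I$ says precisely that $I \not\subseteq \mathfrak{p}_j$ for every $j$. Prime avoidance then yields an element $h \in I$ with $h \notin \bigcup_{j=1}^{N}\mathfrak{p}_j$, and by the definition of ``cuts properly'' recalled in Section~2, such an $h$ has the property that $H=\mathbb{V}(h)$ cuts properly the irreducible components of every $S_i(k)$ of positive dimension.

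There is no genuine obstacle here; the only nontrivial input is Chevalley's upper-semicontinuity of fiber dimension, and everything else is bookkeeping (finiteness of the indexing and of the minimal-prime collection) combined with prime avoidance in the Noetherian local ring $\mathcal{O}_{X,x_0}$.
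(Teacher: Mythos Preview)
Your proof is correct and follows essentially the same route as the paper: Chevalley's upper semicontinuity of fiber dimension for closedness, Noetherianness of $P$ for the finiteness of nonempty $S_i(k)$, and prime avoidance for the existence of $h$. One small imprecision: $P_i$ need not be \emph{open} in $P$, but since it is closed (a union of irreducible components), the restriction $f_i$ is proper anyway, so your conclusion stands.
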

\begin{proof}
Each $S_{i}(k)$ is closed in $X$ because $f$ and hence $f_i$ is proper (see the material in Chevalley's Thm.\ 13.1.3 and Cor.\ 13.1.5 in \cite{Grothendieck1}). Note that there are finitely many nonempty $S_{i}(k)$ because $P$ is of finite type over a Noetherian scheme and hence Noetherian. 
By prime avoidance we can choose $h$ from $I$ such that $h$ avoids the minimal primes of $S_{i}(k)$.
\end{proof}
The main result of this section is a Bertini's theorem for extreme morphisms. Denote by $P_{\mathrm{hor}}$ the union of irreducible components of $P$ that surject to irreducible components of $X$. Recall that for each irreducible component $X_i$ of $X$, we denote by $P_i$ the union of components of $P$ that map to $X_i$.
\begin{theorem}\label{vertical} Assume $P_{\mathrm{hor}} \cap P_i$ is equidimensional for each $i$ and assume $f \colon P \rightarrow X$ is a surjective and  extreme morphism on $P$. Let $h \in \mathfrak{m}_{x_0}$ such that $H$ cuts properly the irreducible components of positive dimension of $S_{i}(k)$ and $X_i \cap X_j$. Then $f$ is surjective and extreme on $f^{-1}H$.
\end{theorem}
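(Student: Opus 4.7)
The plan is to handle each irreducible component $Q$ of $f^{-1}H$ separately and show that $f(Q)$ is either $\{x_0\}$ or an irreducible component of $H$; surjectivity of $f$ from $f^{-1}H$ onto $H$ follows at once from the surjectivity of $f\colon P\to X$. Since $f$ is extreme on $P$, $Q$ lies in some irreducible component of $P$, which is either vertical (trivially giving $f(Q)=\{x_0\}$) or horizontal with image some $X_i$. In the horizontal case, call the component $P_h$. The hypothesis that $H$ cuts $S_i(0)=X_i$ properly yields $h\notin P_i$, so the pullback of $h$ is nonzero on $P_h$. Krull's principal ideal theorem together with universal catenarity then gives $\dim Q=\dim P_h-1=\dim X_i+d-1$, where $d$ is the common generic fiber dimension of the horizontal components of $P_i$ (well-defined by the equidimensionality of $P_{\mathrm{hor}}\cap P_i$).

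Assume $\dim f(Q)\geq 1$ and let $y$ be its generic point. Setting $j:=\dim f_i^{-1}(y)-d\geq 0$, the fiber-dimension inequality $\dim Q-\dim f(Q)\leq \dim f_i^{-1}(y)$ gives $\dim f(Q)\geq \dim X_i-1-j$. Since $f(Q)\subset H\cap X_i$ and $H$ cuts $X_i$ properly, we also have $\dim f(Q)\leq \dim X_i-1$, so the case $j=0$ immediately yields $\dim f(Q)=\dim X_i-1$.

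The main obstacle is to rule out the case $j\geq 1$. Here $f(Q)\subset S_i(d+j)$, so $f(Q)$ lies in a positive-dimensional irreducible component $T$ of $S_i(d+j)$, and the hypothesis on $S_i(d+j)$ gives $\dim f(Q)<\dim T$. To bound $\dim T$, observe by pigeonhole that some horizontal component $C$ of $P_i$ satisfies $\dim f_C^{-1}(\eta_T)\geq d+j$; applying the standard dimension inequality $\dim f_C^{-1}(T)\geq \dim T+\dim f_C^{-1}(\eta_T)$ (to a component of $f_C^{-1}(T)$ that dominates $T$) together with $\dim f_C^{-1}(T)\leq \dim C=\dim X_i+d$ yields $\dim T\leq \dim X_i-j$. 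The chain $\dim X_i-1-j\leq \dim f(Q)<\dim T\leq \dim X_i-j$ collapses to equality, so $\dim f_C^{-1}(T)=\dim C$, which forces $f_C^{-1}(T)=C$ and hence $X_i=f(C)\subset T$, contradicting $T\subsetneq X_i$.

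It remains to upgrade $\dim f(Q)=\dim X_i-1$ to the statement that $f(Q)=V(\mathfrak{p})$ is an irreducible component of $H$ (not merely of $H\cap X_i$). Universal catenarity yields $\mathrm{ht}(\mathfrak{p}/P_i)=1$; and if $\mathfrak{p}\supset P_j$ for some $j\neq i$, then $f(Q)\subset X_i\cap X_j$, so a dimension count forces $f(Q)$ to be a positive-dimensional irreducible component of $X_i\cap X_j$ contained in $H$, contradicting that $H$ cuts $X_i\cap X_j$ properly. Thus $P_i$ is the only minimal prime of $R$ below $\mathfrak{p}$, so $\mathrm{ht}(\mathfrak{p})=1$; since $h$ avoids every minimal prime of $R$, $\mathfrak{p}$ is minimal over $(h)$, and $V(\mathfrak{p})$ is an irreducible component of $H$.
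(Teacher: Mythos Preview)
Your proof is correct and follows essentially the same strategy as the paper's: dispose of components lying in $P_{\mathrm{vert}}$, use Krull's principal ideal theorem together with universal catenarity to pin down $\dim Q$, and then exploit the dimension formula and the proper cutting of the $S_i(k)$ to force $f(Q)$ to be either $\{x_0\}$ or of dimension $\dim X_i-1$.

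The main difference is organizational. The paper first proves the global bound $\dim S_i(k)\le a_i-k-1$ for $k\ge c_i+1$ (its inequality~(4.2)), and it parametrizes by $k=\dim$(generic fiber of $W_1\to f(W_1)$), which gives the \emph{equality} $\dim f(W_1)=a_1-k-1$. The contradiction in the case $k\ge c_1+1$ is then immediate: proper cutting yields $\dim f(W_1)<\dim S_1(k)\le a_1-k-1=\dim f(W_1)$. You instead parametrize by $j=\dim f_i^{-1}(y)-d$ (the full fiber dimension), which only gives the inequality $\dim f(Q)\ge \dim X_i-1-j$; you then rederive the bound $\dim T\le \dim X_i-j$ for a single component $T$ of $S_i(d+j)$ and run a longer integer-squeeze that eventually forces $f_C^{-1}(T)=C$. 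This works (the step $\dim f_C^{-1}(T)=\dim C\Rightarrow f_C^{-1}(T)=C$ uses the biequidimensionality of $C$ established via universal catenarity, as in the paper), but the paper's route is more direct. Your final height argument showing that a component of $H\cap X_i$ is already a component of $H$ is a clean alternative to the paper's one-line remark to the same effect.
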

\begin{proof} Since $f$ surjects $P$ onto $X$ by assumption and $f^{-1}H$ is a closed subscheme of $P$, it follows that $f$ surjects $f^{-1}H$ on $H$.

Let $W_1$ be a component of $f^{-1}H$. If $W_1$ is contained in $P_{\mathrm{vert}}$, then $W_1$ is vertical and we are done. So we can assume that $W_1 \subset P_{\mathrm{hor}}$ and replace $P$ by $P_{\mathrm{hor}}$.

For an irreducible component $X_i$ of $X$ set $a_i:= \dim P_i$ and $b_i:=\dim X_i$. Set $c_i:=a_i-b_i$. By assumption $f_i$ is surjective and proper. Then by the dimension formula (see \cite[\href{http://stacks.math.columbia.edu/tag/02JX}{Tag 02JX}]{Stacks} or Lem.\ 3.1 (ii) in \cite{KT-Al}) the dimension of the fiber of $f_i$ over the generic point of $X_i$ has dimension $c_i$. Consequently, $$\dim f_{i}^{-1}S_{i}(k) \leq a_i-1 \ \text{for} \ k \geq c_i+1$$ because the irreducible components of $f_{i}^{-1}S_{i}(k)$ are properly contained in those of $P_i$ for $k \geq c_{i}+1$. Next, the dimension formula applied for a component of $f_{i}^{-1}S_{i}(k)$ that maps onto a component of $S_{i}(k)$ yields
\begin{equation}\label{key inequality}
\dim S_{i}(k) \leq a_{i}-k-1 \ \text{for} \ k \geq c_{i}+1.
\end{equation}

Let $p_1$ be a closed point in an irreducible component $Q_1$ of $P_1$. Then $f_{1}(p_1)=x_0$ because $f_{1}$ is closed and $x_0$ is the only closed point in $X_1$. We want to show that $\dim Q_1= \dim \mathcal{O}_{Q_1,p_1}$. 

Denote by $k(p_1)$ and $k(x_0)$ the residue fields of $p_1$ and $x_0$, and denote by $R(Q_1)$ and $R(X_1)$ the function fields of $Q_1$ and $P_1$. Because $X$ is universally catenary, so is $X_1$. The dimension formula (\cite[\href{http://stacks.math.columbia.edu/tag/02JU}{Tag 02JU}]{Stacks} or Thm.\ B.5.1 in \cite{Huneke}) yields

\begin{equation}\label{tr.deg}
\dim \mathcal{O}_{Q_1,p_1} + \mathrm{tr.deg}_{k(x_0)} k(p_1) = \dim \mathcal{O}_{X_1,x_0} + \mathrm{tr.deg}_{R(X_1)}R(Q_1).
\end{equation}

By the Weak Nullstellensatz, $k(p_1)$ is algebraic over $k(x_0)$. Hence $\mathrm{tr.deg}_{k(x_0)} k(p_1)=0$. Also, because $f_1$ is closed, then $\dim \mathcal{O}_{X_1,x_0} + \mathrm{tr.deg}_{R(X_1)}R(Q_1) = \dim P_1$ by  \cite[\href{http://stacks.math.columbia.edu/tag/02JX}{Tag 02JX}]{Stacks}. Finally, we get $\dim \mathcal{O}_{Q_1,p_1}=\dim P_1$ as desired.

By Lem.\ \ref{S} we can choose $h \in \mathfrak{m}_{x_{0}}$ that avoids the minimal primes $\mathcal{O}_{S_{i}(k),x_0}$ for each $S_{i}(k)$ of positive dimension. In particular, this choice of $h$ guarantees that each irreducible component of $H \cap X_i$ is an irreducible component of $H$, because $S_{i}(c_i)=X_i$ and $H$ cuts properly the irreducible components of $X_i \cap X_j$ of positive dimension. Let $W_1$ be an irreducible component of $f^{-1}H$. Without lost of generality assume $W_1$ lies in an irreducible component $Q_1$ of $P_1$. 

Let $p_1$ be a closed point of $W_1$ with $\dim W_1 = \dim \mathcal{O}_{W_1,p_1}$. Consider the ring map $f_{1}^{\sharp}\colon \mathcal{O}_{X_1,x_0} \longrightarrow \mathcal{O}_{Q_1,p_1}.$ Since $h$ avoids the minimal primes of $\mathcal{O}_{X,x_{0}}$, then its image in $\mathcal{O}_{X_1,x_0}$ is nonzero. Also, $f_{1}^{\sharp}(h)$ is nonzero because by hypothesis $f_1$ maps $P_1$ onto $X_1$. Denote by $\mathcal{I}_{W_1}$ the ideal of $W_1$ in $\mathcal{O}_{Q_1,p_1}$. By assumption $\mathcal{I}_{W_1}$  is a minimal prime of $f_{1}^{\sharp}(h)$. Then by Krull's principal ideal theorem $\mathrm{ht}(\mathcal{I}_{W_1})=1$.  Since $f$ is of finite type, then so is $f_1$. Because $X_1$ is universally catenary, then $\mathcal{O}_{Q_1,p_1}$ is catenary. But $\dim \mathcal{O}_{Q_1,p_1}=a_1$ as shown above. Thus $\mathrm{ht}(\mathcal{I}_{W_1})+\dim W_1 = a_1$. Hence $\dim W_1 = a_{1}-1$. 


Assume that the generic fiber of $f$ over $f(W_1)$ is of dimension $k$. Then the dimension formula implies that $\dim f(W_1) = a_1-k-1$.

Suppose $k=c_1$. Then $\dim f(W_1)=b_{1}-1$. As $f(W_1)$ is irreducible closed set contained in the closed set $H \cap X_1$ of dimension $b_1-1$, then $f(W_1)$ must be a component of $H \cap X_1$ and hence of $H$. So, $W_1$ is a horizontal component of $f^{-1}(H)$.

Suppose $k \ge c_{1}+1$. By assumption $f(W_1) \subset S_{1}(k)$. Assume $\dim S_{1}(k)>0$. By construction $H$ intersects properly the irreducible components of $S_{1}(k)$. Therefore, $f(W_1)$ is strictly contained in an irreducible component of $S_{1}(k)$ and hence $\dim f(W_1) < \dim S_{1}(k)$, i.e. we must have $$\dim S_{1}(k) > a_{1}-k-1$$ reaching a contradiction with (\ref{key inequality}). Therefore, $\dim S_{1}(k)=0$, so $S_{1}(k)=x_0$, and $W_1$ is a vertical component of $f^{-1}H$.
\end{proof}
\begin{remark}\label{dense} \rm Assume $X$ is essentially of finite type over a field  $\Bbbk$. In Lem.\ \rm{\ref{S}} fix a minimal generating set $(i_1, \ldots, i_n)$ for $I$. Because $\Bbbk$ is a field we can associate in a unique way each $\Bbbk$-linear combination of the generators of $I$ with a point in $\mathbb{A}_{\Bbbk}^{n}$. If in addition $\Bbbk$ is infinite, then there exists a Zariski open dense subset $U_h$ in $\mathbb{A}_{\Bbbk}^{n}$ of elements $h$ such that $H$ cuts properly the components of each $S(i)$ of positive dimension.
\end{remark}

\section{The main result}\label{main results}
Let $X:=\mathrm{Spec}(R)$ be an affine Noetherian scheme. Denote by $X_1, \ldots, X_r$ its irreducible components and for each $i$ denote by $\eta_i$ the generic point of $X_i$. Let $\mathcal{M} \subset \mathcal{N}$ be a pair of finitely generated $R$-modules. Assume 
either 
\begin{enumerate}
\item[(1)] $\mathcal{M}_{\eta_i}$ and $\mathcal{N}_{\eta_i}$ are free $\mathcal{O}_{X,\eta_i}$-modules of ranks $e_i$ and $p_i$,
or
\item[(2)] $\mathcal{M}$ and $\mathcal{N}$ are contained in a free $R$-module $\mathcal{F}$.
\end{enumerate} At the end of this section we explain the necessity of these hypothesis.

Assume $\mathcal{N}$ is contained in a free module $R$-module $\mathcal{F}$. Although the canonical map $\mathcal{N}^{**} \to \mathcal{F}$ needn't be injective,
nevertheless the inclusion $\mathcal{N} \hookrightarrow \mathcal{F}$ factors through the canonical
surjection from $\mathcal{N}$ to its image in  $\mathcal{N}^{**}$, termed the {\it torsionless quotient}
of $\mathcal{N}$ in \cite{Eisenbud}, and so this surjection is an isomorphism.  Denote by $\mathcal{R}_{\mathcal{F}}(\mathcal{N})$ the subalgebra of $\mathrm{Sym}(\mathcal{F})$ generated by the elements of $\mathcal{N}$. Then by Prp.\ 1.8 \cite{Eisenbud} $\mathcal{R}(\mathcal{N})_{\mathrm{red}}$ and $\mathcal{R}_{\mathcal{F}}(\mathcal{N})_\mathrm{red}$ are the same. The same applies for $\mathcal{R}(\mathcal{M})_{\mathrm{red}}$ and $\mathcal{R}_{\mathcal{F}}(\mathcal{M})_\mathrm{red}$. Denote by $e_i$ and $p_i$ the ranks of the images of $\mathcal{M}$ and $\mathcal{N}$ in $\mathcal{F}\otimes_{\mathcal{O}_{X}} \mathcal{O}_{{X}_{\mathrm{red}}}$ at the generic point of $(X_i)_\mathrm{red}$, respectively.


Set $B := \mathrm{Proj}(\mathcal{R}(\mathcal{M}))$. Denote by $b$ the structure morphism from $B$ to $X$. Let $B_{i}$ be the union of irreducible components of $B$ that lie over $X_i$ and denote by  $b_i$ the restriction of $b$ to $B_i$. Finally, let $x_0$ be a point in $X$. Denote by $D_1, \ldots, D_q$ the irreducible components of $b^{-1}x_0$.

Suppose $D_j$ is contained in some $B_i$. Denote by $\codim (D_j,B_i)$ the Krull dimension of the local ring $\mathcal{O}_{B_i,\eta}$ where $\eta$ is the generic point of $D_j$. Denote by $\codim D_j$ the Krull dimension of $\mathcal{O}_{B,\eta}$.
\begin{proposition}\label{structure morphism} The following hold:
\begin{enumerate}
\item [{\rm(1)}] Each $B_i$ is irreducible and maps surjectively onto $X_i$.
\item [{\rm(2)}] The morphism $b$ is proper and $\dim B_i = \dim X_i + e_{i}-1$.
\item [{\rm(3)}] If $X$ is local scheme of positive dimension with closed point $x_0$, then
    $$\codim (D_j,B_i) \geq 1$$ for each $B_i$ that contains $D_j$. Additionally, if $X$ is universally catenary, then $$\codim (D_j,B_i)=\dim B_i- \dim D_j.$$
\item [{\rm(4)}] The Rees algebra $\mathcal{R}(\mathcal{M})$ and its image in $\mathcal{R}(\mathcal{N})$ have the same reduced structures. Each of the statements {\rm(1)}, {\rm(2)} and {\rm(3)} remains valid after replacing  $\mathcal{R}(\mathcal{M})$ with its image in $\mathcal{R}(\mathcal{N})$. Furthermore, after the replacement $\codim D_j$ remains the same.
\end{enumerate}
\end{proposition}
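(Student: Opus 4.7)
The plan is to exploit that at each generic point $\eta_i$ of $X_i$, the Rees algebra $\mathcal{R}(\mathcal{M})_{\eta_i}$ is a polynomial ring and hence an integral domain. Under hypothesis (1) this is immediate from Prp.\ 1.3 of \cite{Eisenbud} applied to the free module $\mathcal{M}_{\eta_i}$; under hypothesis (2) the same proposition applies to the inclusion $\mathcal{M}_{\eta_i} \hookrightarrow \mathcal{F}_{\eta_i}$ of free modules. Consequently, the minimal primes of $\mathcal{R}(\mathcal{M})$ are in bijection with those of $R$, every $B_i$ is irreducible, and $b_i$ surjects $B_i$ onto $X_i$. This proves (1). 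For (2), $b$ is proper because $\mathcal{R}(\mathcal{M})$ is a standard graded finitely generated $R$-algebra, so $b$ is projective; the generic fiber of $b_i$ is $\mathrm{Proj}(\mathrm{Sym}(\mathcal{M}_{\eta_i})) = \mathbb{P}^{e_i-1}_{k(\eta_i)}$ of dimension $e_i-1$, and the dimension formula for a dominant proper morphism between integral Noetherian schemes (\cite[Tag 02JX]{Stacks}, Lem.\ 3.1(ii) in \cite{KT-Al}) gives $\dim B_i = \dim X_i + e_i - 1$.

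For (3), since $\dim X > 0$ and $b_i$ surjects $B_i$ onto $X_i$, the image of $B_i$ contains $\eta_i$, so $B_i$ cannot lie entirely in $b^{-1}(x_0)$; hence $D_j \subsetneq B_i$, and irreducibility of $B_i$ forces $\codim(D_j, B_i) \geq 1$. When $R$ is universally catenary, pick an affine chart $\mathrm{Spec}(A)$ of $B_i$ containing $\eta_{D_j}$, with $A$ a finitely generated $R$-algebra that is a domain. The dimension formula in sharp form (\cite[Tag 02JU]{Stacks}, Thm.\ B.5.1 in \cite{Huneke}) then reads
$$\codim(D_j, B_i) + \mathrm{tr.deg}_{k(x_0)} k(\eta_{D_j}) = \dim \mathcal{O}_{X_i, x_0} + \mathrm{tr.deg}_{k(\eta_i)} \mathrm{Frac}(A).$$
The first transcendence degree equals $\dim D_j$, because $D_j$ is an irreducible closed subscheme of the fiber over $x_0$, a scheme of finite type over $k(x_0)$; the right-hand side equals $\dim B_i$ by (2). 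Rearranging yields $\codim(D_j, B_i) = \dim B_i - \dim D_j$.

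For (4), in case (2), Prp.\ 1.8 of \cite{Eisenbud} gives $\mathcal{R}(\mathcal{M})_{\mathrm{red}} = \mathcal{R}_{\mathcal{F}}(\mathcal{M})_{\mathrm{red}}$ and similarly for $\mathcal{N}$; since $\mathcal{R}_{\mathcal{F}}(\mathcal{M}) \hookrightarrow \mathcal{R}_{\mathcal{F}}(\mathcal{N})$ is an inclusion of subalgebras of $\mathrm{Sym}(\mathcal{F})$, the image of $\mathcal{R}(\mathcal{M})$ in $\mathcal{R}(\mathcal{N})$ shares its reduction with $\mathcal{R}(\mathcal{M})$. In case (1), the map $\mathcal{R}(\mathcal{M})_{\eta_i} \to \mathcal{R}(\mathcal{N})_{\eta_i}$ is the injection of polynomial rings induced by $\mathcal{M}_{\eta_i} \hookrightarrow \mathcal{N}_{\eta_i}$, so any $f$ in the kernel of $\mathcal{R}(\mathcal{M}) \to \mathcal{R}(\mathcal{N})$ is killed by some $r \in R \setminus \eta_i$; for any minimal prime $\mathfrak{p}$ of $\mathcal{R}(\mathcal{M})$ lying over $\eta_i$, the element $r$ avoids $\mathfrak{p}$ and $\mathcal{R}(\mathcal{M})/\mathfrak{p}$ is a domain, forcing $f \in \mathfrak{p}$, so the kernel is nilpotent. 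Since $\mathrm{Proj}$ is insensitive to nilpotents and Krull dimensions of local rings are unchanged by passage to the reduction, statements (1)--(3) transfer verbatim to the image of $\mathcal{R}(\mathcal{M})$ in $\mathcal{R}(\mathcal{N})$, as does $\codim D_j$. I expect the main obstacle to be the dimension formula in (3): one must carefully identify $\mathrm{tr.deg}_{k(x_0)} k(\eta_{D_j})$ with the geometric dimension $\dim D_j$ and invoke universal catenarity to pass from $R$ to the finitely generated $R$-algebra $A$ describing an affine chart of $B_i$.
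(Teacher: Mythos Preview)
Your argument for parts (2)--(4) is essentially the paper's, and is fine. The gap is in part (1), and it propagates.

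First, under hypothesis (2) the module $\mathcal{M}_{\eta_i}$ need not be free: the setup only defines $e_i$ as the rank of the image of $\mathcal{M}$ in $\mathcal{F}\otimes\mathcal{O}_{X_{\mathrm{red}}}$ at the generic point of $(X_i)_{\mathrm{red}}$. So your claim that Prp.\ 1.3 of \cite{Eisenbud} applies to ``the inclusion $\mathcal{M}_{\eta_i}\hookrightarrow\mathcal{F}_{\eta_i}$ of free modules'' is unjustified, and with it the assertion that $\mathcal{R}(\mathcal{M})_{\eta_i}$ is a polynomial ring in case (2). The paper handles this uniformly by citing Prp.\ 1.5 of \cite{Eisenbud}, which gives an inclusion-preserving bijection between the associated primes of $\mathcal{R}(\mathcal{M})$ and those of $R$; for part (2) it then passes to reductions before computing the generic fiber dimension.

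Second, even under hypothesis (1) your phrase ``polynomial ring and hence an integral domain'' is wrong when $\mathcal{O}_{X,\eta_i}$ has nilpotents; what you get is a ring with a unique minimal prime, which is what you actually need. More seriously, the step ``consequently, the minimal primes of $\mathcal{R}(\mathcal{M})$ are in bijection with those of $R$'' does not follow from the generic localizations alone: knowing that $\mathrm{Spec}\,\mathcal{R}(\mathcal{M})_{\eta_i}$ is irreducible shows that at most one minimal prime of $\mathcal{R}(\mathcal{M})$ lies over each $\eta_i$, but not that every minimal prime of $\mathcal{R}(\mathcal{M})$ contracts to a minimal prime of $R$. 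That is precisely the content of Prp.\ 1.5 of \cite{Eisenbud}, and your argument for (4) in case (1) (``so the kernel is nilpotent'') also tacitly uses it. A smaller point: in (4), case (1), the injectivity of $\mathcal{R}(\mathcal{M})_{\eta_i}\to\mathcal{R}(\mathcal{N})_{\eta_i}$ is not automatic from both modules being free; it requires that $\mathcal{M}_{\eta_i}$ is a direct summand of $\mathcal{N}_{\eta_i}$, which the paper proves in Prp.\ \ref{support} using the Artinian hypothesis.
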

\begin{proof}
By Prp.\ 1.5 of \cite{Eisenbud}, there is an inclusion-preserving bijection between the associated primes of 
$\mathcal{R}(\mathcal{M})$ and $R$ given by contraction. So each minimal prime of $\mathcal{R}(\mathcal{M})$ contracts to a minimal prime of $R$. Thus ${\rm(1)}$ holds.

Consider $\rm{(2)}$. Note $b$ is projective, so proper. Pass to $X_{\mathrm{red}}$ and $\mathcal{R}(\mathcal{M})_{\mathrm{red}}$ if necessary. By Prp.\ 1.3 of \cite{Eisenbud} the formation of 
the Rees algebra of a finitely generated module commutes with flat base change. So $\mathcal{R}(\mathcal{M})_{\eta_i} =\mathcal{R}(\mathcal{M}_{\eta_i})$. By assumption $\mathcal{M}_{\eta_{i}}$ is $\mathcal{O}_{X_i,\eta_i}$-free of rank $e_i$. Thus the dimension of the fiber of $b_i$ over $\eta_{i}$ is $e_{i}-1$. By $\rm{(1)}$  $B_i$ is irreducible. Then the dimension formula (see Lem.\ 3.1 (ii) in \cite{KT-Al}) yields $\dim B_i = \dim X_i+e_i-1$  which proves \rm{(2)}.

For $\rm{(3)}$ notice that by \rm{(1)} each component $D_j$ of  $b^{-1}(x_0)$ is contained properly in an irreducible component of $B$  because $x_0$ is not an irreducible component of $X$. Therefore, $\codim (D_j,B_i) \geq 1.$ The second part of the statement follows from an analogous computation to the one we did in (\ref{tr.deg}) this time
around applied for $B_i$ and the generic point  of $D_j$. 

Consider $\rm{(4)}$. Suppose $\mathcal{M}_{\eta_i}$ and $\mathcal{N}_{\eta_i}$ are $\mathcal{O}_{X,\eta_i}$-free. In the proof of Prp. \ref{support} we showed that $\mathcal{R}(\mathcal{M})$ and its image in $\mathcal{R}(\mathcal{N})$ are isomorphic at each $\eta_i$. In $\rm{(1)}$ we showed that the generic point of each $B_i$ maps to $\eta_i$. Hence the kernel of the homomorphism $\mathcal{R}(\mathcal{M}) \rightarrow \mathcal{R}(\mathcal{N})$ is nilpotent. 

Suppose $\mathcal{M}$ and $\mathcal{N}$ are contained in a free $R$-module $\mathcal{F}$. Then $\mathcal{R}(\mathcal{M}) \rightarrow \mathcal{R}(\mathcal{N})$ induces a homomorphism $\mathcal{R}(\mathcal{M})_{\mathrm{red}} \rightarrow \mathcal{R}(\mathcal{N})_{\mathrm{red}}$. But $\mathcal{R}(\mathcal{M})_{\mathrm{red}} \simeq \mathcal{R}_{\mathcal{F}}(\mathcal{M})_{\mathrm{red}}$ and $\mathcal{R}(\mathcal{N})_{\mathrm{red}} \simeq \mathcal{R}_{\mathcal{F}}(\mathcal{N})_{\mathrm{red}}$ as discussed in the beginning of this section. Also, $\mathcal{R}_{\mathcal{F}}(\mathcal{M})_{\mathrm{red}} \hookrightarrow \mathcal{R}_{\mathcal{F}}(\mathcal{N})_{\mathrm{red}}$ because the two algebras are generated by the images of $\mathcal{M}$ and $\mathcal{N}$ in the polynomial ring $\mathrm{Sym}(\mathcal{F})_{\mathrm{red}}$. Thus once again we obtain
that the kernel of $\mathcal{R}(\mathcal{M}) \rightarrow \mathcal{R}(\mathcal{N})$ is nilpotent. 

Therefore, in all cases $\mathcal{R}(\mathcal{M})$ and its image in $\mathcal{R}(\mathcal{N})$ have the same reduced structures. Thus, \rm{(1)}, \rm{(2)} and \rm{(3)} remain valid after the replacement. Therefore, the fiber over $x_0$ of the $\mathrm{Proj}$ of the image of $\mathcal{R}(\mathcal{M})$ in $\mathcal{R}(\mathcal{N})$ has the same reduced structure as $b^{-1}x_0$; hence, the codimension of $D_j$ remains the same.
\end{proof}

Next, for each positive integer $n$ let $\mathcal{M}^{n}$ be the image of the $n$th graded component of $\mathcal{R}(\mathcal{M})$ in $\mathcal{R}(\mathcal{N})$. Define the integral closure $\overline{\mathcal{M}^{n}}$ of $\mathcal{M}^{n}$ in $\mathcal{N}^{n}$ to be the submodule of $\mathcal{N}^{n}$ that consists of all elements $g$ satisfying an equation of integral dependence
$$g^{u}+m_1g^{u-1}+ \cdots + m_u = 0$$
in $\mathcal{R}(\mathcal{N})$ where $u$ is a positive integer and $m_i$ belongs to $\mathcal{M}^{ni}$ for each $i$.

The integral closure of $\mathcal{R}(\mathcal{M})$ in $\mathcal{R}(\mathcal{N})$ is the subalgebra of $\mathcal{R}(\mathcal{N})$ generated by all elements that satisfy an equation of integral dependence over $\mathcal{R}(\mathcal{M})$ . In general, the integral closure of $\mathcal{R}(\mathcal{M})$ may not be a Noetherian ring. To ensure that this is not the case, in our main result we will reduce to the case when $X$ is Nagata scheme. Furthermore, we will need to show that we can replace $X$ and all the algebras involved with their reductions. The following lemma enables us to do so. It describes the connection between associated points, integral closure and reductions.

Assume $\mathcal{R}$ is a commutative ring. Let $\mathcal{A} \subset \mathcal{B}$ be $\mathbb{N}_0$-graded. Denote by $\mathcal{A}_n$ and $\mathcal{B}_n$ their $n$th graded pieces. Assume $\mathcal{A}_0=\mathcal{B}_0=\mathcal{R}$. Let $\overline{\mathcal{A}_n}$ be the integral closure of $\mathcal{A}_n$ in $\mathcal{B}_n$. Denote by $(\mathcal{A}_{n})_{\text{red}}$, $(\overline{\mathcal{A}_n})_{\text{red}}$ and $(\mathcal{B}_{n})_{\text{red}}$ the images of $\mathcal{A}_n$, $\overline{\mathcal{A}_n}$ and $\mathcal{B}_n$ in $\mathcal{B}_{\mathrm{red}}$, respectively.

\begin{lemma}\label{reduction}
The following hold:
\begin{enumerate}
\item $(\overline{\mathcal{A}_n})_{\mathrm{red}}=\overline{(\mathcal{A}_n)_{\mathrm{red}}}.$
\item Suppose $h \in \mathcal{R}$. Then
$$h \not \in \mathrm{z.div}(\mathcal{B}_{n}/\overline{\mathcal{A}_n})  \ \text{if and only if} \ h \not \in \mathrm{z.div}((\mathcal{B}_{n})_{\mathrm{red}}/(\overline{\mathcal{A}_{n}})_{\mathrm{red}}).$$
\item Suppose $\mathcal{R}$ is local Noetherian ring with maximal ideal $\mathfrak{m}$. Then 
$$\mathfrak{m} \not \in \mathrm{Ass}_{\mathcal{R}}(\mathcal{B}_{n}/\overline{\mathcal{A}_n}) \ \text{if and only if} \ \mathfrak{m}_{\mathrm{red}} \not \in \mathrm{Ass}_{\mathcal{R}_{\mathrm{red}}}((\mathcal{B}_{n})_{\mathrm{red}}/(\overline{\mathcal{A}_{n}})_{\mathrm{red}}).$$
\end{enumerate}
\end{lemma}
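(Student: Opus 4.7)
The three parts are tightly linked: (1) is the key algebraic identification, (2) is a reformulation of (1) at the level of zero divisors, and (3) refines (2) at the maximal ideal using associated primes. Throughout, the main observation I plan to exploit is that any nilpotent element of $\mathcal{B}_n$ already lies in $\overline{\mathcal{A}_n}$, since $\nu^k=0$ is itself an equation of integral dependence with coefficients $0\in\mathcal{A}_{ni}$. Thus elements of $\mathcal{B}_n$ that differ by a nilpotent are simultaneously in or out of $\overline{\mathcal{A}_n}$. This will let me freely pass between $\mathcal{B}_n$ and $(\mathcal{B}_n)_{\mathrm{red}}$.

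For (1), I will prove two containments. For $(\overline{\mathcal{A}_n})_{\mathrm{red}}\subseteq \overline{(\mathcal{A}_n)_{\mathrm{red}}}$, take $g\in\overline{\mathcal{A}_n}$ with integral equation $g^u+m_1g^{u-1}+\cdots+m_u=0$, $m_i\in\mathcal{A}_{ni}$, and reduce modulo $\mathrm{nil}(\mathcal{B})$ to obtain the same relation for $g_{\mathrm{red}}$ over $(\mathcal{A}_n)_{\mathrm{red}}$. For the reverse, if $g_{\mathrm{red}}$ is integral over $(\mathcal{A}_n)_{\mathrm{red}}$, lift the relation to $\mathcal{B}$: there exist $m_i\in\mathcal{A}_{ni}$ with $\nu:=g^u+m_1g^{u-1}+\cdots+m_u$ nilpotent in $\mathcal{B}$. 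Raising the relation $g^u+m_1g^{u-1}+\cdots+m_u=\nu$ to a power $k$ with $\nu^k=0$ and expanding yields an honest integral equation for $g$ over $\mathcal{A}$; since everything is homogeneous of the correct degrees, the coefficients that appear land in $\mathcal{A}_{ni}$ for the appropriate $i$, so $g\in\overline{\mathcal{A}_n}$.

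For (2), I will use (1) to rewrite $(\overline{\mathcal{A}_n})_{\mathrm{red}}=\overline{(\mathcal{A}_n)_{\mathrm{red}}}$ and then argue symmetrically. Forward: if $hg_{\mathrm{red}}\in(\overline{\mathcal{A}_n})_{\mathrm{red}}$ then $hg=\alpha+\nu$ with $\alpha\in\overline{\mathcal{A}_n}$ and $\nu$ nilpotent, so $hg\in\overline{\mathcal{A}_n}$ by the observation above; the hypothesis on $h$ then gives $g\in\overline{\mathcal{A}_n}$, hence $g_{\mathrm{red}}\in(\overline{\mathcal{A}_n})_{\mathrm{red}}$. Backward: if $hg\in\overline{\mathcal{A}_n}$ then $hg_{\mathrm{red}}\in(\overline{\mathcal{A}_n})_{\mathrm{red}}$, so $g_{\mathrm{red}}\in(\overline{\mathcal{A}_n})_{\mathrm{red}}$ by hypothesis, and by the nilpotent remark again we conclude $g\in\overline{\mathcal{A}_n}$.

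For (3), in the local Noetherian case $\mathfrak{m}\in\mathrm{Ass}_{\mathcal{R}}(\mathcal{B}_n/\overline{\mathcal{A}_n})$ is equivalent to the existence of $g\in\mathcal{B}_n\setminus\overline{\mathcal{A}_n}$ with $\mathfrak{m} g\subseteq\overline{\mathcal{A}_n}$, since $\mathrm{Ann}(\overline{g})\supseteq\mathfrak{m}$ forces $\mathrm{Ann}(\overline{g})=\mathfrak{m}$ or $\mathrm{Ann}(\overline{g})=\mathcal{R}$ because $\mathfrak{m}$ is maximal. Using the translation from (2) applied to each $h\in\mathfrak{m}$, the witness $g$ upstairs descends to a witness $g_{\mathrm{red}}$ for $\mathfrak{m}_{\mathrm{red}}$ downstairs, and conversely any witness downstairs lifts (by the nilpotent observation) to a witness upstairs. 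The only mild subtlety I anticipate is keeping track of the degree-$n$ homogeneous bookkeeping in (1) when expanding $\nu^k=0$, but this is routine because $g$ is homogeneous of degree $n$ and each $m_i$ is homogeneous of degree $ni$, so all terms in the expansion live in a single graded piece of $\mathcal{B}$.
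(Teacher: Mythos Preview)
Your proposal is correct and follows essentially the same approach as the paper. The paper's proof uses the identical key observation (nilpotents lie in $\overline{\mathcal{A}_n}$, so elements differing by a nilpotent are simultaneously integral or not), proves (1) by persistence of integral closure in one direction and by raising the lifted relation to a power killing the nilpotent defect in the other, argues (2) via the contrapositive where you argue directly, and deduces (3) from (2) applied to generators of $\mathfrak{m}$ together with the nilpotent observation; the differences are purely stylistic.
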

\begin{proof} Consider $\rm(1)$. By persistence of integral closure $(\overline{\mathcal{A}_n})_{\text{red}} \subset \overline{(\mathcal{A}_n)_{\text{red}}}$. Let $b_n$ be an element from $\mathcal{B}_n$ whose image in $(\mathcal{B}_n)_{\text{red}}$ is in $\overline{(\mathcal{A}_n)_{\text{red}}}$. Then
$$b_{n}^{l}+a_{1}b_{n}^{l-1}+\cdots +a_{l}=b$$
where $l$ is a positive integer, each $a_{i}$ is from $\mathcal{A}$ and $b$ is a nilpotent element from $\mathcal{B}$, i.e. $b^{u}=0$ for some positive integer $u$. Raising the last equation to the power $u$ gives an equation of integral dependence for $b_n$. Hence $b_n \in \overline{\mathcal{A}_n}$. Thus $\overline{(\mathcal{A}_n)_{\text{red}}} \subset (\overline{\mathcal{A}_n})_{\text{red}}$ which proves the equality in $\rm(1)$.

Consider $\rm(2)$. Suppose $hb_n \in \overline{\mathcal{A}_n}$ for some $b_n \in \mathcal{B}_n$ with $b_n \not \in \overline{\mathcal{A}_n}$. Because $b_n \not \in \overline{\mathcal{A}_n}$, then $b_n$ is not nilpotent in $\mathcal{B}$. Hence its image $\widetilde{b_n}$ in $(\mathcal{B}_n)_{\text{red}}$ is nonzero. Also, $\widetilde{b_n}$ is not in $(\overline{\mathcal{A}_n})_{\text{red}}$ because otherwise
$b_n = a+b$ where $a \in \overline{\mathcal{A}_n}$ and $b$ is nilpotent in $\mathcal{B}$, and hence $b_n$ would be integral over $\mathcal{A}_n$ which is impossible. Finally, we get that $h\widetilde{b_n} \in (\overline{\mathcal{A}_n})_{\text{red}}$. Conversely, assume $h \in \mathrm{z.div}((\mathcal{B}_{n})_{\text{red}}/(\overline{\mathcal{A}_{n}})_{\text{red}})$. Then there exist $b_n \in \mathcal{B}_n$ with $b_n \not \in \overline{\mathcal{A}_n}$, $a \in \overline{\mathcal{A}_n}$ and a nilpotent $b$ from $\mathcal{B}$ such that
$hb_n=a+b$. But $a$ and $b$ are integral over $\mathcal{A}$. Then so is $hb_n$. Hence $hb_n \in \overline{\mathcal{A}_n}$, i.e. $h \in \mathrm{z.div}(\mathcal{B}_{n}/\overline{\mathcal{A}_{n}})$.

Consider $\rm(3)$. The statement follows from  $\rm(2)$ applied for each of the generators of $\mathfrak{m}_{\mathrm{red}}$ along with the fact that if $h$ is nilpotent in $\mathcal{R}$, then $h\mathcal{B}_n \subset \overline{\mathcal{A}_n}$ for each $n$.  
\end{proof}

The next lemma is key: it allows us to make the transition between the category of arbitrary local Noetherian schemes and that of local universally catenary Nagata schemes. The important observation is that associated points and most geometric properties of the objects considered behave well under completion. For one thing, a complete local ring  is universally catenary and Nagata. In particular, its integral closure in a reduced ring extension is module finite. Also, the  associated points of the quotients $\mathcal{N}^{n}/\overline{\mathcal{M}^{n}}$ can be detected from those of the corresponding  completions.

Let $X$ be a local Noetherian scheme, and $x_0$ its closed point. Denote by $\widehat{X}$ the completion of $X$ along $x_0$, and its closed point by $\widehat{x_0}$. Denote by $\mathfrak{m}_{x_0}$ and $\mathfrak{m}_{\widehat{x_0}}$ the maximal ideals of $\mathcal{O}_{X,x_0}$ and $\mathcal{O}_{\widehat{X,x_0}}$. For each $n$, denote by $\widehat{\mathcal{M}^{n}}$ and $\widehat{\mathcal{N}^{n}}$ the corresponding completions of $\mathcal{M}^{n}$ and $\mathcal{N}^{n}$. Finally, denote by $\overline{\widehat{\mathcal{M}^{n}}}$ the integral closure of $\widehat{\mathcal{M}^{n}}$ in $\widehat{\mathcal{N}^{n}}$.

Set $\widehat{B} := \mathrm{Proj}(\mathcal{R}(\mathcal{\widehat{M}}))$. Denote by $\widehat{b}$ the structure morphism from $\widehat{B}$ to $\widehat{X}$, and denote by $\widehat{B}_{i}$ the union of irreducible components of $\widehat{B}$ that surject onto $\widehat{X_i}$.
\begin{lemma}\label{key lemma} The following hold:
\begin{enumerate}
    \item[{\rm(1)}]  Let $h$ be an element from $\mathfrak{m}_{x_0}$. If $h \notin \mathrm{z.div}(\widehat{\mathcal{N}^{n}}/\overline{\widehat{\mathcal{M}^{n}}})$, then
    $h \not \in \mathrm{z.div}(\mathcal{N}^{n}/\overline{\mathcal{M}^{n}})$.
    \item[{\rm(2)}] 
         We have $\dim B_i = \dim \widehat{B}_i$. If $X$ is universally catenary, then  $\widehat{B}_i$  is equidimensional. Also, the fibers of $\mathrm{Proj}(\mathcal{R}(\widehat{\mathcal{M}}))$ and $\mathrm{Proj}(\mathcal{R}(\mathcal{M}))$ over $x_0$ are the same. Furthermore, if $X$ is universally catenary, then $\codim D_j$ in $\mathrm{Proj}(\mathcal{R}(\mathcal{M}))$ is the same as $\codim D_j$ in $\mathrm{Proj}(\mathcal{R}(\widehat{\mathcal{M}}))$ for each $j=1, \ldots, r.$
\end{enumerate}
\end{lemma}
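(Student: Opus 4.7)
The plan for (1) is to argue by contrapositive. Suppose $y \in \mathcal{N}^n \setminus \overline{\mathcal{M}^n}$ satisfies $hy \in \overline{\mathcal{M}^n}$; I will show that the image $\hat y$ of $y$ in $\widehat{\mathcal{N}^n}$ exhibits $h$ as a zero divisor on $\widehat{\mathcal{N}^n}/\overline{\widehat{\mathcal{M}^n}}$. Persistence of integral closure is immediate: by Prp.\ 1.3 of \cite{Eisenbud}, $\mathcal{R}(\widehat{\mathcal{N}}) = \mathcal{R}(\mathcal{N}) \otimes_R \widehat R$, so tensoring an integral equation $(hy)^u + a_1(hy)^{u-1} + \cdots + a_u = 0$ with $a_i \in \mathcal{M}^{ni}$ gives $h \hat y \in \overline{\widehat{\mathcal{M}^n}}$. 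The technical heart is to show $\hat y \notin \overline{\widehat{\mathcal{M}^n}}$. Suppose for contradiction that
\[
\hat y^u + \hat \alpha_1 \hat y^{u-1} + \cdots + \hat \alpha_u = 0
\]
in $\widehat{\mathcal{N}^{nu}}$ with $\hat \alpha_i \in \widehat{\mathcal{M}^{ni}}$. Introduce the finitely generated $R$-submodule
\[
L := \sum_{i=1}^u \mathcal{M}^{ni}\, y^{u-i} \subseteq \mathcal{N}^{nu}.
\]
By flatness of completion $\widehat L = \sum_i \widehat{\mathcal{M}^{ni}}\, \hat y^{u-i}$, so $-\hat y^u \in \widehat L$. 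Applying $-\otimes_R \widehat R$ to the short exact sequence $0 \to L \to \mathcal{N}^{nu} \to \mathcal{N}^{nu}/L \to 0$ and using Krull's intersection theorem to see that the finitely generated $R$-module $\mathcal{N}^{nu}/L$ injects into its completion, I conclude $\widehat L \cap \mathcal{N}^{nu} = L$; hence $y^u \in L$, which furnishes a monic equation $y^u + m_1 y^{u-1} + \cdots + m_u = 0$ with $m_i \in \mathcal{M}^{ni}$, contradicting $y \notin \overline{\mathcal{M}^n}$.

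For (2), I would again use $\mathcal{R}(\widehat{\mathcal{M}}) = \mathcal{R}(\mathcal{M}) \otimes_R \widehat R$ (Prp.\ 1.3 of \cite{Eisenbud}). Because $k(\widehat{x_0}) = k(x_0)$, tensoring with the residue field shows the fibers of $\widehat b$ over $\widehat{x_0}$ and of $b$ over $x_0$ coincide as schemes; in particular the $D_j$'s and their dimensions are the same. Let $P_i$ denote the minimal prime of $R$ corresponding to $X_i$, so that $\widehat{X_i} = \mathrm{Spec}(\widehat R/P_i\widehat R)$. Completion preserves Krull dimension, giving $\dim \widehat{X_i} = \dim R/P_i = \dim X_i$. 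Since the generic rank $e_i$ is preserved under the flat base change $R \to \widehat R$, applying Proposition \ref{structure morphism}(2) to both $B_i$ and $\widehat B_i$ yields $\dim B_i = \dim X_i + e_i - 1 = \dim \widehat B_i$. If $X$ is universally catenary, then $R/P_i$ is universally catenary and local, so by Ratliff's theorem $\widehat{R/P_i}$ is equidimensional; every irreducible component of $\widehat{X_i}$ then has dimension $\dim X_i$, and the dimension formula applied to each component of $\widehat B_i$ (surjective and proper onto a component of $\widehat{X_i}$ with generic fiber dimension $e_i - 1$) shows every such component has dimension $\dim X_i + e_i - 1$, so $\widehat B_i$ is equidimensional.

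For the codimension statement, with $X$ universally catenary, Proposition \ref{structure morphism}(3) in both settings gives $\codim(D_j, B_i) = \dim B_i - \dim D_j = \dim \widehat B_i - \dim D_j = \codim(D_j, \widehat B_i)$, since $D_j$ and its dimension are identical in the two fibers; taking the min over components containing $D_j$ yields the equality of the two $\codim D_j$. The delicate step of the whole lemma is the descent in (1): integral closure of modules does not generally commute with completion, but one sidesteps that by descending the monic equation witnessing $\hat y \in \overline{\widehat{\mathcal{M}^n}}$ via the tailored finitely generated submodule $L$, reducing the problem to faithful flatness for f.g.\ modules. Everything else is a bookkeeping application of flat base change and Ratliff's theorem.
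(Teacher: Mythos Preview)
Your argument is correct and, for part (2), proceeds essentially as the paper does: both invoke $\mathcal{R}(\widehat{\mathcal{M}})=\mathcal{R}(\mathcal{M})\otimes_R\widehat R$, the identification of fibers via $k(\widehat{x_0})=k(x_0)$, Ratliff's theorem (Thm.\ 31.7 in \cite{Matsumura}) for equidimensionality of $\widehat{X_i}$, and Proposition~\ref{structure morphism}(2)--(3) for the dimension and codimension computations.

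For part (1) the strategies coincide in spirit---contrapositive plus faithful flatness---but differ in packaging. The paper obtains the key contraction $\overline{\widehat{\mathcal{M}^n}}\cap\mathcal{N}^n=\overline{\mathcal{M}^n}$ in one stroke by citing Prp.\ 1.6.2 of \cite{Huneke} (integral closure contracts along faithfully flat ring maps), applied to the ideal $\mathcal{M}^n\mathcal{R}(\mathcal{N})$ inside $\mathcal{R}(\mathcal{N})\to\mathcal{R}(\mathcal{N})\otimes_R\widehat R$. You instead unwind that result by hand: fixing a putative integral equation for $\hat y$ of degree $u$, you package the possible right-hand sides into the finitely generated module $L=\sum_i\mathcal{M}^{ni}y^{u-i}\subseteq\mathcal{N}^{nu}$ and use $\widehat L\cap\mathcal{N}^{nu}=L$ (faithful flatness for finitely generated modules) to descend the equation. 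Your route is more elementary and self-contained; the paper's is quicker once the cited proposition is available.

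One small slip: in the last line you write ``taking the min over components containing $D_j$'', but $\codim D_j=\dim\mathcal{O}_{B,\eta_{D_j}}$ is the \emph{maximum} of the $\codim(D_j,B_i)$. This is harmless here since you have already established $\codim(D_j,B_i)=\codim(D_j,\widehat B_i)$ for each $i$.
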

\begin{proof} Consider $\rm{(1)}$. Identify $\mathcal{M}^{n}$, $\overline{\mathcal{M}^{n}}$ and $\mathcal{N}^{n}$ with their images in $\widehat{\mathcal{N}^{n}}$. By faithful flatness we have 
\begin{equation}\label{int. contraction}
\overline{\widehat{\mathcal{M}^{n}}} \cap \mathcal{N}^{n} = \overline{\mathcal{M}^{n}}.
\end{equation}
Indeed, $\mathcal{R}(\mathcal{N}) \otimes_{\mathcal{O}_{X,x_0}}
O_{\widehat{X,x_0}}$ is faithfully flat extension of $\mathcal{R}(\mathcal{N})$ because faithful flatness commutes with base change. Then (\ref{int. contraction}) follows at once by Prp.\ 1.6.2 from \cite{Huneke} applied with $R=\mathcal{R}(\mathcal{N})$, $S=\mathcal{R}(\mathcal{N}) \otimes_{\mathcal{O}_{X,x_0}}
O_{\widehat{X,x_0}}$ and $I:=\mathcal{M}^{n}\mathcal{R}(\mathcal{N})$. Next, suppose there exists $q \in \mathcal{N}^{n}$ such that $hq \in \overline{\mathcal{M}^{n}}$ and $q \notin \overline{\mathcal{M}^{n}}$. Then $hq \in \overline{\widehat{\mathcal{M}^{n}}}$ because by persistence of integral closure $\overline{\mathcal{M}^{n}} \subset \overline{\widehat{\mathcal{M}^{n}}}$.
But $h \notin \mathrm{z.div}(\widehat{\mathcal{N}^{n}}/\overline{\widehat{\mathcal{M}^{n}}})$ by assumption. Therefore, $q \in \overline{\widehat{\mathcal{M}^{n}}}$. By (\ref{int. contraction}) it follows that $q \in \overline{\mathcal{M}^{n}}$ which is contradiction. 

Consider $\rm{(2)}$. Let $\widehat{\eta_i}$ be the generic point of an irreducible component of $\widehat{X_i}$. Because $\widehat{X_i} \rightarrow X_i$ is faithfully flat, then the completion morphism maps $\widehat{\eta_i}$ to the generic point $\eta_i$ of an irreducible component $X_i$ of $X$. We have the following product diagram

\begin{displaymath}
\begin{CD}
\mathrm{Proj}(\mathcal{R}(\widehat{\mathcal{M}})) @>>> \widehat{X}\\
@VVV  @VVV\\
 \mathrm{Proj}(\mathcal{R}(\mathcal{M}))  @>>> X
\end{CD}
\end{displaymath}
where the horizontal arrows represent morphisms that surject irreducible components onto irreducible components and the right vertical arrow represents the completion morphism that surjects each irreducible component of $\widehat{X}$ to an irreducible component of $X$. Thus the left vertical map carries each irreducible component of $\mathrm{Proj}(\mathcal{R}(\widehat{\mathcal{M}}))$  to an irreducible component of $\mathrm{Proj}(\mathcal{R}(\mathcal{M}))$. 

Because 
$$\mathcal{O}_{X,\eta_i}^{e_i(n)} \otimes_{\mathcal{O}_{X,x_0}} \mathcal{O}_{\widehat{X},\widehat{\eta_i}}= \mathcal{O}_{\widehat{X},\widehat{\eta_i}}^{e_i(n)} \  \text{and} \  \widehat{\mathcal{M}^{n}}_{\widehat{\eta_i}}=(\mathcal{M}^{n}\otimes_{\mathcal{O}_{X,x_0}} \mathcal{O}_{X,\eta_i})\otimes_{\mathcal{O}_{X,\eta_i}}\mathcal{O}_{\widehat{X},\widehat{\eta_i}} $$ 
we get
$$\widehat{\mathcal{M}^{n}}_{\widehat{\eta_i}}=\mathcal{O}_{\widehat{X},\widehat{\eta_i}}^{e_{i}(n)}.$$
where $e_{i}(n)$ is the rank of $\mathcal{M}^{n}$ at $\eta_i$.
Applying Proposition \ref{structure morphism} \rm{(2)} for an irreducible component of $\widehat{X_i}$ of dimension equal to that of $X_i$ we get $\dim B_i = \dim \widehat{B}_i$. When $X$ is universally catenary, then $\widehat{X_i}$ is equidimensional by Thm.\ 31.7 in \cite{Matsumura}, so $\widehat{B}_i$ is equidimensional, too. 

Next we claim that
\begin{equation}\label{completion-Rees}
\widehat{\mathcal{M}}^{n}=\widehat{\mathcal{M}^{n}}.
\end{equation}
Indeed, by  Prp.\ 1.3 in \cite{Eisenbud}, the formation of the Rees algebra commutes with flat base change; so $\mathcal{R}(\mathcal{\widehat{M}}) = \mathcal{R}(\mathcal{M}) \otimes_{\mathcal{O}_{X,x_0}} \mathcal{O}_{\widehat{X,x_0}}$.
Thus (\ref{completion-Rees}) holds. 

Next, we have
$$\widehat{\mathcal{M}}^{n}/\mathfrak{m}_{\widehat{x_0}}\widehat{\mathcal{M}}^{n}=\widehat{\mathcal{M}^{n}}/\widehat{\mathfrak{m}_{x_0}\mathcal{M}^{n}}= \mathcal{M}^{n}/\mathfrak{m}_{x_0}\mathcal{M}^{n}$$
which proves that the fibers of $\mathrm{Proj}(\mathcal{R}(\mathcal{M}))$ and $\mathrm{Proj}(\mathcal{R}(\widehat{\mathcal{M}}))$ over $x_0$ and $\widehat{x_0}$ are the same.

Assume $X$ is universally catenary and $D_j$ is contained in $B_i$ such that $\codim (D_j,B_i) = \codim D_j$. Then $D_j$ is contained in $\widehat{B}_i$. But $\widehat{B}_i$ is equidimensional of the same dimension as $B_i$. Hence by Prp.\  \ref{structure morphism} {\rm(3)} the codimension of $D_j$ in $\mathrm{Proj}(\mathcal{R}(\mathcal{M}))$ is the same as the codimension of $D_j$ in $\mathrm{Proj}(\mathcal{R}(\widehat{\mathcal{M}})).$
\end{proof}
Note that if $\overline{\mathcal{M}^{n}} = \overline{\widehat{\mathcal{M}^{n}}}$ were true, then the converse of $\rm(1)$  would follow at once. However, a necessary condition for the last identity to be valid is that $X$ is analytically unramified (see Chp. 9 in \cite{Huneke}). Finally, we would like to remark that the universally catenary assumption is needed to guarantee that $\widehat{X_i}$ is equidimensional which in turn insures that the codimension of $D_j$ remains the same. 

Before stating our main result let's fix some notation. Assume $X$ is local with closed point $x_0$. As usual for each $h \in \mathfrak{m}_{x_0}$ denote by $H$ the subscheme of zeroes of $h$ in $X$. Let $S$ be the union of the nonfree loci of $\mathcal{M}$ and $\mathcal{N}$ and $Z$ be the nonfree locus of $\mathcal{N}/\mathcal{M}$. If we work with the hypothesis that $\mathcal{M}$ and $\mathcal{N}$ are contained in a free module $\mathcal{F}$, then replace $X$ by $X_{\mathrm{red}}$ and $\mathcal{M}$ and $\mathcal{N}$ with their images $\mathcal{F}\otimes_{\mathcal{O}_{X}} \mathcal{O}_{{X}_{\mathrm{red}}}$, and define $Z$ and $S$ accordingly. 

Recall that $B:=\mathrm{Proj}(\mathcal{R}(\mathcal{M}))$,  $b$ is the structure morphism $b \colon B \rightarrow X$, and $B_i:=b^{-1}X_{i}$. Denote by $b_i$ the restriction of $b$ to $B_i$. Let $S_{i}(k)$ be the closed subschemes of $X$ consisting of all points $x \in X_{i}$ such that $b_{i}^{-1}x \geq k$. Denote by $D_1, \ldots, D_q$ the irreducible components of $b^{-1}x_0$.
\begin{theorem}\label{conormal} Suppose $X$ is universally catenary. For each $j=1, \ldots, q$ assume that
    $$\codim D_j \geq 2.$$
Let $h \in \mathfrak{m}_{x_0}$ such that $H$ cuts properly the irreducible  components of positive dimension of $Z$ and $S_{i}(k)$ for each $k$. Then  $h \not \in \mathrm{z.div}(\mathcal{N}^{n}/\overline{\mathcal{M}^{n}})$ for each $n$.
\end{theorem}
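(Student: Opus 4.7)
The plan is to follow the geometric strategy outlined after Theorem \ref{conormal}: reduce first to a Nagata reduced setting, then work directly on $P := \mathrm{Proj}(\mathcal{R}^{\dagger}(\mathcal{M}))$ and combine Bertini's theorem for extreme morphisms with an analysis of embedded components of the hyperplane section $f^{-1}H$.

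\textbf{Step 1: Reduction to a complete, reduced setup.} By Lemma \ref{key lemma}(1) it suffices to prove $h \notin \mathrm{z.div}(\widehat{\mathcal{N}^n}/\overline{\widehat{\mathcal{M}^n}})$, and by part (2) of the same lemma the assumption $\codim D_j \geq 2$ transfers to the completion --- here the universally catenary hypothesis is essential, as it ensures each $\widehat{B}_i$ is equidimensional of the correct dimension. I may therefore assume $R$ is complete local, hence Nagata, so that $\mathcal{R}(\mathcal{M})$ is Nagata. Next, Lemma \ref{reduction}(3) allows me to replace all rings by their reductions; because $\mathcal{R}(\mathcal{N})$ is now reduced, the Nagata property guarantees that $\mathcal{R}^{\dagger}(\mathcal{M}) := R \oplus \overline{\mathcal{M}} \oplus \overline{\mathcal{M}^2} \oplus \cdots$ is module-finite over $\mathcal{R}(\mathcal{M})$.

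\textbf{Step 2: Reformulation via $\phi_h$ and transfer of codimensions.} Consider the graded homomorphism
$$\phi_h \colon \mathcal{R}^{\dagger}(\mathcal{M})/h\mathcal{R}^{\dagger}(\mathcal{M}) \longrightarrow \mathcal{R}(\mathcal{N})/h\mathcal{R}(\mathcal{N}).$$
Since $H$ cuts each $X_i = S_i(c_i)$ properly, $h$ is a non-zero-divisor on $\mathcal{R}(\mathcal{N})$; injectivity of $\phi_h$ in degree $n$ is then equivalent to $h \notin \mathrm{z.div}(\mathcal{N}^n/\overline{\mathcal{M}^n})$. Set $P := \mathrm{Proj}(\mathcal{R}^{\dagger}(\mathcal{M}))$ with structure morphism $f \colon P \to X$. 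Because $\mathcal{R}^{\dagger}(\mathcal{M})$ is finite over $\mathcal{R}(\mathcal{M})$, $P \to B$ has finite fibers, so $\dim P_i = \dim B_i$ and the codimension-$\geq 2$ hypothesis on the components of $b^{-1}x_0$ transfers verbatim to the components of $f^{-1}x_0$. Proposition \ref{structure morphism}(1) together with finiteness of $P \to B$ shows that $f$ is extreme on $P$.

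\textbf{Step 3: Bertini rules out vertical components of $f^{-1}H$.} With the genericity condition on $h$ (cutting the sets $S_i(k)$ of positive dimension properly, which is built into the hypothesis of the theorem), Theorem \ref{vertical} applies and shows $f$ is extreme on $f^{-1}H$. A vertical component of $f^{-1}H$ would be contained in some component of $f^{-1}x_0$, which has codimension $\geq 2$ in $P$, so by Krull's principal ideal theorem no such component can be a divisor cut out by a single element $h$. Hence every irreducible component of $f^{-1}H$ surjects onto an irreducible component of $H$.

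\textbf{Step 4: Killing $\Ker \phi_h$ via the Determinantal Trick.} Proposition \ref{support}, applied to $\overline{\mathcal{M}^n}$ (legitimate after reduction, since $\overline{\mathcal{M}^n}$ agrees with $\mathcal{M}^n$ off $S$), shows that the components of $\mathrm{Supp}_X(\Ker \phi_h)$ are properly contained in the components of $H$. But $\Ker \phi_h$ is also a homogeneous ideal in $\mathcal{R}^{\dagger}(\mathcal{M})/h\mathcal{R}^{\dagger}(\mathcal{M})$, so any associated prime of $\Ker \phi_h$ would pull back to an embedded component of $f^{-1}H$ in $P$ --- one not captured among the horizontal components produced in Step 3. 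The crux, and what I expect to be the main obstacle, is therefore to show that $h\mathcal{R}^{\dagger}(\mathcal{M})$ has no embedded primes: here I would invoke the Determinantal Trick Lemma, exploiting the fact that $\mathcal{R}^{\dagger}(\mathcal{M})$ is integrally closed in the reduced algebra $\mathcal{R}(\mathcal{N})$. An element of a hypothetical embedded prime, together with the non-zero-divisor $h$, would produce via the determinantal trick an element of $\mathcal{R}(\mathcal{N})$ integral over $\mathcal{R}^{\dagger}(\mathcal{M})$ yet not lying in it, contradicting the definition of $\mathcal{R}^{\dagger}(\mathcal{M})$. Thus $\Ker \phi_h = 0$, and the theorem follows.
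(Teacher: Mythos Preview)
Your proposal is correct and follows essentially the same approach as the paper: reduction to a complete reduced Nagata setting via Lemmas \ref{key lemma} and \ref{reduction}, transfer of the codimension hypothesis to $P=\mathrm{Proj}(\mathcal{R}^{\dagger}(\mathcal{M}))$, Bertini (Theorem \ref{vertical}) to force all components of $f^{-1}H$ to be horizontal, Proposition \ref{support} to localize the support of $\Ker\phi_h$, and then the determinantal trick to finish.

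One point in Step 4 deserves sharpening. You frame the endgame as ``show that $h\mathcal{R}^{\dagger}(\mathcal{M})$ has no embedded primes,'' but what the argument actually establishes is more specific: any associated prime $\mathfrak{m}_p$ of $I_{\phi_h}$ comes with a witness $y\in h\mathcal{R}(\mathcal{N})\cap\mathcal{R}^{\dagger}(\mathcal{M})$ (so $y/h\in\mathcal{R}(\mathcal{N})$) satisfying $(y/h)\mathfrak{m}_p\subset\mathcal{R}^{\dagger}(\mathcal{M})$. The determinantal trick then yields a \emph{dichotomy}: either $(y/h)\mathfrak{m}_p\subset\mathfrak{m}_p$, in which case $y/h$ is integral over $\mathcal{R}^{\dagger}(\mathcal{M})$ and hence already in it (contradiction), or else some $(y/h)z$ is a unit locally at $p$, forcing $\mathfrak{m}_p=(z)$ to be principal and therefore a \emph{minimal} prime over $h$ (contradicting that $p$ was embedded). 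Your sketch only names the first branch; the second is what rules out the possibility that the trick simply fails to produce an integrality equation.
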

\begin{proof} Suppose $\dim X=0$. Then $\dim b^{-1}x_0 = \dim B$. Hence $\codim D_j =0$ in $B$ for each $j$ which contradicts with our assumptions. Suppose $\dim X = 1$. Because $h$ avoids the minimal primes of $\mathcal{O}_{X,x_0}$, then $H$ cuts the irreducible components of $B$ properly. Hence each irreducible component of $b^{-1}H$ is of codimension one in an irreducible component of $B$. On the other hand, $\dim b^{-1} H = \dim b^{-1} x_0$ because $H$ is supported at $x_0$. Thus for each $D_j$ we have $\codim D_j=1$. Finally, we conclude that the assumption $\codim D_j \geq 2$ implies $\dim X_i \geq 2$ for some $i$. 

Replace $\mathcal{R}(\mathcal{M})$ by its image in $\mathcal{R}(\mathcal{N})$. By Prp.\ \ref{structure morphism} \rm{(4)} our hypothesis remain intact. Let us fix some notation. Set  
$$\mathcal{R}^{\dagger}(\mathcal{M}) := \mathcal{O}_{X,x_0} \oplus \overline{\mathcal{M}} \oplus \overline{\mathcal{M}^{2}} \oplus \cdots.$$
Note that $\mathcal{R}^{\dagger}(\mathcal{M})$ is the integral closure of $\mathcal{R}(\mathcal{M})$ in $\mathcal{R}(\mathcal{N})$ (compare with Prp. 5.2.1 from \cite{Huneke}). The next part of the proof involves several reduction steps that will allow us to 
assume that $X$ is local reduced Nagata universally catenary scheme. 

First, complete $X$ along $x_0$ and consider $\mathcal{R}(\mathcal{M}) \otimes_{\mathcal{O}_{X,x_0}}
O_{\widehat{X,x_0}}$ and $\mathcal{R}(\mathcal{N}) \otimes_{\mathcal{O}_{X,x_0}}
O_{\widehat{X,x_0}}$. By Lem.\ \ref{key lemma} \rm{(2)} and our assumptions we have $\codim D_j \ge 2$ in $\mathrm{Proj}(\mathcal{R}(\widehat{\mathcal{M}}))$ for each $j=1, \ldots, r.$ By flatness the image of $\widehat{\mathcal{M}^{n}}$ in $\mathcal{R}(\mathcal{\widehat{N}})$ is the same as the completion of the image of $\mathcal{M}^{n}$ in $\mathcal{R}(\mathcal{N})$. By Lem.\ \ref{key lemma} \rm{(1)} it's enough to show that $h \notin \mathrm{z.div}(\widehat{\mathcal{N}^{n}}/\overline{\widehat{\mathcal{M}^{n}}})$. Therefore, we can assume that $X$ is complete local scheme and and hence local universally catenary Nagata scheme, and assume that $\mathcal{M}^{n}$ and $\mathcal{N}^n$ are complete with respect to $x_0$, too.  

Next, by Lem.\ \ref{reduction} we can replace $X$, $\mathcal{R}(\mathcal{M})$, $\mathcal{R}^{\dagger}(\mathcal{M})$ and $\mathcal{R}(\mathcal{N})$ with their reduced structures. As $\mathcal{R}(\mathcal{M})$ is of finite type over a Nagata ring, and $\mathcal{R}(\mathcal{N})$ is reduced, then $\mathcal{R}^{\dagger}(\mathcal{M})$ is module-finite over $\mathcal{R}(\mathcal{M})$ by  \cite[\href{http://stacks.math.columbia.edu/tag/03GH}{Tag 03GH}]{Stacks} (cf.\ Ex.\ 9.7 in \cite{Huneke}). 

Assume $x$ is not in $Z$ and $S$. Because the formation of integral closure and localization commute we have $\overline{\mathcal{M}^{n}}_{x} = \overline{\mathcal{M}^{n}_{x}}$. By Prp.\ 1.3 \cite{Eisenbud} $\mathcal{R}(\mathcal{M})_{x}$ and $\mathcal{R}(\mathcal{N})_{x}$ are polynomial rings (see the proof of Prp.\ \ref{support}). Because $X$ is reduced and $\mathcal{M}_x$ is a direct summand of $\mathcal{N}_x$, then
$\mathcal{R}(\mathcal{M})_x$ is integrally closed in $\mathcal{R}(\mathcal{N})_x$. Hence $\overline{\mathcal{M}^{n}_{x}}=\mathcal{M}^{n}_x$. Finally,
\begin{equation}\label{int. clos.}
\overline{\mathcal{M}^{n}}_{x} = \mathcal{M}^{n}_{x}.
\end{equation}

Set $P:=\mathrm{Proj}(\mathcal{R}^{\dagger}(\mathcal{M})).$ Denote by $b^{\dagger}: P \rightarrow B$ the finite surjective map induced by the inclusion $\mathcal{R}(\mathcal{M}) \hookrightarrow \mathcal{R}^{\dagger}(\mathcal{M})$. Because $Z$ and $S$ do not contain the generic points of the irreducible components of $X$, then by (\ref{int. clos.}) it follows that 
$\mathcal{R}(\mathcal{M})$ and $\mathcal{R}^{\dagger}(\mathcal{M})$ are generically equal. So $b^{\dagger}$ is finite and birational. Set $f=b\circ b^{\dagger}$. For each irreducible component $X_i$ of $X$ set $P_i = f^{-1}X_i$. Because $B_i$ is irreducible and $b^{\dagger}$ is birational it follows that $P_i$ is irreducible of dimension equal to that of $B_i$. 


Let $D^{\dagger}$ be an irreducible component of $f^{-1}x_0$. Then $\dim D^{\dagger} = \dim b^{\dagger}(D^{\dagger})$ because $f^{-1}x_0 \rightarrow b^{-1}x_0$ is finite. Suppose $b^{\dagger}(D^{\dagger}) \subset B_i$ for some $i$ such that $\codim (b^{\dagger}(D^{\dagger}),B_i)=\codim b^{\dagger}(D^{\dagger})$. Because $D^{\dagger} \subset P_i$,  $\dim P_i = \dim B_i$ and $\codim D_j \geq 2$ for each $j$ we have
\begin{equation}\label{codimension}
\codim D^{\dagger} \geq 2.
\end{equation}
Select $H$ as in Thm.\ \ref{vertical} for $f\colon P \rightarrow X$. Then Thm.\ \ref{vertical} implies that the horizontal components of $f^{-1}H$ surject onto those of $H$. But (\ref{codimension}) implies that $f^{-1}H$ does not have vertical components. Hence all irreducible components of $f^{-1}H$ surject onto those of $H$.

Further restrict $H$ so that the irreducible components of $H$ are not contained in those of $Z$ and $S$. The last we can do because $\dim X_i \ge 2$ for some $i$ as discussed in the proof of Prp.\ \ref{support}.

Consider the homomorphism $$\phi_{h}: \mathcal{R}^{\dagger}(\mathcal{M})/h\mathcal{R}^{\dagger}(\mathcal{M}) \longrightarrow \mathcal{R}(\mathcal{N})/h\mathcal{R}(\mathcal{N}).$$
Denote its kernel by $I_{\phi_{h}}$. We identify
\begin{equation}
I_{\phi_{h}} = (h \mathcal{R}(\mathcal{N}) \cap \mathcal{R}^{\dagger}(\mathcal{M}))/h\mathcal{R}^{\dagger}(\mathcal{M}).
\end{equation}


Next, by  (\ref{int. clos.}) and Prp. \ref{support} applied to $\overline{\mathcal{M}^{n}}$ for each $n$, we obtain that each component of the support of $I_{\phi_{h}}$ in $X$ as an $\mathcal{O}_{X,x_0}$-module is properly contained in an irreducible component of $H$.
Therefore, $I_{\phi_{h}}$ now viewed as $\mathcal{O}_{f^{-1}H}$-module, vanishes locally at the horizontal components of $f^{-1}H$ because the irreducible components of $f^{-1}H$ project in $X$ onto those of $H$.  However, as we already determined $f^{-1}H$ does not have vertical components. So, each component of the support of the $\mathcal{O}_{f^{-1}H}$-module $I_{\phi_{h}}$ is properly contained in an irreducible component of $f^{-1}H$. But $I_{\phi_{h}}$ is also an ideal in $\mathcal{R}^{\dagger}(\mathcal{M})/h\mathcal{R}^{\dagger}(\mathcal{M})$. 
Thus, $\mathrm{Ass}_{f^{-1}H}(I_{\phi_{h}})$ consists of the generic points of embedded components of $f^{-1}H$.

Let $p \in \mathrm{Ass}_{f^{-1}H}(I_{\phi_{h}})$ and let $\mathfrak{m}_{p}$ be the ideal of $p$ in $\mathcal{R}^{\dagger}(\mathcal{M})$. Then  $\mathfrak{m}_{p}$ is an associated prime of $h\mathcal{R}^{\dagger}(\mathcal{M})$. Thus we can write
\begin{equation}\label{sat}
\mathfrak{m}_p=(h\mathcal{R}^{\dagger}(\mathcal{M}) :_{\mathcal{R}^{\dagger}(\mathcal{M})} y)
\end{equation}
for some $y \in \mathcal{R}^{\dagger}(\mathcal{M})$ that maps to a nonzero element of $I_{\phi_{h}}$ under $\phi_{h}$.
Note that $y/h \in \mathcal{R}(\mathcal{N})$, because $y \in h\mathcal{R}(\mathcal{N}) \cap \mathcal{R}^{\dagger}(\mathcal{M})$. By (\ref{sat}) we have
$(y/h)\mathfrak{m}_{p} \subset \mathcal{R}^{\dagger}(\mathcal{M})$. 

Suppose $(y/h)\mathfrak{m}_p \subset \mathfrak{m}_p$. Note that $h$ is a nonzero divisor of $\mathcal{R}(\mathcal{N})$ because the latter ring is reduced and $h$ avoids the minimal primes of $X$ and hence those of $\mathcal{R}(\mathcal{N})$. Because $\mathfrak{m}_p$ contains $h$, then $\mathfrak{m}_p$ is a faithful $\mathcal{R}^{\dagger}(\mathcal{M})[y/h]$-module.
Then by the Determinantal Trick Lemma (see Lem.\ 2.1.8 from \cite{Huneke}) it follows that $y/h$ is integral over $\mathcal{R}^{\dagger}(\mathcal{M})$. But $\mathcal{R}^{\dagger}(\mathcal{M})$ is integrally closed in
$\mathcal{R}(\mathcal{N})$. Hence $y/h \in \mathcal{R}^{\dagger}(\mathcal{M})$, i.e.\ $y \in  h\mathcal{R}^{\dagger}(\mathcal{M})$ which is contradiction with our choice of $y$. 

Then $(y/h)\mathfrak{m}_p \not \subset \mathfrak{m}_p$. So, locally at $p$ there exists $z \in \mathcal{O}_{P,p}$ such that $(y/h)z=1$. Therefore, by (\ref{sat}) we get $\mathfrak{m}_p=(z)$ in $\mathcal{O}_{P,p}$.  Hence $\mathfrak{m}_p$ is a minimal prime of $h\mathcal{R}^{\dagger}(\mathcal{M})$, or in other words $\overline{\{p\}}$ is an irreducible component of $f^{-1}H$. But this is impossible because by assumption $\overline{\{p\}}$ is an embedded component of $f^{-1}H$. Therefore, $I_{\phi_{h}}=0$. But $h$ is a nonzero divisor of $\mathcal{R}(\mathcal{N})$ so $h \not \in \mathrm{z.div}(\mathcal{N}^{n}/\overline{\mathcal{M}^{n}})$ for each $n$.
\end{proof} 
The following immediate corollary of Thm.\ \ref{conormal} is a far reaching generalization of  results of McAdam \cite{McAdam} for ideals,  Rees \cite{Rees}, Katz and Rice \cite{Katz2} for modules $\mathcal{M}$ contained in a free module $\mathcal{F}$ such that $e_i = \mathrm{rk}(\mathcal{F})$ for each $i=1, \ldots, r$. 

\begin{corollary}\label{McAdam}
Assume $X$ is a universally catenary local scheme with closed point $x_0$. If $x_0 \in \mathrm{Ass}_{X}(\mathcal{N}^n/\overline{\mathcal{M}^n})$ for some $n$, then $\codim D_j \leq 1$ for some $j$.
\end{corollary}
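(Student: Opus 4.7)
The plan is to argue by contraposition: assume $\codim D_j \geq 2$ for every $j=1,\dots,q$, and deduce that $x_0 \notin \mathrm{Ass}_X(\mathcal{N}^n/\overline{\mathcal{M}^n})$ for every $n$. The corollary will then follow immediately. The strategy is to produce a single element $h$ of the maximal ideal $\mathfrak{m}_{x_0}$ that is simultaneously a non-zerodivisor on all of the modules $\mathcal{N}^n/\overline{\mathcal{M}^n}$; the existence of such an $h$ contradicts $\mathfrak{m}_{x_0}$ being an associated prime of any of these modules, since associated primes are unions of annihilators and therefore every element of $\mathfrak{m}_{x_0}$ would have to be a zerodivisor.

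The first step is to check that the genericity conditions of Theorem~\ref{conormal} can actually be realized by some $h \in \mathfrak{m}_{x_0}$. By Lemma~\ref{S} (together with the description of $Z$ and $S$ coming from Proposition~\ref{Fitt}), the positive-dimensional components of $Z$ and of each $S_i(k)$ give only finitely many prime ideals of $\mathcal{O}_{X,x_0}$, and none of them equals $\mathfrak{m}_{x_0}$, since each corresponds to a positive-dimensional closed subset of $X$ (and $\mathcal{M}, \mathcal{N}$ are free at the generic points of the $X_i$, so $Z, S$ never contain those generic points either). Prime avoidance then yields $h \in \mathfrak{m}_{x_0}$ cutting these components properly.

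The second step is a direct application of Theorem~\ref{conormal}: under the standing hypothesis $\codim D_j \geq 2$ for all $j$, any such $h$ satisfies $h \notin \mathrm{z.div}(\mathcal{N}^n/\overline{\mathcal{M}^n})$ for every $n$.

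The final step is the translation back to associated primes: if $x_0 \in \mathrm{Ass}_X(\mathcal{N}^n/\overline{\mathcal{M}^n})$, then $\mathfrak{m}_{x_0}$ is the annihilator of some nonzero class in $\mathcal{N}^n/\overline{\mathcal{M}^n}$, so every element of $\mathfrak{m}_{x_0}$—in particular the $h$ produced above—would have to lie in $\mathrm{z.div}(\mathcal{N}^n/\overline{\mathcal{M}^n})$, a contradiction. There is no substantive obstacle: all the hard work is inside Theorem~\ref{conormal}, and this corollary is essentially a contrapositive bookkeeping argument combined with one application of prime avoidance.
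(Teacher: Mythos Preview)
Your proof is correct and matches the paper's approach: the paper states this as an ``immediate corollary'' of Theorem~\ref{conormal} with no separate argument, and your contrapositive reading together with the prime-avoidance step to realize the genericity hypotheses is exactly what is implicit there. The extra care you take in verifying that an admissible $h$ actually exists is a useful elaboration, but there is nothing to correct.
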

The final part of the proof of Thm.\ \ref{conormal} follows closely the argument used to show that there are no embedded primes
in the primary decomposition of a principal ideal in a ring integrally closed in its total ring of fractions (compare with Prp.\ 4.1.1 in \cite{Huneke}).

In the setup of Thm.\ \ref{conormal} assume  $X$ is normal and assume $\mathcal{N}$ is a free module $\mathcal{F}$. Then $\mathcal{R}(\mathcal{F})$ is the symmetric algebra of  $\mathcal{F}$ which is integrally closed because it is a polynomial ring over an integrally closed ring $\mathcal{O}_{X,x_0}$. Therefore, $\mathcal{R}^{\dagger}(\mathcal{M})$ is the integral closure of $\mathcal{R}(\mathcal{M})$ in its total ring of fractions
(compare with Prp.\ 5.2.4 in \cite{Huneke}). In particular, it satisfies Serre's condition $S_2$. In this case, we see easily that since $P$ is $S_2$, then $f^{-1}H$ has no embedded components. Therefore, $\mathrm{Ass}_{f^{-1}H}(I_{\phi_{h}}) = \emptyset$, so again $I_{\phi_{h}}=0$.

If $X$ is normal and $S$ is of dimension at most one, then we can avoid Bertini's theorem for extreme morphisms. Indeed, by Prp.\ \ref{support} we get directly that $I_{\phi_{h}}$ vanishes locally at the irreducible components of $f^{-1}H$. Additionally, if $\dim X=2$, then $X$ is Cohen--Macaulay and thus universally catenary, and $\dim S \leq 1$. In this situation our Cor.\ \ref{McAdam}  and Cor.\ \ref{converse deficient} generalize Cor.\ $7$ in \cite{McAdam}.

The results from the previous two sections allow us to specify explicitly the genericity conditions on $h$. Moreover, as explained in Rmk.\ \ref{dense}, if we assume that $X$ is of finite type over an infinite field $\Bbbk$, then we can construct a Zariski open dense subset $U_h$ in $\mathbb{A}_{\Bbbk}^{N}$ of elements $h$ having this property, where $N:=\dim_{\Bbbk}(\mathfrak{m}_{x_0}^{2}/\mathfrak{m}_{x_0})$.

Finally, note that under the assumptions imposed on $\mathcal{M}$ and $\mathcal{N}$ in the beginning of the section, we observed in Prp.\ \ref{structure morphism} $\rm{(4)}$ that the kernel of the homomorphism $\mathcal{R}(\mathcal{M}) \rightarrow \mathcal{R}(\mathcal{N})$ is nilpotent. This observation shows that the codimension hypothesis on each $D_j$ is preserved after replacing  $\mathcal{R}(\mathcal{M})$ with its image in  $\mathcal{R}(\mathcal{N})$. Moreover, the $S_i(k)$ too remain unaffected after the replacement. However, $Z$ depends on the embedding $\mathcal{M} \hookrightarrow \mathcal{N}$ and so does our choice of $h$. In general, without additional hypothesis on $\mathcal{M}$ and $\mathcal{N}$ one can construct examples such that $\codim D_j$ drops to  $1$ after replacing $\mathcal{R}(\mathcal{M})$ with its image in $\mathcal{R}(\mathcal{N})$. 

\begin{remark}\label{general main} \rm One can prove a more general version of Thm.\ \ref{conormal} if instead of Rees algebras we work with a pair $\mathcal{A} \subset \mathcal{B}$ of standard graded finitely generated $R$-algebras. The only obstacle to extending our results to this setting is proving an analogue of Prp.\ \ref{support}. Here is how it can be done. By Thm.\ \ref{finite ass pnts} the set $\bigcup_{n=1}^{\infty}\mathrm{Ass}_{X}(\mathcal{B}_n/\mathcal{A}_n)$ is finite. Select $h$ such that $H$ cuts properly the closures of those associated points which are of positive dimension. Further, restrict $h$ so that in addition to the usual assumptions $\widehat{H}$, the scheme of zeroes defined by the image of $h$ in the reduction of $\mathcal{O}_{\widehat{X,x_0}}$, cuts properly the components of  $\mathrm{Supp}_{\widehat{X,x_0}}(\mathcal{A}^{\dagger}/\mathcal{A})$ where $\mathcal{A}^{\dagger}$ is the integral closure of $\mathcal{A}$ in $\mathcal{B}$. With this choice of $h$, the proof of Thm.\ \ref{conormal} extends to this more general setting. The disadvantage of working in such generality though, is that we are no longer able to specify explicitly the genericity conditions on $h$. 
\end{remark}

\section{Generalized Kleiman--Thorup Theorem}\label{GKT}
As a first application of our main theorem we strengthen a fundamental result of Kleiman and Thorup (cf. Thm.\ A.1 from \cite{Thorup}) which we derive as a corollary to Thm.\ \ref{conormal}. Let $X$ be a local Noetherian universally catenary scheme of positive dimension. Consider the pair $\mathcal{M} \subset \mathcal{N}$ of $R$-modules that are free of rank $e_i$ at the generic point of each irreducible component $X_i$ of $X$. Set $B:=\mathrm{Proj}(\mathcal{R}(\mathcal{M}))$. Let $b\colon B \rightarrow X$ be the structure morphism. Denote by $B_{i}$ the irreducible component of $B$ that surjects onto $X_i$ and by $b_{i}$ the restriction of $b$ to $B_i$. 
Let $Y$ be the closed set in $X$ where $\mathcal{N}$ is not integral over $\mathcal{M}$.
Denote by $\overline{\mathcal{M}}$ the integral closure of $\mathcal{M}$ in $\mathcal{N}$.
\begin{theorem}[Kleiman--Thorup]\label{Kleiman-Thorup}
Assume $\mathcal{N}$ is not integral over $\mathcal{M}$. For each irreducible component $Y_j$  of $Y$ there exists an irreducible component $X_i$ of $X$ that contains $Y_j$ and $$\codim b_{i}^{-1}Y_j = 1$$ in $B_i$.
\end{theorem}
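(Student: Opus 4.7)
The strategy is to reduce the statement to a direct application of Cor.\ \ref{McAdam} after localizing at the generic point of $Y_j$.

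Fix an irreducible component $Y_j$ of $Y$ with generic point $y_j$. First I would identify $Y$ as $\bigcup_{n \ge 1} \mathrm{Supp}_X(\mathcal{N}^n/\overline{\mathcal{M}^n})$ and pick $n_0$ with $y_j \in \mathrm{Supp}_X(\mathcal{N}^{n_0}/\overline{\mathcal{M}^{n_0}})$. Since $Y_j$ is an irreducible component of $Y$, any proper generalization of $y_j$ lying in this support would have closure strictly containing $Y_j$ inside $Y$, which is absurd. Hence $y_j$ is a minimal element of $\mathrm{Supp}_X(\mathcal{N}^{n_0}/\overline{\mathcal{M}^{n_0}})$, and therefore $y_j \in \mathrm{Ass}_X(\mathcal{N}^{n_0}/\overline{\mathcal{M}^{n_0}})$.

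Next I would localize at $y_j$, which preserves universal catenarity, preserves the rank hypotheses at the generic points $\eta_i$ of the components $X_i \supset Y_j$, and makes $y_j$ the closed point while keeping it associated. Applying Cor.\ \ref{McAdam} then yields an irreducible component $D$ of $b^{-1}(y_j)$ with $\codim D \le 1$; this codimension is unaffected by the localization since $\mathcal{O}_{B,\eta_D}$ is intrinsic to the point $\eta_D$. By Prp.\ \ref{structure morphism}\,(3) applied in the localized setting, every component $B_i$ of $B$ containing $D$ satisfies $1 \le \codim(D, B_i) \le \codim D \le 1$, forcing equality throughout.

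Finally, fix any such $B_i$; then $X_i = b(B_i)$ contains $Y_j$. Since $D \subset b_i^{-1}(Y_j)$ is irreducible closed of codimension $1$ in $B_i$, some irreducible component of $b_i^{-1}(Y_j)$ has codimension at most $1$ in $B_i$. For the reverse inequality, observe that $\mathcal{M}_{\eta_i} \hookrightarrow \mathcal{N}_{\eta_i}$ is an inclusion of free modules of the same rank $e_i$ over the Artinian local ring $\mathcal{O}_{X,\eta_i}$ and hence an equality; thus $\eta_i \notin Y$, so $Y_j \subsetneq X_i$ and $b_i^{-1}(Y_j) \subsetneq B_i$. The main obstacle is the opening step: bridging the pointwise integral-dependence condition defining $Y_j$ with the algebraic associatedness condition that triggers Cor.\ \ref{McAdam}. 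Once that bridge is established through the minimality argument, the remaining codimension bookkeeping is a clean consequence of Prp.\ \ref{structure morphism} and universal catenarity.
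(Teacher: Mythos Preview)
Your proof is correct and follows essentially the same route as the paper: localize at the generic point of $Y_j$, recognize it as an associated point of some $\mathcal{N}^{n}/\overline{\mathcal{M}^{n}}$, and invoke Cor.\ \ref{McAdam} to produce a fiber component $D$ of codimension one. The only notable difference is in the final transfer step: the paper computes $\dim b_i^{-1}Y_j$ explicitly via the dimension formula and catenarity (equations (\ref{K-T1})--(\ref{K-T3})), whereas you argue directly that $\codim(D,B_i)=1$ is intrinsic to the local ring at $\eta_D$ and hence survives delocalization, so any component of $b_i^{-1}Y_j$ containing $D$ already has codimension $\le 1$. Your argument is a bit cleaner here and makes the role of catenarity less visible (it is still used, but only inside Cor.\ \ref{McAdam}). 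Two minor points of presentation: you could simply take $n_0=1$ since $Y=\mathrm{Supp}_X(\mathcal{N}/\overline{\mathcal{M}})$, as the paper does; and the positive-dimension hypothesis needed for Prp.\ \ref{structure morphism}\,(3) should be established before you invoke it rather than in the closing paragraph.
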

\begin{proof} Let $Y_j$ be an irreducible component of $Y$ and denote by $x_0$ be its generic point.  
Set $X':=\mathrm{Spec}(\mathcal{O}_{X,x_0})$. Because $\mathcal{N}$ and $\mathcal{M}$ are generically equal as shown in the proof of Prp.\ \ref{support}, then $Y_j$ is properly contained in the irreducible components of $X$. Thus $\dim X' \ge 1$. Because $Y_j$ is a component of $\mathrm{Supp}_{X}(\mathcal{N}/\overline{\mathcal{M}})$, then $x_0 \in \mathrm{Ass}_{X}(\mathcal{N}/\overline{\mathcal{M}})$ which in turn implies that $x_0 \in \mathrm{Ass}_{X'}(\mathcal{N}_{x_0}/\overline{\mathcal{M}}_{x_0})$. Since formation of integral closure and localization commute, it follows that $\overline{\mathcal{M}}_{x_0}=\overline{\mathcal{M}_{x_0}}$. Thus, $x_0 \in \mathrm{Ass}_{X'}(\mathcal{N}_{x_0}/\overline{\mathcal{M}_{x_0}})$. Therefore, by Cor.\ \ref{McAdam}
there exists a component $D_j$ of $b^{-1}x_0$ such that $\codim D_j = 1$
in an irreducible component of $\mathrm{Proj}(\mathcal{R}(\mathcal{M}_{x_0}))$. 

Suppose that this component of $\mathrm{Proj}(\mathcal{R}(\mathcal{M}_{x_0}))$ surjects onto an irreducible component $X_{i}'$ of $X'$ which maps in turn to an irreducible component $X_i$ of $X$. Then $D_j \subset B_i$ and
\begin{equation}\label{K-T1}
\dim D_j = \dim X_{i}'+e_i-2
\end{equation}
by Prp.\ \ref{structure morphism} \rm{(2)}. Because $D_j \subset b^{-1}x_0$ and $D_j \subset B_i$, it follows that $D_j \subset b_{i}^{-1}x_0$. Let $W_j$ be a component of $b_{i}^{-1}Y_j$ that contains $D_j$. Because $b_i$ maps $D_j$ onto $x_0$, then $b_i$ maps $W_j$ onto $Y_j$. The dimension formula (\cite[\href{http://stacks.math.columbia.edu/tag/02JX}{Tag 02JX}]{Stacks} or Lem.\ 3.1 (ii) in \cite{KT-Al}) applied for the  proper map $b_i$ along with the inequality $\dim b_{i}^{-1}x_0 \geq \dim D_j$ yield
\begin{equation}\label{K-T2}
\dim W_j \geq \dim Y_j + \dim D_j.
\end{equation}
Because $X$ is local and catenary, then so is $X_i$. Thus
\begin{equation}\label{K-T3}
\dim X_{i}' = \dim X_{i} - \dim Y_j.
\end{equation}
Combining (\ref{K-T1}), (\ref{K-T2}) and (\ref{K-T3}) we get
$$\dim W_j \geq \dim X_{i} +e_{i}-2.$$
Because the components of $b^{-1}Y$ are properly contained in those of $B$ we have $$\dim b_{i}^{-1}Y_j \leq \dim X_i +e_i-2.$$ Finally, because $W_j \subset b_{i}^{-1}Y_{j}$ we obtain $\dim b_{i}^{-1}Y_j = \dim X_i+e_i-2$, or equivalently $$\codim b_{i}^{-1}Y_j =1$$ in $B_i$ as desired.
\end{proof}
The original version of Thm.\  \ref{Kleiman-Thorup} is stated in \cite{KT-Al} (see \cite{KT} and \cite{Thorup} for subsequent versions with slightly different hypothesis). In \cite{KT} it's assumed that $X$ is irreducible universally catenary scheme, $\mathcal{N}$ is torsion-free and $\mathcal{M}$ and $\mathcal{N}$  are generically equal. In \cite{Thorup} $X$ is assumed  to be equidimensional and reduced. The modules $\mathcal{M}$ and $\mathcal{N}$ are assumed to be free of constant rank at $e$ at the generic point of each component of $X$. Additionally, $\mathcal{M}$ and $\mathcal{N}$ are contained in a free module. In our treatment we allow the irreducible components of $X$ to be of different dimensions and the ranks of $\mathcal{M}$ and $\mathcal{N}$ to vary along the generic  points of the irreducible components of $X$. Alternatively, tracing through the proof of Thm.\ \ref{conormal} and Prp.\ \ref{support} we see that we can replace the rank assumption on $\mathcal{M}$ and $\mathcal{N}$ with the hypothesis that the two modules are equal at the generic point of each irreducible component of $X$. Finally, in \cite{KT}  and \cite{Thorup} it's proved that $b^{-1}Y$ is of codimension one, whereas we prove that $b_{i}^{-1}Y_j$ is of codimension one in an irreducible component of $B$ for each irreducible component $Y_j$ of $Y$.

\section{Finiteness and asymptotic stability of associated points}\label{finiteness and asymptotic stability}

Let $X:=\mathrm{Spec}(R)$ be an affine Noetherian scheme and let $\mathcal{M} \subset \mathcal{N}$ be finitely generated $R$-modules. In this section we prove that $\mathrm{Ass}_{X}(\mathcal{N}^{n}/\overline{\mathcal{M}^{n}})$ and $\mathrm{Ass}_{X}(\mathcal{N}^{n}/\mathcal{M}^{n})$ are asymptotically stable. We derive these results as special cases of more general results (see Prps.\ \ref{finite ass pnts} and \ref{asymptotics algebras} and Thm.\ \ref{int. asym.}) about finitely generated graded algebras. Prps.\ \ref{finite ass pnts} and \ref{asymptotics algebras} are results by Katz and Puthenpurakal (see Lem.\ 3.1 in \cite{Katz3}) and Hayasaka (see Thm.\ 1.1 in \cite{Hayasaka}). We present our own proofs of these two propositions here because they are self-contained and simple. Our  Thm.\ \ref{int. asym.} generalizes results by Rees \cite{Rees81}, Ratliff \cite{Ratliff84} in the ideal case (see Sct.\ 10 in \cite{Sharp} for a recent historical account of these results), and results by Katz and Naude \cite{Katz1} for modules admitting embeddings in free modules. 

If  $X$ is universally catenary and  $\mathcal{M}$ and $\mathcal{N}$ satisfy the hypothesis from the beginning of Sct.\ \ref{main results}, we give a comprehensive geometric description of  $\mathrm{Ass}_{X}(\mathcal{N}^{n}/\overline{\mathcal{M}^{n}})$: if $x \in \mathrm{Ass}_{X}(\mathcal{N}^{n}/\overline{\mathcal{M}^{n}})$, then either $x$ is the generic point of a codimension-one component of the nonfree locus of $\mathcal{N}/\mathcal{M}$, or  $x$ is the generic point of an irreducible component of some $S_{i}(k)$ as defined in (\ref{S(i)}). Under the universally catenary hypothesis, applying Chevalley's constructability result  
we prove independently of Thm.\ \ref{int. asym.} that $\bigcup_{n=0}^{\infty}\mathrm{Ass}_{X}(\mathcal{N}^{n}/\overline{\mathcal{M}^{n}})$ is finite. 

As demonstrated in the proof of the next proposition (see Lem.\ 3.1 in \cite{Katz3}), showing that  $\bigcup_{n=0}^{\infty}\mathrm{Ass}_{X}(\mathcal{N}^{n}/\mathcal{M}^{n})$ is finite is fairly trivial. The reason is that the modules $\mathcal{M}^{n}$ as $n$ varies form a finitely generated algebra over $R$. Our proof relies on a modification of Ratliff's argument for the ideal case (see the beginning of Thm.\ 2.11 from \cite{Ratliff}). 

Let $\mathcal{A} \subset \mathcal{B}$ be finitely generated $\mathbb{N}_0$-graded $R$-algebras. Denote their $n$th graded pieces by $\mathcal{A}_n$ and $\mathcal{B}_n$ respecitevely. Set $\mathcal{A}_{0}=\mathcal{B}_{0}=R$.

\begin{proposition}[Katz--Puthenpurakal]\label{finite ass pnts}
Assume $\mathcal{A}$ is standard graded. Then
$$\bigcup_{n=1}^{\infty}\mathrm{Ass}_{X}(\mathcal{B}_n/\mathcal{A}_n)$$
is finite.
\end{proposition}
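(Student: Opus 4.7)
My plan is to reduce the assertion to the classical Brodmann--Ratliff finiteness theorem: for any Noetherian ring $S$ and any ideal $J\subseteq S$, the set $\bigcup_{n\geq 1}\mathrm{Ass}_{S}(S/J^n)$ is finite. The strategy is to realize $\mathcal{B}_n/\mathcal{A}_n$ as the degree-$n$ graded piece of a cyclic $\mathcal{B}$-module of the form $\mathcal{B}/I^n$ for one well-chosen ideal $I$.

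Set $\mathcal{A}_+ := \bigoplus_{k\geq 1}\mathcal{A}_k$ and $I := \mathcal{A}_+\mathcal{B}$, an ideal of the Noetherian ring $\mathcal{B}$ (which is Noetherian since it is a finitely generated algebra over the Noetherian ring $R$). Because $\mathcal{A}$ is standard graded, $\mathcal{A}_+ = \mathcal{A}_1\cdot\mathcal{A}$, so $\mathcal{A}_+^n = \mathcal{A}_n\cdot\mathcal{A} = \bigoplus_{k\geq n}\mathcal{A}_k$, and therefore
$$I^n = (\mathcal{A}_+\mathcal{B})^n = \mathcal{A}_+^n\mathcal{B} = \sum_{k\geq n}\mathcal{A}_k\mathcal{B}.$$
Taking the degree-$n$ homogeneous component inside the graded ring $\mathcal{B}$, the only nonzero contribution is $\mathcal{A}_n\mathcal{B}_0 = \mathcal{A}_n$ (the pieces $\mathcal{A}_k\mathcal{B}_{n-k}$ vanish for $k>n$), so $(\mathcal{B}/I^n)_n = \mathcal{B}_n/\mathcal{A}_n$.

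As an $R$-module, $\mathcal{B}/I^n$ decomposes into the direct sum of its graded pieces, and associated primes respect direct sums; hence $\mathrm{Ass}_R(\mathcal{B}_n/\mathcal{A}_n) \subseteq \mathrm{Ass}_R(\mathcal{B}/I^n)$ for every $n$. Combining this with the standard fact $\mathrm{Ass}_R(N) = \{\mathfrak{P}\cap R : \mathfrak{P}\in\mathrm{Ass}_{\mathcal{B}}(N)\}$ for any $\mathcal{B}$-module $N$, one obtains
$$\bigcup_{n\geq 1}\mathrm{Ass}_R(\mathcal{B}_n/\mathcal{A}_n) \;\subseteq\; \bigcup_{n\geq 1}\{\mathfrak{P}\cap R : \mathfrak{P}\in\mathrm{Ass}_{\mathcal{B}}(\mathcal{B}/I^n)\},$$
and the Brodmann--Ratliff theorem applied to the Noetherian ring $\mathcal{B}$ and the ideal $I$ forces the right-hand side to be finite.

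The single point that warrants careful checking is the identification $(I^n)_n = \mathcal{A}_n$: this is precisely where the standard-graded hypothesis on $\mathcal{A}$ is essential, since otherwise contributions $\mathcal{A}_k\mathcal{B}_{n-k}$ with $1\leq k<n$ would enter $(I^n)_n$ and typically fail to lie in $\mathcal{A}_n$, ruining the clean degree-by-degree identification. Once this identification is in place, everything else is bookkeeping plus an invocation of the known ideal case.
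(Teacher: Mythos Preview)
Your proof is correct and follows essentially the same route as the paper's. Both arguments identify $\mathcal{B}_n/\mathcal{A}_n$ with the degree-$n$ piece of $\mathcal{B}/\langle\mathcal{A}_n\rangle$ (your $I^n=\mathcal{A}_n\mathcal{B}$ is exactly the paper's $\langle\mathcal{A}_n\rangle$), then show that $\bigcup_n\mathrm{Ass}_{\mathcal{B}}(\mathcal{B}/\langle\mathcal{A}_n\rangle)$ is finite, and finally contract associated primes to $R$. The only difference is packaging: you invoke Brodmann--Ratliff as a black box, whereas the paper sketches the extended Rees algebra argument (introduce $u=1/t$ in $\mathcal{B}[\mathcal{A}_1 t,u]$, observe $u$ is regular and $u^n\mathcal{R}\cap\mathcal{B}=\langle\mathcal{A}_n\rangle$) that underlies Ratliff's proof; and you cite the contraction identity $\mathrm{Ass}_R(N)=\{\mathfrak{P}\cap R:\mathfrak{P}\in\mathrm{Ass}_{\mathcal{B}}(N)\}$ as a standard fact, while the paper writes out that argument explicitly. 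One minor expository slip: in your final paragraph, if $\mathcal{A}$ were not standard graded the failure would be that $(I^n)_n=(\mathcal{A}_1)^n$ could be \emph{smaller} than $\mathcal{A}_n$, not that extra terms $\mathcal{A}_k\mathcal{B}_{n-k}$ with $k<n$ would enter (those lie in $I$, not in $I^n$). This does not affect the validity of your argument.
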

\begin{proof} Let $a_1, \ldots, a_k$ be the generators of $\mathcal{A}$ over $R$. 
Let $t$ be an indeterminate and set $u:=1/t$. Consider the ring $$\mathcal{R}:=\mathcal{B}[a_1t,\ldots,a_kt,u].$$ It's a subring of $\mathcal{B}[t,u]$. Denote by  $\langle \mathcal{A}_n \rangle$ the ideal in $\mathcal{B}$ generated by $\mathcal{A}_n$. The elements in $\mathcal{R}$ are finite sums $\sum_{-p}^{q}c_{n}t^{n}$ where $c_n \in \langle \mathcal{A}_n \rangle$ with the convention that $\langle \mathcal{A}_n \rangle =\mathcal{B}$ for $n \leq 0$. Furthermore, $u$ is a regular element in $\mathcal{R}$ and $u^{n}\mathcal{R} \cap \mathcal{B} = \langle \mathcal{A}_n \rangle$. Because $u$ is regular, the associated primes of $u^n$ are the associated primes of $u$ for each $n$. Hence $\bigcup_{n=0}^{\infty}\mathrm{Ass}_{\mathcal{B}}(\mathcal{B}/\langle \mathcal{A}_n \rangle)$ is finite (see the beginning of the proof of Thm.\ 2.1. in \cite{Ratliff}). 

Next, let $x \in X$ be an associated point of the $R$-module $\mathcal{B}_n/\mathcal{A}_n$ for some $n$. Denote the ideal of $x$ in $R$ by $\mathcal{I}_x$ and let $\widetilde{b_n}$ be an element of the $\mathcal{B}_n/\mathcal{A}_n$ such that its annihilator is equal to $\mathcal{I}_x$. Let $\mathcal{I}(\widetilde{b_n})$ be the ideal in $\mathcal{B}$ that annihilates $\widetilde{b_n}$ as an element of $\mathcal{B}/\langle \mathcal{A}_n \rangle$. Then $\mathcal{I}(\widetilde{b_n}) \cap R = \mathcal{I}_x$. Because $\mathcal{I}_x$ is prime, the contraction of the radical  $\mathcal{I}(\widetilde{b_n})$ to $R$  is $\mathcal{I}_x$. However, the radical of $\mathcal{I}(\widetilde{b_n})$ is the intersection of minimal associated primes of the $\mathcal{B}$-module $\mathcal{B}/\langle \mathcal{A}_n \rangle$. Therefore, there exists an associated prime $Q(\widetilde{b_n})$ of $\mathcal{B}/\langle \mathcal{A}_n \rangle$ from $\mathcal{B}$ such that  its contraction to $R$ is $\mathcal{I}_x$. But $\bigcup_{n=0}^{\infty}\mathrm{Ass}_{\mathcal{B}}(\mathcal{B}/\langle \mathcal{A}_n \rangle)$ is finite as shown above. Therefore, there are finitely many $x$ from $X$ that are associated points of the quotients $\mathcal{B}_n/\mathcal{A}_n$.
\end{proof}
Suppose $\mathcal{M} \subset \mathcal{N}$ are finitely generated $R$-modules with no further hypothesis. Applying Prp.\ \ref{finite ass pnts} with  $\mathcal{A}$ equal to the image of $\mathcal{R}(\mathcal{M})$ in  $\mathcal{B}:=\mathcal{R}(\mathcal{N})$  we get the following corollary.
\begin{corollary}\label{finitely many}
The set $\bigcup_{i=1}^{\infty}\mathrm{Ass}_{X}(\mathcal{N}^{n}/\mathcal{M}^{n})$ is finite.
\end{corollary}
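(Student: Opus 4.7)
The plan is to deduce Cor.\ \ref{finitely many} as an immediate specialization of Prp.\ \ref{finite ass pnts}. Set $\mathcal{B} := \mathcal{R}(\mathcal{N})$ and let $\mathcal{A}$ denote the image of $\mathcal{R}(\mathcal{M})$ in $\mathcal{R}(\mathcal{N})$ under the natural homomorphism induced by the inclusion $\mathcal{M} \hookrightarrow \mathcal{N}$. By construction, $\mathcal{A} \subset \mathcal{B}$, and $\mathcal{A}_0 = \mathcal{B}_0 = R$. The $n$th graded piece of $\mathcal{A}$ is, by definition, $\mathcal{M}^n$, and the $n$th graded piece of $\mathcal{B}$ is $\mathcal{N}^n$, so that $\mathcal{B}_n / \mathcal{A}_n = \mathcal{N}^n / \mathcal{M}^n$.

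Next I would verify the three hypotheses of Prp.\ \ref{finite ass pnts}. First, since $\mathcal{M}$ and $\mathcal{N}$ are finitely generated $R$-modules, the symmetric algebras $\mathrm{Sym}(\mathcal{M})$ and $\mathrm{Sym}(\mathcal{N})$ are finitely generated $R$-algebras, hence so are their quotients $\mathcal{R}(\mathcal{M})$ and $\mathcal{R}(\mathcal{N})$. Since $\mathcal{A}$ is a homomorphic image of $\mathcal{R}(\mathcal{M})$, it too is a finitely generated $R$-algebra, and similarly $\mathcal{B} = \mathcal{R}(\mathcal{N})$ is finitely generated. Second, both $\mathcal{A}$ and $\mathcal{B}$ are $\mathbb{N}_0$-graded, inheriting their grading from the symmetric algebras. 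Third, $\mathcal{R}(\mathcal{M})$ is standard graded because it is generated in degree one by the image of $\mathcal{M} = \mathcal{M}^1$, and a homomorphic image of a standard graded algebra is standard graded, so $\mathcal{A}$ is standard graded.

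With all hypotheses verified, Prp.\ \ref{finite ass pnts} gives that $\bigcup_{n=1}^{\infty}\mathrm{Ass}_{X}(\mathcal{B}_n/\mathcal{A}_n)$ is finite, which translates to $\bigcup_{n=1}^{\infty}\mathrm{Ass}_{X}(\mathcal{N}^n/\mathcal{M}^n)$ being finite, as desired. There is no genuine obstacle here; the only point that requires any care is checking that the image $\mathcal{A}$ of $\mathcal{R}(\mathcal{M})$ in $\mathcal{R}(\mathcal{N})$ really is standard graded (as opposed to the possibly more subtle kernel behavior in the Eisenbud--Huneke--Ulrich definition of $\mathcal{R}(\mathcal{M})$), and this follows because $\mathrm{Sym}(\mathcal{M})$ is generated in degree one and any quotient inherits this property.
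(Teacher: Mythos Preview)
Your proof is correct and follows exactly the same approach as the paper: apply Prp.\ \ref{finite ass pnts} with $\mathcal{B}:=\mathcal{R}(\mathcal{N})$ and $\mathcal{A}$ the image of $\mathcal{R}(\mathcal{M})$ in $\mathcal{B}$. The paper states this in a single sentence, leaving the routine hypothesis checks implicit, whereas you spell them out; but the argument is identical.
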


Assume $\mathcal{A} \subset \mathcal{B}$ are finitely generated standard graded $R$-algebras. The following result (see Thm.\ 1.1 in \cite{Hayasaka} and Cor.\ 3.3. in \cite{Katz3} for a generalization) shows that the sequence of sets $\mathrm{Ass}_{X}(\mathcal{B}_n/\mathcal{A}_n)$ as $n$ varies is asymptotically 
stable. 
\begin{proposition}[Hayasaka]\label{asymptotics algebras}
Suppose $x_0 \in \mathrm{Ass}_{X}(\mathcal{B}_n/\mathcal{A}_n)$ for infinitely  many $n$. Then $x_0 \in \mathrm{Ass}_{X}(\mathcal{B}_n/A_n)$ for each $n$ large enough. In particular, there exists $n_0$ such that
\begin{equation}\label{stable}
\mathrm{Ass}_X(\mathcal{B}_{n_{0}}/\mathcal{A}_{n_{0}})= \mathrm{Ass}_X(\mathcal{B}_{n_{0}+k}/\mathcal{A}_{n_{0}+k})
\end{equation}
for any  $k$.
\end{proposition}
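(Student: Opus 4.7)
The plan is to localize and study a convenient graded module. First I would localize $R$ at $x_0$, reducing to the case where $R$ is local with maximal ideal $\mathfrak{m} := \mathfrak{m}_{x_0}$, so that the hypothesis reads: $\mathfrak{m} \in \mathrm{Ass}_R(\mathcal{B}_n/\mathcal{A}_n)$ for infinitely many $n$, equivalently $L_n := (\mathcal{A}_n :_{\mathcal{B}_n} \mathfrak{m})/\mathcal{A}_n \neq 0$ for infinitely many $n$. The inclusion $\mathcal{A}_1 \cdot (\mathcal{A}_n :_{\mathcal{B}_n} \mathfrak{m}) \subseteq (\mathcal{A}_{n+1} :_{\mathcal{B}_{n+1}} \mathfrak{m})$, which follows from $\mathcal{A}_1 \mathcal{A}_n \subseteq \mathcal{A}_{n+1}$, gives $L := \bigoplus_n L_n$ the structure of a graded $\mathcal{A}$-module annihilated by $\mathfrak{m}$, hence of a graded module over the standard graded Noetherian $k$-algebra $\bar{\mathcal{A}} := \mathcal{A}/\mathfrak{m}\mathcal{A}$, where $k := R/\mathfrak{m}$.

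The heart of the argument is to show that $L$ is a finitely generated $\bar{\mathcal{A}}$-module. My approach is to embed $L$ into a genuinely finitely generated graded $\bar{\mathcal{A}}$-module by using the extended Rees algebra $\mathcal{R} := \mathcal{B}[a_1 t, \ldots, a_s t, u]$ from the proof of Proposition~\ref{finite ass pnts}, where $a_1, \ldots, a_s$ generate $\mathcal{A}_1$ and $u = 1/t$. Since $u$ is a nonzero divisor in the Noetherian ring $\mathcal{R}$ and $u^n\mathcal{R} \cap \mathcal{B} = \mathcal{A}_n\mathcal{B}$, one realizes $\mathcal{B}_n/\mathcal{A}_n$ as a specific bi-graded piece of $\mathcal{R}/u^n\mathcal{R}$; moreover the equality $\mathrm{Ass}_{\mathcal{R}}(\mathcal{R}/u^n\mathcal{R}) = \mathrm{Ass}_{\mathcal{R}}(\mathcal{R}/u\mathcal{R})$, which is independent of $n$ because $u$ is regular, provides the uniform control that, after reduction modulo $\mathfrak{m}\mathcal{R}$, bounds the degrees of a generating set for $L$ over $\bar{\mathcal{A}}$. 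This is the main obstacle: $\mathcal{B}/\mathcal{A}$ itself is typically not finitely generated as an $\mathcal{A}$-module, so finite generation of $L$ does not follow merely from Noetherianity of $\bar{\mathcal{A}}$, and the bi-graded bookkeeping together with the identity $u^n\mathcal{R} \cap \mathcal{B} = \mathcal{A}_n\mathcal{B}$ is essential.

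Once finite generation of $L$ over $\bar{\mathcal{A}}$ is established, the Hilbert--Serre theorem implies that $n \mapsto \dim_k L_n$ agrees with a polynomial for $n \gg 0$. Since $L_n \neq 0$ for infinitely many $n$ by hypothesis, this polynomial cannot be identically zero, so $L_n \neq 0$ for all $n \gg 0$, proving the first assertion. The ``in particular'' statement of eventual stability of the sets $\mathrm{Ass}_X(\mathcal{B}_n/\mathcal{A}_n)$ then follows from the first assertion together with the finiteness of $\bigcup_n \mathrm{Ass}_X(\mathcal{B}_n/\mathcal{A}_n)$ provided by Proposition~\ref{finite ass pnts}: each prime in this finite set either appears in $\mathrm{Ass}_X(\mathcal{B}_n/\mathcal{A}_n)$ for only finitely many $n$ or for cofinitely many $n$, and taking $n_0$ larger than all the relevant thresholds yields the asserted stability \eqref{stable}.
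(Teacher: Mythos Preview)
Your strategy has a genuine gap: the claim that $L=\bigoplus_n L_n$ is finitely generated over $\bar{\mathcal{A}}=\mathcal{A}/\mathfrak{m}\mathcal{A}$ is false in general, so the Hilbert--Serre step cannot be applied as you describe. For a concrete counterexample, take $R=k[s]/(s^2)$ with $\mathfrak m=(s)$, $\mathcal{A}=R[x]\subset\mathcal{B}=R[x,y]$. A direct computation gives $(\mathcal{A}_n:_{\mathcal{B}_n}\mathfrak m)=\mathcal{A}_n\oplus s\cdot\bigoplus_{i=1}^{n}k\,x^{n-i}y^{i}$, hence $L_n\cong k^{n}$ and $\dim_k L_n=n$. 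But $\bar{\mathcal{A}}=k[x]$ is a standard graded polynomial ring in one variable, so every finitely generated graded $\bar{\mathcal{A}}$-module has eventually constant Hilbert function; thus $L$ is \emph{not} finitely generated over $\bar{\mathcal{A}}$. Your justification via the extended Rees ring $\mathcal{R}$ is only a heuristic: the equality $\mathrm{Ass}_{\mathcal{R}}(\mathcal{R}/u^n\mathcal{R})=\mathrm{Ass}_{\mathcal{R}}(\mathcal{R}/u\mathcal{R})$ controls contractions of associated primes of $\mathcal{B}/\mathcal{A}_n\mathcal{B}$ to $R$, but it does not translate into a degree bound for generators of $L$ over $\bar{\mathcal{A}}$, and the counterexample shows no such bound can exist.

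The paper's proof proceeds along entirely different lines and avoids any finite-generation claim for $L$. It argues by contradiction: assuming $x_0\notin\mathrm{Ass}_X(\mathcal{B}_n/\mathcal{A}_n)$ for infinitely many $n$, it uses the finiteness of $\bigcup_n\mathrm{Ass}_X(\mathcal{B}_n/\mathcal{A}_n)$ (Proposition~\ref{finite ass pnts}) and prime avoidance to choose a single $h\in\mathfrak m_{x_0}$ with $h\notin\mathrm{z.div}(\mathcal{B}_n/\mathcal{A}_n)$ for infinitely many $n$. It then studies two finitely generated objects that \emph{do} live inside Noetherian rings: the kernel $K(h)$ of $\mathcal{A}/h\mathcal{A}\to\mathcal{B}/h\mathcal{B}$, and the annihilator ideals $\mathcal{I}_{\mathcal{A}}=(0:_\mathcal{A} h)$, $\mathcal{I}_{\mathcal{B}}=(0:_\mathcal{B} h)$. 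Using only the fact that these are finitely generated ideals and that $\mathcal{A},\mathcal{B}$ are standard graded, one first shows $K(h)_n=0$ for all large $n$, and then that for any $n_2\gg 0$ every $b\in\mathcal{B}_{n_2}$ with $hb=0$ already lies in $\mathcal{A}_{n_2}$, contradicting $x_0\in\mathrm{Ass}_X(\mathcal{B}_{n_2}/\mathcal{A}_{n_2})$. No Hilbert-function machinery is needed.
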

\begin{proof}
First, $x_0 \in \mathrm{Ass}_{X}(\mathcal{B}_n/\mathcal{A}_n)$ if and only if $x_0$ is an associated point of $(\mathcal{B}_n)_{x_0}/(\mathcal{A}_n)_{x_0}$, so we can assume that $X$ is local with closed point $x_0$. Suppose $x_0 \notin \mathrm{Ass}_{X}(\mathcal{B}_n/\mathcal{A}_n)$ for infinitely many $n$. Because  $\bigcup_{i=1}^{\infty}\mathrm{Ass}_{X}(\mathcal{B}_n/\mathcal{A}_n)$ is finite by Prp.\ \ref{finite ass pnts}, then by prime avoidance there exists $h$ from the maximal ideal  $\mathfrak{m}_{x_0}$ such that $ h \notin \mathrm{z.div}(\mathcal{B}_n/\mathcal{A}_n)$ for infinitely many $n$. In other words, consider the map $$\phi_h \colon \mathcal{A}/h\mathcal{A} \longrightarrow \mathcal{B}/h\mathcal{B}$$ and denote its kernel by $K(h)$. Then $K(h)_n =0$ for infinitely many $n$.
Let $a_1, \ldots, a_s$ be generators of $\mathcal{A}/h\mathcal{A}$ as an $R$-algebra and let $k_1, \ldots, k_l$ be generators of $K(h)$ as an ideal in $\mathcal{A}/h\mathcal{A}$. 
Select $n_1$  large enough so that $K(h)_{n_1}=0$. Then $k_{j}a_{i}^{u_{j}(k_j)}=0$ where $u_{j}(k_j)$ is an integer depending on $k_j$ with $\deg (k_j)+u_{i}(k_j)=n_1$. Thus, $K(h)_n=0$ for all large $n$. Therefore, for all large $n$ if $h \in \mathrm{z.div}(\mathcal{B}_n/\mathcal{A}_n)$, then 
there exists $b_n \in \mathcal{B}_n$ with $b_n \not \in \mathcal{A}_n$ such that $hb_n=0$. 

Denote by $\mathcal{I}_{\mathcal{B}}$ and $\mathcal{I}_{\mathcal{A}}$ the homogeneous ideals in $\mathcal{B}$ and $\mathcal{A}$ respectively that are generated by the elements in the corresponding algebras annihilated by $h$. 
Let $f_1, \ldots, f_q$ be homogeneous generators of $\mathcal{I}_{\mathcal{B}}$ and let $b_1, \ldots, b_p$ be degree one  generators of $\mathcal{B}$ as an $R$-algebra. Select $n_1$ large enough such that $h \not \in \mathrm{z.div}(\mathcal{B}_{n_1}/\mathcal{A}_{n_1})$ and $n_1 > \deg{f}_i$ for each $i=1, \ldots, q$. Then for each $i=1, \ldots, q$ there exists a positive integer $l_i$ with $\deg f_i +l_i =n_1$ such that 
\begin{equation}\label{containment}
f_i\mathcal{B}_{l_i} \subset [\mathcal{I}_{\mathcal{A}}]_{n_1} \ \ \text{and} \ \ b_j[\mathcal{I}_\mathcal{A}]_{n_{1}-1} \subset [\mathcal{I}_{A}]_{n_1}. 
\end{equation}
for each $j=1, \ldots, p$. Select $n_2$ such that $h \in \mathrm{z.div}(\mathcal{B}_{n_2}/\mathcal{A}_{n_2})$ and $n_2 \gg n_1$. Then there exists $b_{n_2} \in \mathcal{B}_{n_2}$ with $b_{n_2}\not \in \mathcal{A}_{n_2}$ and $hb_{n_2}=0$. Because $b_{n_2} \in \mathcal{I}_{\mathcal{B}}$ we can write $b_{n_2}$ as a combination of the generators $f_i$ of $\mathcal{I}_{\mathcal{B}}$ with coefficients from $\mathcal{B}$. Then (\ref{containment}) implies that 
$b_{n_2} \in \mathcal{A}_{n_2}$ which is a contradiction. Thus $x_0 \in \mathrm{Ass}_{X}(\mathcal{B}_n/\mathcal{A}_n)$ for $n$ large enough. Finally, we get (\ref{stable}) by Prp.\ \ref{finite ass pnts}. 
\end{proof}

Applying Prp.\ \ref{asymptotics algebras} to $\mathcal{A}$ being the image of $\mathcal{R}(\mathcal{M})$ in $\mathcal{B}:=\mathcal{R}(\mathcal{N})$ we get the following corollary.
\begin{corollary}\label{asymptotics}
In the setup of Cor.\ \ref{finitely many} there exists $n_0$ such that
$$\mathrm{Ass}_X(\mathcal{N}^{n_{0}}/\mathcal{M}^{n_{0}})= \mathrm{Ass}_X(\mathcal{N}^{n_{0}+k}/\mathcal{M}^{n_{0}+k})$$ for any  $k$.
\end{corollary}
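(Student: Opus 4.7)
The plan is to derive Corollary \ref{asymptotics} as a direct specialization of Proposition \ref{asymptotics algebras}. First I would set $\mathcal{B} := \mathcal{R}(\mathcal{N})$ and let $\mathcal{A}$ be the image of $\mathcal{R}(\mathcal{M})$ in $\mathcal{B}$ under the natural homomorphism induced by the inclusion $\mathcal{M} \hookrightarrow \mathcal{N}$. By construction the Rees algebra of a finitely generated module is generated in degree one over $R$ (its degree-one piece is the image of $\mathrm{Sym}(\mathcal{M})_1 = \mathcal{M}$), hence $\mathcal{A}$ and $\mathcal{B}$ are both standard graded $R$-algebras with $\mathcal{A}_0 = \mathcal{B}_0 = R$.

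Next, I would identify graded pieces. By definition, $\mathcal{N}^n = \mathcal{B}_n$ is the $n$th homogeneous component of $\mathcal{R}(\mathcal{N})$, and $\mathcal{M}^n$ is precisely the image of the $n$th homogeneous component of $\mathcal{R}(\mathcal{M})$ in $\mathcal{R}(\mathcal{N})$, which equals $\mathcal{A}_n$. Thus $\mathcal{N}^n/\mathcal{M}^n = \mathcal{B}_n/\mathcal{A}_n$ as $R$-modules, and in particular $\mathrm{Ass}_X(\mathcal{N}^n/\mathcal{M}^n) = \mathrm{Ass}_X(\mathcal{B}_n/\mathcal{A}_n)$ for every $n$.

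With these identifications in place, Proposition \ref{asymptotics algebras} applies verbatim and yields an integer $n_0$ such that
\[
\mathrm{Ass}_X(\mathcal{B}_{n_0}/\mathcal{A}_{n_0}) = \mathrm{Ass}_X(\mathcal{B}_{n_0+k}/\mathcal{A}_{n_0+k})
\]
for every $k \geq 0$, which translates back to $\mathrm{Ass}_X(\mathcal{N}^{n_0}/\mathcal{M}^{n_0}) = \mathrm{Ass}_X(\mathcal{N}^{n_0+k}/\mathcal{M}^{n_0+k})$. The finiteness assertion implicit in the statement is already furnished by Corollary \ref{finitely many} (equivalently Proposition \ref{finite ass pnts} applied to the same pair), so no further work is needed.

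Since the corollary is literally a translation of Proposition \ref{asymptotics algebras} through the dictionary $\mathcal{A}_n \leftrightarrow \mathcal{M}^n$, $\mathcal{B}_n \leftrightarrow \mathcal{N}^n$, there is no substantive obstacle. The only verification to watch for is that $\mathcal{A}$ really is standard graded, i.e.\ that $\mathcal{A}_n$ is generated by products of elements of $\mathcal{A}_1$; this is immediate because $\mathcal{R}(\mathcal{M})$ is a quotient of $\mathrm{Sym}(\mathcal{M})$ and the homomorphism to $\mathcal{R}(\mathcal{N})$ is graded, so $\mathcal{A}_n = \mathcal{A}_1^n$.
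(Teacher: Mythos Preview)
Your proposal is correct and follows exactly the paper's approach: the paper derives the corollary by applying Proposition~\ref{asymptotics algebras} with $\mathcal{B}:=\mathcal{R}(\mathcal{N})$ and $\mathcal{A}$ the image of $\mathcal{R}(\mathcal{M})$ in $\mathcal{B}$. Your additional remarks verifying that $\mathcal{A}$ is standard graded and identifying $\mathcal{A}_n=\mathcal{M}^n$, $\mathcal{B}_n=\mathcal{N}^n$ simply make explicit what the paper leaves implicit.
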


The next result solves the problem about the asymptotic stability of $\mathrm{Ass}_X(\mathcal{N}^{n}/\overline{\mathcal{M}^{n}})$ in great generality. As a special case, we prove that the sequence of sets $\mathrm{Ass}_X(\mathcal{N}^{n}/\overline{\mathcal{M}^{n}})$  is monotonic with respect to inclusion for each $n$. This recovers classical result due to Ratliff (see Thm.\ 2.4 from \cite{Ratliff84} and Prp.\ 6.8.8 in \cite{Huneke} for another proof that uses valuations) in the case when $\mathcal{M}$ is an ideal in $R$ and by Katz and Naude \cite{Katz1} when $\mathcal{N}$ is a free $R$-module. Our proof is based on the Determinantal Trick idea that we used in the proof of Thm.\ \ref{conormal}.

Let $\mathcal{A} \subset \mathcal{B}$ be finitely generated standard graded $R$-algebras. For each positive integer $n$ denote by $\overline{\mathcal{A}_n}$ the integral closure of $\mathcal{A}_n$ in $\mathcal{B}_n$. Set $\mathcal{A}^{\dagger}:= \oplus_{i=0}^{\infty} \overline{\mathcal{A}_i}$ with $\overline{\mathcal{A}_0}:=R$.
\begin{theorem}\label{int. asym.}
Assume that each minimal prime of $\mathcal{B}$ contracts to a minimal prime of $R$. Then for each $n$
$$\mathrm{Ass}_{X}(\mathcal{B}_n/\overline{\mathcal{A}_n}) \subseteq \mathrm{Ass}_{X}(\mathcal{B}_{n+1}/\overline{\mathcal{A}_{n+1}}).$$
Moreover, for $n$  large enough 
$$\mathrm{Ass}_{X}(\mathcal{B}_n/\overline{\mathcal{A}_n}) = \mathrm{Ass}_{X}(\mathcal{B}_{n+1}/\overline{\mathcal{A}_{n+1}}).$$
\end{theorem}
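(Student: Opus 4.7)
The asymptotic equality for $n\gg 0$ is formal given the first inclusion: the ascending chain $\mathrm{Ass}_{X}(\mathcal{B}_n/\overline{\mathcal{A}_n})\subseteq\mathrm{Ass}_{X}(\mathcal{B}_{n+1}/\overline{\mathcal{A}_{n+1}})\subseteq\cdots$ lives inside the finite set $\bigcup_{m}\mathrm{Ass}_{X}(\mathcal{B}_m/\overline{\mathcal{A}_m})$ and therefore stabilizes. Finiteness of this union is the subject of a separate result in this section; the strategy there is to pass to the ideal $I:=\mathcal{A}_+\mathcal{B}\subset\mathcal{B}$ in the Noetherian ring $\mathcal{B}$, observe that $I^n=\mathcal{A}_{\geq n}\mathcal{B}$ and $(\overline{I^n})_n=\overline{\mathcal{A}_n}$, and then invoke the classical finiteness theorem of Rees for integral closures of powers of an ideal. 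I therefore focus on the monotonicity.

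Given $\mathfrak{p}\in\mathrm{Ass}_{X}(\mathcal{B}_n/\overline{\mathcal{A}_n})$, I localize at $\mathfrak{p}$ (integral closure commutes with localization and the hypothesis on minimal primes of $\mathcal{B}$ is preserved) and assume $R$ is local with maximal ideal $\mathfrak{m}=\mathfrak{p}$. Choose a witness $b_n\in\mathcal{B}_n\setminus\overline{\mathcal{A}_n}$ with $(\overline{\mathcal{A}_n}:_R b_n)=\mathfrak{m}$. The plan is to produce $a\in\mathcal{A}_1$ with $ab_n\notin\overline{\mathcal{A}_{n+1}}$; granted this, the standard containment $\mathcal{A}_1\cdot\overline{\mathcal{A}_n}\subseteq\overline{\mathcal{A}_{n+1}}$ (products of elements of $\mathcal{A}_1$ with integral elements remain integral and live in the correct degree) gives $\mathfrak{m}(ab_n)=a\mathfrak{m}b_n\subseteq a\overline{\mathcal{A}_n}\subseteq\overline{\mathcal{A}_{n+1}}$, so the colon $(\overline{\mathcal{A}_{n+1}}:_R ab_n)$ is a proper ideal containing $\mathfrak{m}$, hence equals $\mathfrak{m}$, placing $\mathfrak{p}$ in $\mathrm{Ass}_{X}(\mathcal{B}_{n+1}/\overline{\mathcal{A}_{n+1}})$.

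The key is therefore the existence of $a$. Suppose, for contradiction, that $\mathcal{A}_1 b_n\subseteq\overline{\mathcal{A}_{n+1}}$; iterating with $\mathcal{A}_1\overline{\mathcal{A}_{n+k}}\subseteq\overline{\mathcal{A}_{n+k+1}}$ gives $\mathcal{A}_k b_n\subseteq\overline{\mathcal{A}_{n+k}}$ for every $k\geq 1$, i.e.\ $b_n\mathcal{A}_+\subseteq\mathcal{A}^{\dagger}$. I now follow the Determinantal Trick strategy of the proof of Thm.\ \ref{conormal}: first reduce to the case where $R$ is complete local Nagata and reduced via the analogues of Lemmas \ref{key lemma} and \ref{reduction} (which transfer both the membership in $\mathrm{Ass}_{X}$ and the contraction hypothesis on minimal primes), so that $\mathcal{A}^{\dagger}$ is module-finite over $\mathcal{A}$, is Noetherian, and is integrally closed in $\mathcal{B}$. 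The conductor $J:=\{y\in\mathcal{A}^{\dagger}\colon yb_n\in\mathcal{A}^{\dagger}\}$ is an ideal of $\mathcal{A}^{\dagger}$ containing the finitely generated ideal $\mathcal{A}_+\mathcal{A}^{\dagger}$; the hypothesis that every minimal prime of $\mathcal{B}$ contracts to a minimal prime of $R$, combined with the reducedness of $\mathcal{A}^{\dagger}$, ensures that $\mathcal{A}_+\mathcal{A}^{\dagger}$ avoids the minimal primes of $\mathcal{A}^{\dagger}[b_n]$ and so supplies a non-zero-divisor. Applied to a suitable finitely generated faithful $\mathcal{A}^{\dagger}[b_n]$-module manufactured from $J$ (as in the treatment of $\mathfrak{m}_p$ in the proof of Thm.\ \ref{conormal}), the Determinantal Trick Lemma forces $b_n$ to be integral over $\mathcal{A}^{\dagger}$, so $b_n\in\mathcal{A}^{\dagger}\cap\mathcal{B}_n=\overline{\mathcal{A}_n}$, contradicting the choice of $b_n$. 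The principal obstacle is exactly this construction: the naive two-generator module $\mathcal{A}^{\dagger}+b_n\mathcal{A}^{\dagger}$ is not manifestly stable under multiplication by $b_n$, so one must work with the conductor $J$ and carefully verify both invariance of a module derived from $J$ under $b_n$ and the presence of a non-zero-divisor. The minimal-prime hypothesis on $\mathcal{B}$ plays here the role that reducedness and Nagata-ness play in the proof of Thm.\ \ref{conormal}.
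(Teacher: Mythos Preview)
Your overall architecture---localize, assume $\mathcal{A}_1 b_n\subseteq\overline{\mathcal{A}_{n+1}}$ and seek a contradiction, pass to completion and reduction so that $\mathcal{A}^{\dagger}$ becomes module-finite, then invoke the Determinantal Trick---matches the paper's. But there is a genuine gap at the decisive step: you do not actually construct the $b_n$-stable finitely generated module needed for the trick, and your conductor $J=\{y\in\mathcal{A}^{\dagger}: yb_n\in\mathcal{A}^{\dagger}\}$ is \emph{not} obviously $b_n$-stable. If $y\in J$ then $yb_n\in\mathcal{A}^{\dagger}$, but nothing in your setup gives $yb_n^2\in\mathcal{A}^{\dagger}$. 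The phrase ``a suitable finitely generated faithful $\mathcal{A}^{\dagger}[b_n]$-module manufactured from $J$'' is exactly the missing idea, not a routine verification.

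The paper fills this gap with a short direct computation you have not found. Take instead $J=\bigoplus_{i\ge 1}\overline{\mathcal{A}_i}$, the irrelevant ideal of $\mathcal{A}^{\dagger}$, and prove $bJ\subseteq J$ by hand: for homogeneous $c\in\overline{\mathcal{A}_m}$ with $m\ge 1$, write an integral equation $c^{k}+\alpha_1 c^{k-1}+\cdots+\alpha_k=0$ with $\alpha_i\in\mathcal{A}_{mi}$ and multiply through by $b^{k}$. Since $mi\ge i$, each $b^{i}\alpha_i$ is a sum of products of $i$ factors of the form $ba_j\in\overline{\mathcal{A}_{n+1}}\subset\mathcal{A}^{\dagger}$ (using $b\mathcal{A}_1\subseteq\overline{\mathcal{A}_{n+1}}$) times $mi-i$ leftover elements of $\mathcal{A}_1$, hence lies in $\mathcal{A}^{\dagger}$. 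Thus $bc$ is integral over $\mathcal{A}^{\dagger}$ and therefore $bc\in\mathcal{A}^{\dagger}$; this gives $bJ\subseteq J$ outright. Two further points your sketch elides: the case where $\mathfrak{m}_{x_0}$ is contained in a minimal prime of $\mathcal{B}$ (forcing $R$ Artinian) must be treated separately before the main argument; and after Cayley--Hamilton yields $g(b)\cdot\mathring{J}=0$ with $g$ monic, one still needs a careful case analysis on whether $\mathring{J}$ contains a nonzerodivisor of $\mathring{\mathcal{B}}_{\mathrm{red}}$ to conclude $g(b)=0$---your assertion that $\mathcal{A}_{+}\mathcal{A}^{\dagger}$ automatically supplies a nonzerodivisor of $\mathcal{A}^{\dagger}[b_n]$ is too quick, and this is precisely where the minimal-prime hypothesis on $\mathcal{B}$ is used.
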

\begin{proof} 
First, we prove that $\bigcup_{i=1}^{\infty}\mathrm{Ass}_{X}(\mathcal{B}_n/\overline{\mathcal{A}_n})$ is finite following verbatim the argument in the first paragraph of the proof of Thm.\ 2.1 in \cite{Katz1}. Let $I$ be the ideal in $\mathcal{B}$ generated
by $\mathcal{A}_1$. By degree considerations for each $b \in \mathcal{B}_n$ we have: $b$ is in $\overline{\mathcal{A}_n}$ if and
only if $b$ is in $\overline{I^n}$. Then to prove that $\bigcup_{i=1}^{\infty}\mathrm{Ass}_{X}(\mathcal{B}_n/\overline{\mathcal{A}_n})$ is finite one needs to verify that
$\mathrm{Ass}_{\mathcal{B}}(\mathcal{B}/\overline{I^n})$ is finite which follows from the works of Rees \cite{Rees81} and Ratliff \cite{Ratliff84}. 

Next, assume that for some positive integer $n$ we have $x_0 \in \mathrm{Ass}_{X}(\mathcal{B}_n/\overline{\mathcal{A}_n})$ but $x_0 \not \in \mathrm{Ass}_{X}(\mathcal{B}_{n+1}/\overline{\mathcal{A}_{n+1}})$. Replace $X$ by $\mathrm{Spec}(\mathcal{O}_{X,x_0})$ and for each $l$ replace $\mathcal{B}_l$ and $\mathcal{A}_l$ by their stalks at $x_0$. Denote the ideal of $x_0$ by $\mathfrak{m}_{x_0}$. 

Assume $\mathfrak{m}_{x_0}$ is contained in a minimal prime of $\mathcal{B}$. By hypothesis each minimal prime of $\mathcal{B}$ contracts to a minimal prime of $\mathcal{O}_{X,x_0}$. Therefore, $\mathcal{O}_{X,x_0}$ is Artinian local ring. By assumption $x_0 \not \in \mathrm{Ass}_{X}(\mathcal{B}_{n+1}/\overline{\mathcal{A}_{n+1}})$. Because $x_0$ is the only point of $X$, then $\mathcal{B}_{n+1}=\overline{\mathcal{A}_{n+1}}$ which implies that $\mathcal{B}_1$ is integral over $\mathcal{A}$, so $\mathcal{B}=\mathcal{A}^{\dagger}$. In particular, $\mathcal{B}_n=\overline{\mathcal{A}_n}$ contradicting our assumption that $x_0 \in \mathrm{Ass}_{X}(\mathcal{B}_n/\overline{\mathcal{A}_n})$.

From now on, suppose that no minimal prime of $\mathcal{B}$ contains $\mathfrak{m}_{x_0}$. Let $b \in \mathcal{B}_n$ such that $\mathfrak{m}_{x_0}b \in \overline{\mathcal{A}_n}$ and $b \not \in \overline{\mathcal{A}_n}$. Observe that $b\mathcal{A}_{1} \in \mathcal{B}_{n+1}$ and $\mathfrak{m}_{x_0}b\mathcal{A}_{1} \in \overline{\mathcal{A}_{n+1}}$. But $x_0 \not \in \mathrm{Ass}_{X}(\mathcal{B}_{n+1}/\overline{\mathcal{A}_{n+1}})$ by assumption. Hence $b\mathcal{A}_{1} \in \overline{\mathcal{A}_{n+1}}$. Because $\mathcal{A}$ is generated as an $\mathcal{O}_{X,x_0}$-algebra by $\mathcal{A}_1$ we get $b\mathcal{A} \in \mathcal{A}^{\dagger}$. 

Let $c \in \mathcal{A}^{\dagger}$. We claim that $bc \in \mathcal{A}^{\dagger}$. Indeed, $c$ satisfies an equation of integral dependence over $\mathcal{A}:$
$$c^{k}+\alpha_1c^{k-1}+\cdots +\alpha_k=0$$
where $\alpha_i \in \mathcal{A}$. Multiply both sides of the last equation by $b^{k}$ to get
$$(bc)^{k}+b\alpha_1(bc)^{k-1}+\cdots+b^{k}\alpha_k=0.$$
Without loss of generality assume that $c$ is homogeneous. Note that $b\mathcal{A}_1 \in \mathcal{A}^{\dagger}$ and each $\alpha_i$ belongs to $\mathcal{A}_{l(i)}$ for some $l(i) \geq i$. Thus $b^{i}\alpha_i \in \mathcal{A}^{\dagger}$. Because $\mathcal{A}^{\dagger}$ is integrally closed in $\mathcal{B}$ we get $bc \in \mathcal{A}^{\dagger}$. Set $J:= \oplus_{i=1}^{\infty}\overline{\mathcal{A}_i}$. Then
\begin{equation}\label{det. trick}
bJ \subseteq J.
\end{equation}
Denote by $\widehat{\mathcal{O}_{X,x_0}}$ the completion of $\mathcal{O}_{X,x_0}$ with respect to $x_0$. Set 
$$\mathring{\mathcal{A}}=\mathcal{A} \otimes_{\mathcal{O}_{X,x_0}} \widehat{\mathcal{O}_{X,x_0}} \ \text{and} \ \mathring{\mathcal{B}}=\mathcal{B} \otimes_{\mathcal{O}_{X,x_0}} \widehat{\mathcal{O}_{X,x_0}}.$$
Denote by $\mathring{\mathcal{A}}_{\mathrm{red}}$ and $\mathring{\mathcal{B}}_{\mathrm{red}}$  the reductions of $\mathring{\mathcal{A}}$ and $\mathring{\mathcal{B}}$, respectively. Denote by $\mathcal{C}$ the integral closure of $\mathring{\mathcal{A}}_{\mathrm{red}}$ in $\mathring{\mathcal{B}}_{\mathrm{red}}$. Finally, denote by $\mathring{J}$ the image of $J$ in $\mathring{\mathcal{B}}_{\mathrm{red}}$ and identify $b$ with its image in $\mathring{\mathcal{B}}_{\mathrm{red}}$. Because $\mathring{\mathcal{A}}_{\mathrm{red}}$ is finitely generated algebra over complete local ring, then by \cite[\href{http://stacks.math.columbia.edu/tag/0335}{Tag 0335}]{Stacks} it follows that it is Nagata ring (see also Ex.\ 9.7 in \cite{Huneke}). Because $\mathring{\mathcal{B}}_{\mathrm{red}}$ is reduced and finitely generated algebra over $\mathring{\mathcal{A}}_{\mathrm{red}}$, then by \cite[\href{http://stacks.math.columbia.edu/tag/03GH}{Tag 03GH}]{Stacks} or simply by the definition of a Nagata ring (cf.\ Ex.\ 9.6 in \cite{Huneke}), it follows that $\mathcal{C}$ is finitely generated. But $\mathring{J}$ is an ideal in $\mathcal{C}$, so $\mathring{J}$ is a finitely generated $\mathcal{C}$-module.

By (\ref{det. trick}) we have $b\mathring{J} \subseteq \mathring{J}$. Therefore, by the Determinantal Trick Lemma (see Lem.\ 2.1.8 from \cite{Huneke}) there exists a monic polynomial $g(b)$ in $b$ with coefficients in $\mathcal{C}$ such that 
\begin{equation}\label{integral containment}
g(b)\mathring{J}=0. 
\end{equation}

Suppose $\mathring{J}=0$. Then $J = \mathrm{nil}(\mathcal{B}) \cap \oplus_{i>0}\mathcal{B}_{i}$. Because $\mathfrak{m}_{x_0}b \in \overline{\mathcal{A}_n}$, then for each $h \in \mathfrak{m}_{x_0}$ there exists $u$ such that $(hb)^{u}=0$. But $b$ is not a nilpotent, so there exists a minimal prime of $\mathcal{B}$ that does not contain it. Thus $\mathfrak{m}_{x_0}$ is contained in a minimal prime of $\mathcal{B}$ which contradicts our assumptions.

Suppose $\mathring{J}$ is nonzero. If $\mathring{J}$ contains a nonzero divisor in  $\mathring{\mathcal{B}}_{\mathrm{red}}$, then (\ref{integral containment}) implies that $g(b)=0$. Hence $b$ is integral over $\mathcal{C}$. But $\mathcal{C}$ is integrally closed. So $b \in \mathcal{C}$. Suppose $\mathring{J}$ consists of zero divisors in $\mathring{\mathcal{B}}_{\mathrm{red}}$. Then $\mathring{J}$ is contained in some of the minimal primes of $\mathring{\mathcal{B}}_{\mathrm{red}}$ but not in others. Because of (\ref{integral containment}) $g(b)$ is contained in all minimal primes that do not contain $\mathring{J}$. Select $h \in \mathfrak{m}_{x_0}$ that avoids the minimal primes of $\mathring{\mathcal{B}}_{\mathrm{red}}$ and such that $hg(b) \in \mathring{J}$. Then $g(b)$ is contained in the minimal primes of $\mathring{\mathcal{B}}_{\mathrm{red}}$ that contain $\mathring{J}$. Thus, $g(b)$ is contained in the intersection of all minimal primes of  $\mathring{\mathcal{B}}_{\mathrm{red}}$. Therefore, $g(b)=0$ and once again $b \in \mathcal{C}$. 

By Lem.\ \ref{reduction} \rm{(1)} it follows that $b$ is in the integral closure of $\mathring{\mathcal{A}}$ in $\mathring{\mathcal{B}}$. More precisely, $b \in \overline{\mathring{\mathcal{A}_n}}$. As in the proof of Lem.\ \ref{key lemma} $\rm{(1)}$ by faithful flatness we have $\overline{\mathring{\mathcal{A}_n}} \cap \mathcal{B}_n=\overline{\mathcal{A}_n}.$ Hence $b \in \overline{\mathcal{A}_n}$ which is a contradiction.

Finally, we get $x_0 \in \mathrm{Ass}_{X}(\mathcal{B}_{n+1}/\overline{\mathcal{A}_{n+1}})$. Combining the fact that $\bigcup_{i=1}^{\infty}\mathrm{Ass}_{X}(\mathcal{B}_n/\overline{\mathcal{A}_n})$ is finite with the monotonicity property we just proved we get $\mathrm{Ass}_{X}(\mathcal{B}_n/\overline{\mathcal{A}_n}) = \mathrm{Ass}_{X}(\mathcal{B}_{n+1}/\overline{\mathcal{A}_{n+1}})$  for $n$ large enough. The proof of the theorem is now complete.
\end{proof}
\begin{remark}
\rm To prove the monotonicity part of the theorem one might proceed with reducing to the ideal case as Katz and Naude did in their Thm.\ 2.1 for the case when $\mathcal{B}$ is
the symmetric algebra of a free module and $\mathcal{A}$ is the Rees algebra of a module. In our approach we immediately obtain the key inclusion (\ref{det. trick}) which implies that either $b$ satisfies an equation of integral dependence or $J$ is not faithful. In either case we get easily a contradiction with our assumptions. The only technicality is to pass to the case when $J$ is finitely generated. That requires passing through the completion and using faithful flatness as we already did in Lem.\ \ref{key lemma}. 

Another approach to the ideal case that uses valuations can be found in Prp.\ 6.8.8 in \cite{Huneke}.
\end{remark}
\begin{corollary}\label{ass. int. asym.} Assume $\mathcal{M} \subset \mathcal{N}$ are finitely generated $R$-modules. Then for each $n$
$$\mathrm{Ass}_X(\mathcal{N}^{n}/\overline{\mathcal{M}^{n}}) \subseteq \mathrm{Ass}_X(\mathcal{N}^{n+1}/\overline{\mathcal{M}^{n+1}}).$$
Furthermore, there exists $n_0$ such that
$$\mathrm{Ass}_X(\mathcal{N}^{n_{0}}/\overline{\mathcal{M}^{n_{0}}})= \mathrm{Ass}_X(\mathcal{N}^{n_{0}+k}/\overline{\mathcal{M}^{n_{0}+k}})$$
for any $k$.
\end{corollary}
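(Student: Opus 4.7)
Set $\mathcal{B} := \mathcal{R}(\mathcal{N})$ and let $\mathcal{A}$ be the image of $\mathcal{R}(\mathcal{M})$ in $\mathcal{B}$. Then $\mathcal{A} \subset \mathcal{B}$ are finitely generated standard graded $R$-algebras, with $\mathcal{A}_n = \mathcal{M}^{n}$ and $\mathcal{B}_n = \mathcal{N}^{n}$ for each $n$, and the integral closure of $\mathcal{A}_n$ in $\mathcal{B}_n$ equals $\overline{\mathcal{M}^{n}}$ by the very definition adopted before Lem.\ \ref{reduction}.

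The plan is to invoke Thm.\ \ref{int. asym.}, so we only need to check that each minimal prime of $\mathcal{B}$ contracts to a minimal prime of $R$. By Prp.\ 1.5 of \cite{Eisenbud} (already used in the proof of Prp.\ \ref{structure morphism} {\rm(1)}), contraction gives an inclusion-preserving bijection between the associated primes of $\mathcal{R}(\mathcal{N})$ and those of $R$; in particular, every minimal prime of $\mathcal{B}$ contracts to a minimal prime of $R$, as required. Therefore Thm.\ \ref{int. asym.} applies and yields the monotonicity
$$\mathrm{Ass}_X(\mathcal{N}^{n}/\overline{\mathcal{M}^{n}}) \subseteq \mathrm{Ass}_X(\mathcal{N}^{n+1}/\overline{\mathcal{M}^{n+1}})$$
for every $n$, together with the existence of an integer $n_0$ such that equality holds for all $n \ge n_0$, which is exactly the asymptotic stability claimed.
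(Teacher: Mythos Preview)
Your proof is correct and follows essentially the same approach as the paper: set $\mathcal{B}=\mathcal{R}(\mathcal{N})$, take $\mathcal{A}$ to be the image of $\mathcal{R}(\mathcal{M})$ in $\mathcal{B}$, verify via Prp.~1.5 of \cite{Eisenbud} (equivalently, Prp.~\ref{structure morphism}~{\rm(1)}) that minimal primes of $\mathcal{B}$ contract to minimal primes of $R$, and then invoke Thm.~\ref{int. asym.}. The only cosmetic difference is that the paper cites Prp.~\ref{structure morphism}~{\rm(1)} directly rather than the underlying result from \cite{Eisenbud}.
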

\begin{proof}
Set $\mathcal{B}:=\mathcal{R}(\mathcal{N})$ and let $\mathcal{A}$ be the image of $\mathcal{R}(\mathcal{M})$ in $\mathcal{B}$. Prp.\ \ref{structure morphism} \rm{(1)} applied for $\mathrm{Proj}(\mathcal{B})$ implies that $c_{\mathcal{B}}$ maps each irreducible component of $\mathrm{Proj}(\mathcal{B})$ to an irreducible component of $X$. Then the statement of the corollary follows directly from Thm.\ \ref{int. asym.}.
\end{proof}

Denote by $X_1, \ldots, X_r$ the irreducible components of $X$, and for each $i$ denote by $\eta_i$ the generic point of $X_i$. Let $\mathcal{M} \subset \mathcal{N}$ be a pair of $R$-modules. Assume either 
\begin{enumerate}
\item[(1)] $\mathcal{M}_{\eta_i}$ and $\mathcal{N}_{\eta_i}$ are free $\mathcal{O}_{X,\eta_i}$-modules of ranks $e_i$ and $p_i$,
or
\item[(2)] $\mathcal{M}$ and $\mathcal{N}$ are contained in a free $R$-module $\mathcal{F}$.
\end{enumerate}
As usual set $B:= \mathrm{Proj}(\mathcal{R}(\mathcal{M}))$ and denote by $b$ the structure morphism $b \colon B \rightarrow X$. Let $B_i$ be the irreducible component of $B$ that surjects onto $X_i$. Denote by $b_i$ the restriction of $b$ to $B_i$. Let $S_{i}(k)$ be the closed subschemes of $X$ consisting of all points $x \in X_{i}$ such that $b_{i}^{-1}x \geq k$. Finally, define $Z$ to be the nonfree locus of $\mathcal{N}/\mathcal{M}$. 

\begin{theorem}\label{ass points-int.cl.} The following hold:
\begin{itemize}
\item[(1)] If $p_i>e_i$, then $\eta_i \in \mathrm{Ass}_{X}(\mathcal{N}^{n}/\overline{\mathcal{M}^{n}})$ for each $n$. If $p_i=e_i$, then $\eta_i \not \in \mathrm{Ass}_{X}(\mathcal{N}^{n}/\overline{\mathcal{M}^{n}})$ for each $n$.
\item[(2)] Suppose $x \neq \eta_i$ for each $i$. Assume  
$x \in \mathrm{Ass}_{X}(\mathcal{N}^{n}/\overline{\mathcal{M}^{n}})$. Then $x \in Z$. Additionally, if $\codim(\overline{\{x\}},X_i)=1$ for some $X_i$, then $x$  is the generic point of
an irreducible component of $Z$.
\end{itemize}
If $\codim(\overline{\{x\}},X_i)>1$ for some $i$ and $x \in \mathrm{Ass}_{X}(\mathcal{N}^{n}/\overline{\mathcal{M}^{n}})$ for some $n$, and $X$ is universally catenary, then $x$ is the generic point of an irreducible component of $S_{i}(\dim X_{i},x+e_{i}-2)$. In particular, the set $\bigcup_{n=1}^{\infty}\mathrm{Ass}_{X}(\mathcal{N}^{n}/\overline{\mathcal{M}^{n}})$ is finite.
\end{theorem}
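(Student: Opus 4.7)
The plan is to reduce the statement to a single application of Corollary~\ref{McAdam} and then reuse the fiber-dimension bookkeeping from the proof of Theorem~\ref{vertical}. Let $x \in \mathrm{Ass}_{X}(\mathcal{N}^{n}/\overline{\mathcal{M}^{n}})$ with $\codim(\overline{\{x\}},X_{i})>1$ for some $i$. First I would replace $X$ by its localization $\mathrm{Spec}(\mathcal{O}_{X,x})$; since integral closure commutes with localization, $\overline{\mathcal{M}^{n}}_{x}=\overline{\mathcal{M}^{n}_{x}}$, so $x$ remains an associated point and the base is now local universally catenary with closed point $x$. Corollary~\ref{McAdam} then produces an irreducible component $D_{j}$ of $b^{-1}(x)$ with $\codim D_{j}\le 1$. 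By Proposition~\ref{structure morphism}~(3), $\codim(D_{j},B_{k})=\dim B_{k}-\dim D_{j}\ge 1$ for each $B_{k}\supset D_{j}$, and $\codim D_{j}$ equals their minimum, so one can choose a $B_{i}$ containing $D_{j}$ with $\dim D_{j}=\dim B_{i}-1$.

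Next I would translate dimensions. Proposition~\ref{structure morphism}~(2) gives $\dim B_{i}=\dim X_{i}+e_{i}-1$, and in the localized setting $\dim X_{i}=\dim\mathcal{O}_{X_{i},x}=\codim(\overline{\{x\}},X_{i})$, the last equality by universal catenarity. Set $k_{i}:=\codim(\overline{\{x\}},X_{i})+e_{i}-2$, so $\dim D_{j}=k_{i}$, and $D_{j}\subset b_{i}^{-1}(x)$ yields $x\in S_{i}(k_{i})$. If the extracted index $i$ satisfies $\codim(\overline{\{x\}},X_{i})\ge 2$, then $k_{i}\ge c_{i}+1$ with $c_{i}:=e_{i}-1$ the generic fiber dimension of $b_{i}$, so $S_{i}(k_{i})\subsetneq X_{i}$. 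The fiber-dimension argument from the proof of Theorem~\ref{vertical}, applied to any component $W$ of $b_{i}^{-1}(S_{i}(k_{i}))$ surjecting onto a component $T$ of $S_{i}(k_{i})$, then gives $\dim T+k_{i}\le\dim W\le\dim B_{i}-1=k_{i}$, so $\dim S_{i}(k_{i})\le 0$. Thus in the localized $X$ the closed set $S_{i}(k_{i})$ is supported on $x$, i.e., $\overline{\{x\}}$ is an irreducible component of $S_{i}(k_{i})$.

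For the finiteness conclusion I would combine the three cases of the theorem: part~(1) contributes the finitely many generic points $\eta_{i}$; for $x$ distinct from all $\eta_{i}$, the inclusion $x\in X_{i}$ forces $\codim(\overline{\{x\}},X_{i})\ge 1$, so by part~(2), if some such codimension equals~$1$ then $x$ is a generic point of a component of $Z$, while by the argument above, if some such codimension exceeds~$1$ then $x$ is a generic point of a component of some $S_{i}(k_{i})$. Noetherianity of $X$ and Lemma~\ref{S} together ensure that $Z$ has finitely many irreducible components, that only finitely many $S_{i}(k)$ are nonempty, and that each such $S_{i}(k)$ has finitely many components, so $\bigcup_{n=1}^{\infty}\mathrm{Ass}_{X}(\mathcal{N}^{n}/\overline{\mathcal{M}^{n}})$ is finite.

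The main obstacle I anticipate is aligning the index $i$ produced by Corollary~\ref{McAdam} with one satisfying the hypothesis $\codim(\overline{\{x\}},X_{i})>1$. If the component $B_{i}$ that emerges from the Corollary~\ref{McAdam} step surjects onto an $X_{i}$ of codimension one at $x$, then $k_{i}=c_{i}$ and the conclusion about $S_{i}(k_{i})$ degenerates to the tautology $S_{i}(c_{i})=X_{i}$; this is harmless for the finiteness statement, since part~(2) then still places $x$ among the finitely many generic points of components of $Z$, but obtaining the pointed claim of the theorem for the same $i$ as in the hypothesis would require a more careful selection of $D_{j}$ at the outset.
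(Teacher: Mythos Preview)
Your proposal addresses only the final paragraph of the theorem (the case $\codim(\overline{\{x\}},X_i)>1$) and the finiteness conclusion; parts~(1) and~(2) are used but not proven. These require separate, self-contained arguments that do not go through Corollary~\ref{McAdam}: the paper first passes to reduced structures via Lemma~\ref{reduction}, then for~(1) computes $(\mathcal{N}^n/\overline{\mathcal{M}^n})_{\eta_i}\simeq\mathcal{O}_{X,\eta_i}^{p_i(n)-e_i(n)}$ directly, and for~(2) shows that if $x\notin Z$ then $\mathcal{N}^n_x/\overline{\mathcal{M}^n_x}$ is free, forcing $x$ to be an embedded point of the reduced scheme $X$, a contradiction. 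None of this is captured by ``a single application of Corollary~\ref{McAdam}.''

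For the codimension-$>1$ clause your argument is correct and essentially the paper's: localize at $x$, apply Corollary~\ref{McAdam} to produce $D_j$ with $\codim D_j=1$, pick $B_i\supset D_j$ realizing this, and conclude $\dim D_j=\dim X_i,x+e_i-2=:k_i$. The paper then argues pointwise, using the bound $\dim b_i^{-1}y\le\dim X_i,y+e_i-2$ from Proposition~\ref{structure morphism}~(3), that no proper generization $y$ of $x$ can lie in $S_i(k_i)$; you instead invoke the global inequality $\dim S_i(k)\le\dim B_i-k-1$ from the proof of Theorem~\ref{vertical}. These are the same content repackaged, and both give that $x$ is a generic point of a component of $S_i(k_i)$ once the extracted $i$ satisfies $\dim X_i,x\ge 2$.

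Your closing observation about index alignment is on target, and in fact the paper's own proof glosses over the same point: when it writes ``$\dim X_i,x=1$ contradicts with our assumption,'' the $i$ in play is the one produced by Corollary~\ref{McAdam}, not necessarily the $i$ from the hypothesis. The honest reading of both the statement and the proof is existential in $i$: there exists \emph{some} index (the one McAdam hands you) for which $x$ is generic in $S_i(k_i)$, provided that index has $\dim X_i,x\ge 2$; if instead it has $\dim X_i,x=1$, part~(2) already places $x$ among the generic points of components of $Z$. Either way the finiteness follows, exactly as you outline.
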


\begin{proof}
By Lem.\ \ref{reduction} we can pass to the reductions of $X$, $\mathcal{R}(\mathcal{M})$ and $\mathcal{R}(\mathcal{N})$.

Consider $\rm{(1)}$. If $p_i>e_i$, then $(\mathcal{N}^{n}/\overline{\mathcal{M}^{n}})_{\eta_i} \simeq (\mathcal{O}_{X,\eta_i})^{l_i(n)}$ where $l_{i}(n): = p_{i}(n)-e_{i}(n)$ is positive. 
Indeed, the integral closure of $(\mathcal{O}_{X,\eta_i})^{e_i(n)}$ in 
$(\mathcal{O}_{X,\eta_i})^{p_i(n)}$ is  $(\mathcal{O}_{X,\eta_i})^{e_i(n)}$ because $X$ is reduced and the former module is a direct summand of the latter as observed in Prp.\ \ref{support}. 
If $p_i=e_i$, then $\mathcal{N}_{\eta_{i}}^{n}=\overline{\mathcal{M}_{\eta_i}^{n}}$ for each $n$. This proves $\rm{(1)}$. 

Consider $\rm{(2)}$. Suppose that $x \not \in Z$. Then the module $\mathcal{N}_{x}^{n}/\overline{\mathcal{M}_{x}^{n}}$ equals to the quotient of $(\mathcal{O}_{X,x})^{l_i(n)}/\mathcal{L}_n$ for some $\mathcal{L}_n$. However, locally at each $\eta_j$ such that $\overline{\{x\}} \subset \overline{\{\eta_j\}}$ we have $\mathcal{N}_{\eta_j}^{n}/\overline{\mathcal{M}_{\eta_j}^{n}}\simeq (\mathcal{O}_{X,\eta_j})^{l_i(n)}$. Therefore, because $X,x$ is reduced and $\mathcal{L}_n$ is contained in a free $\mathcal{O}_{X,x}$-module, then $\mathcal{L}_n=0$. So $\mathcal{N}_{x}^{n}/\overline{\mathcal{M}_{x}^{n}}=(\mathcal{O}_{X,x})^{l_i(n)}$. Because $x \neq \eta_i$ for each $i$, then $x$ must be an embedded point of $X$. But this is impossible because $X$ is reduced. Therefore, $x \in Z$. In the proof of Prp.\ \ref{support} we showed that $Z$ does not contain $\eta_i$. Thus  $x$ is the generic point of an irreducible component of $Z$.

Next, suppose that  $\codim(\overline{\{x\}},X_i)>1$ for some $i$ and $x \in \mathrm{Ass}_{X}(\mathcal{N}^{n}/\overline{\mathcal{M}^{n}})$ for some $n$. By Thm.\ \ref{conormal} and Prp.\ \ref{structure morphism} \rm{(2)} and \rm{(3)} there exists $i$ such that $\dim b_{i}^{-1}x = \dim X_{i},x+e_i-2$. Suppose there exists $y \in X_i$ such that $\overline{\{x\}} \subset \overline{\{y\}}$ and $y \in S_{i}(\dim X_{i},x+e_i-2)$. Note that $y$ can not be the generic point of $X_i$ for otherwise $y \in S_i(e_i-1)$ and hence $\dim X_i,x=1$ which contradicts with our assumption. Thus $X_i,y$ is of positive dimension. By Prp.\ \ref{structure morphism} \rm{(3)} $$\dim b_{i}^{-1}y \leq \dim X_i,y+e_i-2.$$ Therefore, we must have $\dim X_i,y \geq \dim X_i,x$. But this is impossible because $\overline{\{x\}} \subset \overline{\{y\}}$ and hence $\dim X_{i},x > \dim X_{i},y$. This proves that $x$ is the generic point of an irreducible component of $S_{i}(\dim X_{i},x+ e_{i}-2)$ for some $i$. The sets $S_{i}(k)$ are  closed in $X$ by Chevalley's theorem and finitely many because $B$ is of finite type over a Noetherian scheme; hence $B$ is Noetherian. Therefore, $\bigcup_{n=1}^{\infty}\mathrm{Ass}_{X}(\mathcal{N }^{n}/\overline{\mathcal{M}^{n}})$ is a finite set.
\end{proof}

\section{A converse to the main result}\label{converse results}
Adopt the setup from the beginning of Sct.\ \ref{main results}. Our principal goal is to prove converses of Thm.\ \ref{conormal} and Thm.\ \ref{ass points-int.cl.} without requiring $X$ to be universally catenary. However, a quick inspection reveals that the module $\mathcal{N}$ has to be of particular type for such converses to hold. 
\begin{definition}
We say that $\mathcal{N}$ has {\it deficient analytic spread} at $x \in X$ if  each irreducible component of the fiber of $\mathrm{Proj}(\mathcal{R}(\mathcal{N}_x))$ over $x$ is of codimension at least $2$ in $\mathrm{Proj}(\mathcal{R}(\mathcal{N}_x))$.
\end{definition}
Note that if $\dim X,x \geq 2$, then every free $R$-module has deficient analytic spread at $x$. When $X,x_0$ is universally catenary, irreducible and local with closed point $x_0$, $\mathcal{N}$ has deficient analytic spread (at $x_0$) if and only if $\mathcal{N}$ does not have the maximal possible analytic spread (see Cor.\ 16.4.7 in \cite{Huneke} for definition of analytic spread and Rmk.\ \ref{def. vs. anal.} below). Deficient analytic spread implies nonmaximal analytic spread, but in general without additional hypothesis on $X$ the converse may not be true. 
Combining the results of this section with those from previous sections we will be able to classify the sets $\mathrm{Ass}_{X}(\mathcal{N}^{n}/\overline{\mathcal{M}^n})$ for modules $\mathcal{N}$ which have deficient analytic spread. 


We begin with Thm.\ \ref{converse conormal} which is the main result of this section. It is a fairly general statement about finitely generated graded algebras. As a first application we prove the converse to Thm.\ \ref{conormal} assuming that $\mathcal{N}$ has deficient analytic spread. As in Thm.\ \ref{conormal}  $\dim X \geq 2$ is assumed implicitly.  In the case when $X,x_0$ is of dimension one and $\mathcal{N}:=\mathcal{F}$ is free, we prove in Prp.\ \ref{curve} that if $x_0 \notin \mathrm{Ass}_{X}(\mathcal{F}/\overline{\mathcal{M}})$, then $\mathcal{M}$ and $\mathcal{F}$ share a free direct summand of maximal rank. It's a result that we will use on several occasions later in the section. As a first application of  Thm.\ \ref{converse conormal} we prove in Cor.\ \ref{converse to KT} a partial converse to the Kleiman--Thorup theorem, which can be viewed as a criterion for nonintegrality of modules.

Another important application of the main result of this section is Thm.\ \ref{compl. desc.}: we prove a converse to Thm.\ \ref{ass points-int.cl.} assuming that $\mathcal{N}$ has deficient analytic spread at each point $x \in X$ with $\dim X,x \geq 2$. Our result generalizes results by Katz and Rice (see Thm.\ 3.5.1 in \cite{Katz2}) who assumed that $\mathcal{N}:=\mathcal{F}$ is free and $\mathrm{rk}(\mathcal{F})=e_i$ for each $i=1, \ldots, r$, where $e_i$ is the rank of $\mathcal{M}$ at the generic point of $X_i$, and by  Rees \cite{Rees} who assumed additionally that $R$ is locally quasi-unmixed. Furthermore, we will recover a result by Burch \cite{Burch} in the ideal case (cf.\ Thm.\ 5.4.7 in \cite{Huneke}) which is not recovered by the results of Rees, Katz and Rice. We conclude this section with an example arising  from geometry for which $\mathcal{M}$ has deficient analytic spread and $\mathrm{Ass}_{X}(\mathcal{F}^{n}/\overline{\mathcal{M}^{n}})$ consists entirely of the generic points of the irreducible components of $X$. This class of examples arises from projective varieties with deficient duals and motivates our coining of the term ``deficient analytic spread." 


Let $X$ be a local Noetherian scheme with closed point $x_0$ and let $\mathcal{A} \subset \mathcal{B}$ be finitely generated $\mathbb{N}_0$-graded $\mathcal{O}_{X,x_0}$-algebras. Denote their graded pieces by $\mathcal{A}_n$ and $\mathcal{B}_n$, respectively. Assume $\mathcal{A}_0=\mathcal{B}_0=\mathcal{O}_{X,x_0}$. Denote by $\overline{\mathcal{A}_n}$ the integral closure of $\mathcal{A}_n$ in $\mathcal{B}_n$. Set $\mathcal{A}^{\dagger}:=\bigoplus_{i=0}^{\infty}\overline{\mathcal{A}_i}$ with $\overline{\mathcal{A}_0}:=\mathcal{O}_{X,x_0}$. Denote by $\mathfrak{m}_{x_0}$ the maximal ideal of $\mathcal{O}_{X,x_0}$.
\begin{definition}
Say that $\mathcal{B}$ has {\it deficient analytic spread} if the codimension (height) of each minimal prime ideal of $\mathfrak{m}_{x_0}\mathcal{B}$ is at 
least $2$. 
\end{definition}
Note that because $\mathfrak{m}_{x_0}\mathcal{B}$ is homogeneous, each of its minimal primes has the same codimension in $\mathrm{Spec}(\mathcal{B})$ as in $\mathrm{Proj}(\mathcal{B})$. \begin{theorem}\label{converse conormal}
Suppose that $\bigcup_{n=1}^{\infty}\mathrm{Ass}_{X}(\mathcal{B}_n/\overline{\mathcal{A}_n})$ is finite and $x_0 \notin \mathrm{Ass}_{X}(\mathcal{B}_n/\overline{\mathcal{A}_n})$ for $n$ large enough. If $\mathcal{B}$ has deficient analytic spread, then $\mathcal{A}$ has deficient analytic spread.  
\end{theorem}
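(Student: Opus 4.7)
The plan is to argue by contrapositive: assume $\mathcal{A}$ does not have deficient analytic spread, so there is a minimal prime $\mathfrak{p}$ of $\mathfrak{m}_{x_0}\mathcal{A}$ with $\mathrm{ht}(\mathfrak{p}) \leq 1$, and derive from this a minimal prime of $\mathfrak{m}_{x_0}\mathcal{B}$ of height at most $1$, contradicting the hypothesis on $\mathcal{B}$.

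First I would perform reductions in the spirit of Lem.\ \ref{key lemma} and Lem.\ \ref{reduction}: complete $R$ along $\mathfrak{m}_{x_0}$ and pass to the reduced structures on $\mathcal{A}$ and $\mathcal{B}$. Both hypotheses (finiteness of $\bigcup_n \mathrm{Ass}_X(\mathcal{B}_n/\overline{\mathcal{A}_n})$ and $x_0 \notin \mathrm{Ass}$ for large $n$), as well as deficient analytic spread of $\mathcal{B}$ and its failure for $\mathcal{A}$, are preserved, since completion is faithfully flat and passage to the reduced quotient induces a height-preserving bijection on primes. We may thus assume $R$ is complete local Nagata and $\mathcal{A},\mathcal{B}$ are reduced, so by Stacks Tag 03GH (as in the proof of Thm.\ \ref{conormal}) $\mathcal{A}^{\dagger}$ is the integral closure of $\mathcal{A}$ in $\mathcal{B}$ and is module-finite over $\mathcal{A}$. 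Finally, replace $\mathcal{A}$ and $\mathcal{B}$ by their $n_0$-th Veronese subalgebras for $n_0$ large enough that hypothesis (b) becomes ``$x_0 \notin \mathrm{Ass}_X(\mathcal{B}_n/\overline{\mathcal{A}_n})$ for every $n\ge 1$''; since $\overline{\mathcal{A}^{(n_0)}_k} = \overline{\mathcal{A}_{k n_0}}$ and $\mathcal{A}$ is integral over $\mathcal{A}^{(n_0)}$, module-finiteness of the integral closure and the deficient analytic spread status of both algebras persist.

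By going-up and incomparability for the module-finite integral extension $\mathcal{A} \hookrightarrow \mathcal{A}^{\dagger}$, lift $\mathfrak{p}$ to a prime $\mathfrak{p}^{\dagger}$ of $\mathcal{A}^{\dagger}$ minimal over $\mathfrak{m}_{x_0}\mathcal{A}^{\dagger}$ with $\mathrm{ht}(\mathfrak{p}^{\dagger})=\mathrm{ht}(\mathfrak{p})\leq 1$. Since $\mathcal{B}$ is reduced and $\mathcal{A}^{\dagger} \hookrightarrow \mathcal{B}$, the identity $\bigcap_j \mathfrak{Q}_j = 0$ over the minimal primes of $\mathcal{B}$ contracts to $\bigcap_j(\mathfrak{Q}_j \cap \mathcal{A}^{\dagger}) = 0$, so every minimal prime of $\mathcal{A}^{\dagger}$ is the contraction of some minimal prime of $\mathcal{B}$. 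If $\mathrm{ht}(\mathfrak{p}^{\dagger}) = 0$, this gives a minimal prime $\mathfrak{Q}$ of $\mathcal{B}$ with $\mathfrak{Q} \cap \mathcal{A}^{\dagger} = \mathfrak{p}^{\dagger} \supset \mathfrak{m}_{x_0}$, so $\mathfrak{Q}$ is itself a minimal prime of $\mathfrak{m}_{x_0}\mathcal{B}$ of height zero, contradicting the deficient analytic spread of $\mathcal{B}$.

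The essential case is $\mathrm{ht}(\mathfrak{p}^{\dagger})=1$. Since $\mathcal{B}$ has deficient analytic spread, no minimal prime of $\mathcal{B}$, and hence (by the previous step) no minimal prime of $\mathcal{A}^{\dagger}$, contains $\mathfrak{m}_{x_0}$; combined with hypothesis (a) and the upgraded (b), prime avoidance yields $h \in \mathfrak{m}_{x_0}$ that is a nonzerodivisor on $\mathcal{B}$, on $\mathcal{A}^{\dagger}$, and on each $\mathcal{B}_n/\overline{\mathcal{A}_n}$. The last condition makes $\phi_h \colon \mathcal{A}^{\dagger}/h\mathcal{A}^{\dagger} \hookrightarrow \mathcal{B}/h\mathcal{B}$ injective. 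Because $h$ is a nonzerodivisor on $\mathcal{A}^{\dagger}$ and $\mathrm{ht}(\mathfrak{p}^{\dagger}) = 1$, $\mathfrak{p}^{\dagger}$ is minimal over $(h)\mathcal{A}^{\dagger}$, hence $\mathfrak{p}^{\dagger}/h\mathcal{A}^{\dagger}$ is a minimal prime of $\mathcal{A}^{\dagger}/h\mathcal{A}^{\dagger}$. Using the elementary fact that for any ring injection $A \hookrightarrow B$ and minimal prime $\mathfrak{q}$ of $A$ there is a prime of $B$ contracting to $\mathfrak{q}$ (localize $B$ at $A \setminus \mathfrak{q}$ and note $A_{\mathfrak{q}} \hookrightarrow (A\setminus\mathfrak{q})^{-1}B$ forces the right side nonzero), followed by descent to a minimal prime of $\mathcal{B}/h\mathcal{B}$ below the resulting prime, I obtain a minimal prime of $\mathcal{B}/h\mathcal{B}$ contracting to $\mathfrak{p}^{\dagger}/h\mathcal{A}^{\dagger}$. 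Lifting back produces $\mathfrak{P} \subset \mathcal{B}$ minimal over $(h)\mathcal{B}$, hence of height $1$ by Krull's principal ideal theorem, with $\mathfrak{P} \supset \mathfrak{p}^{\dagger} \supset \mathfrak{m}_{x_0}$. Then $\mathfrak{P}$ contains $\mathfrak{m}_{x_0}\mathcal{B}$, so any minimal prime of $\mathfrak{m}_{x_0}\mathcal{B}$ contained in $\mathfrak{P}$ has height $\leq 1$, contradicting the deficient analytic spread of $\mathcal{B}$. The main technical obstacle is arranging all of the required genericity conditions on $h$ simultaneously, and in particular upgrading hypothesis (b) to hold in every degree; this is precisely what the Veronese reduction accomplishes.
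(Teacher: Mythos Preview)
Your overall strategy---lift a low-height minimal prime of $\mathfrak{m}_{x_0}\mathcal{A}$ through $\mathcal{A}^{\dagger}$ and then into $\mathcal{B}$ using the injection $\mathcal{A}^{\dagger}/h\mathcal{A}^{\dagger}\hookrightarrow\mathcal{B}/h\mathcal{B}$---is sound and different from the paper's. But your completion step is both unjustified and unnecessary. You assert that the hypothesis ``$x_0\notin\mathrm{Ass}_X(\mathcal{B}_n/\overline{\mathcal{A}_n})$ for large $n$'' passes to the completion by faithful flatness; this is not clear, since integral closure does not commute with completion in general (Lem.~\ref{key lemma}\,(1) goes the other way, and equality $\widehat{\overline{\mathcal{A}_n}}=\overline{\widehat{\mathcal{A}_n}}$ requires analytic unramifiedness). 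Fortunately you never actually use module-finiteness of $\mathcal{A}^{\dagger}$: going-up and incomparability hold for any integral extension, the height inequality $\mathrm{ht}(\mathfrak{p}^{\dagger})\le\mathrm{ht}(\mathfrak{p})$ follows from incomparability alone, and $h$ a nonzerodivisor on the reduced ring $\mathcal{B}$ is automatically a nonzerodivisor on the subring $\mathcal{A}^{\dagger}$. Drop completion entirely; the reduction to reduced structures (Lem.~\ref{reduction}) and the Veronese replacement suffice, and the rest of your argument then goes through.

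For comparison, the paper's proof is direct rather than contrapositive and avoids $\mathcal{A}^{\dagger}$ altogether. It picks $h_1\in\mathfrak{m}_{x_0}$ avoiding the minimal primes of $\mathcal{A}$ and $\mathcal{B}$ and with $h_1\notin\mathrm{z.div}(\mathcal{B}_n/\overline{\mathcal{A}_n})$ for large $n$, and then shows the kernel of $\mathcal{A}/h_1\mathcal{A}\to\mathcal{B}/h_1\mathcal{B}$ is nilpotent: if $h_1b\in\mathcal{A}_k$ then $h_1^{l}b^{l}\in\mathcal{A}_{kl}$, and for $l$ large $b^{l}\in\overline{\mathcal{A}_{kl}}$, whence an integral equation for $b^l$ multiplied by $h_1^{lk}$ shows $(h_1b)^{N}\in h_1\mathcal{A}$. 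Thus $(\mathcal{A}/h_1\mathcal{A})_{\mathrm{red}}$ embeds in $(\mathcal{B}/h_1\mathcal{B})_{\mathrm{red}}$; since $\mathcal{B}$ has deficient analytic spread, no minimal prime of $h_1\mathcal{B}$ contains $\mathfrak{m}_{x_0}$, so one can choose $h_2\in\mathfrak{m}_{x_0}$ regular on $(\mathcal{B}/h_1\mathcal{B})_{\mathrm{red}}$ and hence on $(\mathcal{A}/h_1\mathcal{A})_{\mathrm{red}}$. Krull then gives $\mathrm{ht}(\langle h_1,h_2\rangle\mathcal{A})=2$, so $\mathrm{ht}(\mathfrak{m}_{x_0}\mathcal{A})\ge2$. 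This is shorter and needs neither the Veronese trick nor any discussion of $\mathcal{A}^{\dagger}$; the nilpotent-kernel computation absorbs the ``large $n$'' hypothesis in one stroke. Your route is more structural---it isolates exactly where the injection $\mathcal{A}^{\dagger}/h\mathcal{A}^{\dagger}\hookrightarrow\mathcal{B}/h\mathcal{B}$ is used---but pays for that with the extra Veronese reduction.
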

\begin{proof} 
If $\dim X \leq 1$, then by Krull's height theorem each minimal prime of $\mathfrak{m}_{x_0}\mathcal{B}$ is of codimension at most $1$. But $\mathcal{B}$ has deficient analytic spread. Then $\mathcal{O}_{X,x_0}$ must have a minimal prime of codimension at least $2$. 

By Lem.\ \ref{reduction} $\rm{(3)}$ and the fact that passing to reduced structures does not affect our codimension hypothesis, we can assume that $X$, $\mathcal{A}$ and $\mathcal{B}$ are reduced. By assumption no minimal prime of $\mathcal{B}$ contracts to  the maximal ideal $\mathfrak{m}_{x_0}$ of $\mathcal{O}_{X,x_0}$. Hence no minimal prime of $\mathcal{A}$ contracts to $\mathfrak{m}_{x_0}$ either. 

Because $x_0 \notin \mathrm{Ass}_{X}(\mathcal{B}_n/\overline{\mathcal{A}_n})$ for $n$ large enough, $\bigcup_{n=1}^{\infty}\mathrm{Ass}_{X}(\mathcal{B}_n/\overline{\mathcal{A}_n})$ is finite by assumption,
by prime avoidance we can select $h_1$ from $\mathfrak{m}_{x_0}$
such that $h_1 \notin \mathrm{z.div}(\mathcal{B}_n/\overline{\mathcal{A}_n})$ for $n$ large enough and $h_1$ avoids the minimal primes of $\mathcal{B}$ and $\mathcal{A}$. By Krull's height theorem each minimal prime of $h_{1}\mathcal{A}$ in $\mathcal{A}$ is of codimension one.

Consider the map $$\psi_{h_1} \colon \mathcal{A}/h_{1}\mathcal{A} \rightarrow \mathcal{B}/h_{1}\mathcal{B}.$$ Set $\mathcal{A}(h_1):= \Ima \psi_{h_{1}}$. Observe that $\Ker \psi_{h_1} = (h_{1}\mathcal{B} \cap \mathcal{A})/h_{1}\mathcal{A}$. By hypothesis $h_{1} \notin \mathrm{z.div}(\mathcal{B}_{n}/\overline{\mathcal{A}_{n}})$ for $n$ large enough. If $b \in \mathcal{B}_k$ with $h_{1}b \in \mathcal{A}_k$ for some $k$, then for each $l$ we have $h_{1}^{l}b^{l} \in \mathcal{A}_{kl}$. But for $l$ large enough we have $h_{1} \notin \mathrm{z.div}(\mathcal{B}_{lk}/\overline{\mathcal{A}_{lk}})$, so $b^{l} \in \overline{\mathcal{A}_{kl}}$.  
Then
$$(b^{l})^{s}+a_1(b^{l})^{s-1}+\cdots+a_s=0$$
for $a_i \in \mathcal{A}_{kli}$. Multiplying both sides of the last equation by $(h_{1})^{ls}$ we get
$$(h_{1}b)^{ls}+h_{1}^{l}a_{1}(h_{1}b)^{l(s-1)}+ \cdots + h_{1}^{ls}a_s=0.$$
Therefore, $(h_{1}b)^{ls}=0$ in $\mathcal{A}/h_{1}\mathcal{A}$ because $h_{1}^{li}a_{i}(h_{1}b)^{l(s-i)} \in h_{1}\mathcal{A}$ for each $i=1, \ldots, s$. Thus, $\Ker \psi_{h_1}$ consists of nilpotents, so $\mathcal{A}/h_{1}\mathcal{A}$ and  $\mathcal{A}(h_1)$ have the same reduced structures.

We have $\mathcal{A}(h_1)_{\mathrm{red}} \subset (\mathcal{B}/h_{1}\mathcal{B})_{\mathrm{red}}$. By hypothesis, no minimal prime of  $h_{1}\mathcal{B}$ contracts to $\mathfrak{m}_{x_0}$. By prime avoidance there exists  $h_2 \in \mathfrak{m}_{x_0}$ such that $h_2 \not \in \mathrm{z.div}(\mathcal{B}/h_{1}\mathcal{B})_{\mathrm{red}}$ and hence 
$h_2 \not \in \mathrm{z.div}\mathcal{A}(h_1)_{\mathrm{red}}$. So $h_2$ avoids the minimal primes of $\mathcal{A}/h_{1}\mathcal{A}$. Therefore, by Krull's height theorem the codimension of each minimal prime of the ideal $\langle h_1,h_2 \rangle \mathcal{A}$ in $\mathcal{A}$ is $2$. But $\langle h_1,h_2 \rangle \mathcal{A} \subset \mathfrak{m}_{x_0}\mathcal{A}$. Thus $\mathcal{A}$ has deficient analytic spread. 
\end{proof}

Note that if $\dim X \ge 2$ and $\mathcal{B}$ is a standard polynomial ring, then the hypothesis on the codimension of the special fiber of $\mathcal{B}$ is satisfied. Furthermore, in this case if $\dim X_i \ge 2$ for each $i$, then the proof above shows that the codimension of $\mathfrak{m}_{x_0}\mathcal{A}$ is at least $2$. 

We will show elsewhere that every standard graded algebra $\mathcal{A}$, whose minimal primes contract to minimal primes of $X$ with $\dim X \geq 2$, can be embedded into a standard graded algebra $\mathcal{B}$ generically equal to $\mathcal{A}$ which has deficient analytic spread. 

Assume $X$ is local Noetherian scheme with closed point $x_0$. Suppose that $\mathcal{M} \subset \mathcal{N}$ is a pair of $\mathcal{O}_{X,x_0}$-modules satisfying the hypothesis at the beginning of Sct.\ \ref{main results}. Applying Thm.\ \ref{converse conormal} and Thm.\ 2.1 in \cite{Katz1} with $\mathcal{B}:=\mathcal{R}(\mathcal{N})$ and $\mathcal{A}$ equal to the image of $\mathcal{R}(\mathcal{M})$ in $\mathcal{B}$  we get the following general converse of Thm.\ \ref{conormal}.
\begin{corollary}\label{converse deficient}
Suppose $\mathcal{N}$ has deficient analytic spread and $x_0 \not \in \mathrm{Ass}_{X}(\mathcal{N}^{n}/\overline{\mathcal{M}^{n}})$ for each $n$. Then $\mathcal{M}$ has deficient analytic spread, too. 
\end{corollary}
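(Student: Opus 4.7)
The plan is to view this corollary as an immediate instance of Theorem \ref{converse conormal}, via the dictionary between Rees algebras of modules and standard graded algebras developed earlier in the paper. Set $\mathcal{B}:=\mathcal{R}(\mathcal{N})$ and let $\mathcal{A}$ denote the image of $\mathcal{R}(\mathcal{M})$ in $\mathcal{B}$, so that $\mathcal{A}_n=\mathcal{M}^n$ and $\mathcal{B}_n=\mathcal{N}^n$ are the graded components of two finitely generated standard graded $\mathcal{O}_{X,x_0}$-algebras with $\mathcal{A}_0=\mathcal{B}_0=\mathcal{O}_{X,x_0}$. By construction the integral closure of $\mathcal{A}_n$ in $\mathcal{B}_n$ is exactly $\overline{\mathcal{M}^n}$.

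With this translation in hand, the three hypotheses of Theorem \ref{converse conormal} need to be verified. First, by Proposition \ref{structure morphism}(1) every minimal prime of $\mathcal{B}=\mathcal{R}(\mathcal{N})$ contracts to a minimal prime of $\mathcal{O}_{X,x_0}$, so Theorem \ref{int. asym.} applies and shows that $\bigcup_{n=1}^{\infty}\mathrm{Ass}_X(\mathcal{B}_n/\overline{\mathcal{A}_n})$ is finite; in the case when $\mathcal{N}$ is contained in a free module, the same conclusion can alternatively be extracted from Thm.\ 2.1 of \cite{Katz1}. Second, the standing assumption that $x_0\notin\mathrm{Ass}_X(\mathcal{N}^n/\overline{\mathcal{M}^n})$ for every $n$ translates verbatim into $x_0\notin\mathrm{Ass}_X(\mathcal{B}_n/\overline{\mathcal{A}_n})$ for every $n$, hence a fortiori for $n$ large enough. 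Third, the hypothesis that $\mathcal{N}$ has deficient analytic spread says, by definition, that each irreducible component of the fiber of $\mathrm{Proj}(\mathcal{R}(\mathcal{N}))$ over $x_0$ has codimension at least $2$, which is exactly the statement that every minimal prime of $\mathfrak{m}_{x_0}\mathcal{B}$ has height at least $2$, i.e., $\mathcal{B}$ has deficient analytic spread in the sense of Section \ref{converse results}.

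Applying Theorem \ref{converse conormal} then yields that $\mathcal{A}$ has deficient analytic spread. The last step is to transfer this conclusion back to $\mathcal{M}$: by Proposition \ref{structure morphism}(4) the algebras $\mathcal{A}$ and $\mathcal{R}(\mathcal{M})$ share the same reduced structure, so the special fiber of $\mathrm{Proj}(\mathcal{A})$ over $x_0$ has the same reduced structure as that of $\mathrm{Proj}(\mathcal{R}(\mathcal{M}))$, and the heights of the minimal primes of $\mathfrak{m}_{x_0}\mathcal{A}$ coincide with those of $\mathfrak{m}_{x_0}\mathcal{R}(\mathcal{M})$. Thus $\mathcal{M}$ has deficient analytic spread, as claimed. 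No serious obstacle is anticipated; the only point requiring care is checking that the notion of deficient analytic spread indeed transfers cleanly between $\mathcal{R}(\mathcal{M})$ and its image $\mathcal{A}$ in $\mathcal{R}(\mathcal{N})$, which is precisely what Proposition \ref{structure morphism}(4) delivers.
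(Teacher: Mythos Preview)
Your proof is correct and follows the same approach as the paper: set $\mathcal{A}$ to be the image of $\mathcal{R}(\mathcal{M})$ in $\mathcal{B}:=\mathcal{R}(\mathcal{N})$ and invoke Theorem~\ref{converse conormal}. The paper's proof is a single sentence citing Theorem~\ref{converse conormal} together with Thm.~2.1 of \cite{Katz1} for the finiteness hypothesis; you instead obtain finiteness via Proposition~\ref{structure morphism}(1) and Theorem~\ref{int. asym.} (which is equally valid---indeed this is exactly how Corollary~\ref{ass. int. asym.} is proved), and you spell out the back-transfer from $\mathcal{A}$ to $\mathcal{R}(\mathcal{M})$ via Proposition~\ref{structure morphism}(4), a step the paper leaves implicit.
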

The deficient analytic spread hypothesis implicitly assumes that $\dim X,x_0 \geq 2$. If $\mathcal{N}:=\mathcal{F}$ is a free module, then we can say what happens in the one-dimensional case. The next result is a criterion for a point $x_0$ from $X$ to be an associated point of $\mathcal{F}/\overline{\mathcal{M}}$ when $\dim X,x_0 =1$. In this setting we will assume that $\mathcal{M}$ has rank $e_i$ at the generic point of $X_i$. This is a natural assumption because by Lem.\ \ref{key lemma} we can study the set $\mathrm{Ass}_{X}(\mathcal{F}^{n}/\overline{\mathcal{M}^{n}})$ by passing to the reductions of $\mathrm{Sym}(\mathcal{F})$ and $\mathcal{R}(\mathcal{M})$ and noting that the image of
$\mathcal{M}$ in $\mathcal{F} \otimes_{\mathcal{O}_{X}} \mathcal{O}_{{X}_{\mathrm{red}}}$ is free at the generic point of each $(X_i)_\mathrm{red}$
Set  $e:=\max_{i=1}^{r}\{e_i\}$ and $p:=\mathrm{rk}(\mathcal{F})$. 
\begin{proposition}\label{curve}
Assume $X$ is local scheme with closed point $x_0$ and $\dim X =1$. Assume that $x_0 \notin \mathrm{Ass}_{X}(\mathcal{F}/\overline{\mathcal{M}})$. Then $e_i=e$ for each $i=1, \ldots, r$ and there exists a free module $G$ of rank $e$ that is a direct summand of $\mathcal{M}$ and $\mathcal{F}$. 

If $X$ is reduced or $p=e$, then $\mathcal{M}$ is free of rank $e$ and a direct summand of $\mathcal{F}$. 
\end{proposition}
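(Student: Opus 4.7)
The plan is a two-step reduction followed by the main argument in the extremal case $d=0$. Let $V\subset\mathcal{F}/\mathfrak{m}_{x_0}\mathcal{F}$ be the image of $\mathcal{M}$ and set $d:=\dim_k V$. First note that $V$ also equals the image of $\overline{\mathcal{M}}$: any integral dependence $f^n+m_1f^{n-1}+\cdots+m_n=0$ with $m_i\in\mathcal{M}^i$ reduces modulo $\mathfrak{m}_{x_0}$ to a relation in the polynomial ring $\mathrm{Sym}_k(\mathcal{F}/\mathfrak{m}_{x_0}\mathcal{F})=k[x_1,\ldots,x_p]$, a domain, and a leading-term argument in that ring forces $\bar f\in V$. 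By Nakayama I choose $m_1,\ldots,m_d\in\mathcal{M}$ whose images span $V$ and complete them to a basis $m_1,\ldots,m_d,f_{d+1},\ldots,f_p$ of $\mathcal{F}$; setting $G:=Rm_1+\cdots+Rm_d\simeq R^d$ and $\mathcal{F}':=Rf_{d+1}+\cdots+Rf_p$ gives $\mathcal{F}=G\oplus\mathcal{F}'$, $\mathcal{M}=G\oplus\mathcal{M}'$ with $\mathcal{M}':=\mathcal{M}\cap\mathcal{F}'$, so $G$ is at least a common free direct summand of rank $d$.

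Using the tensor decomposition $\mathrm{Sym}(\mathcal{F})=\mathrm{Sym}(G)\otimes_R\mathrm{Sym}(\mathcal{F}')$ and projecting an integral dependence of an element of $\overline{\mathcal{M}}\subset\mathcal{F}$ to $\mathrm{Sym}(\mathcal{F}')$ (the image of $\mathcal{M}^i$ in $\mathrm{Sym}_i(\mathcal{F}')$ is $(\mathcal{M}')^i$), one verifies $\overline{\mathcal{M}}=G\oplus\overline{\mathcal{M}'}$, where $\overline{\mathcal{M}'}$ is the integral closure of $\mathcal{M}'$ in $\mathcal{F}'$. Hence $\mathcal{F}/\overline{\mathcal{M}}\simeq\mathcal{F}'/\overline{\mathcal{M}'}$, the hypothesis $x_0\notin\mathrm{Ass}_X$ descends to the primed pair, and the problem reduces to the case $d=0$, i.e.\ $\mathcal{M}\subset\mathfrak{m}_{x_0}\mathcal{F}$, with ranks $e_i-d$ at $\eta_i$.

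The main obstacle is the step that must now be carried out: under the hypothesis $x_0\notin\mathrm{Ass}_X(\mathcal{F}/\overline{\mathcal{M}})$ with $\mathcal{M}\subset\mathfrak{m}_{x_0}\mathcal{F}$, show $\mathcal{M}_{\eta_i}=0$ for every $i$; unwinding will then yield $e_i=d$ for all $i$, so $e=d$ and $G$ is the desired common summand of rank $e$. My approach is to pick $h\in\mathfrak{m}_{x_0}$ that is a nonzerodivisor on $\mathcal{F}/\overline{\mathcal{M}}$ (possible by hypothesis) and simultaneously avoids the minimal primes of $R$ (possible by prime avoidance, since $\mathrm{Ass}_R(\mathcal{F}/\overline{\mathcal{M}})$ is finite and does not contain $x_0$): the first condition gives $h\mathcal{F}\cap\overline{\mathcal{M}}=h\overline{\mathcal{M}}$ and the injection $\overline{\mathcal{M}}/h\overline{\mathcal{M}}\hookrightarrow\mathcal{F}/h\mathcal{F}$, while the second makes $R/hR$ Artinian so that $\mathcal{F}/h\mathcal{F}$ has finite length. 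Completing along $x_0$ (permissible by Lem.\ \ref{key lemma}, since both the hypothesis and the generic ranks behave well) and passing to $R_{\mathrm{red}}$ (Lem.\ \ref{reduction}) makes $R$ Nagata, so the normalization $\widetilde R$ of $R_{\mathrm{red}}$ is module-finite and splits as a product of DVRs $\widetilde R_1\times\cdots\times\widetilde R_s$. On each factor the Smith normal form of $\widetilde{\mathcal{M}}_i\subset\widetilde R_i^p$ exhibits positive elementary divisors at exactly those components where $\mathcal{M}$ has nonzero rank; the delicate point is to transfer the resulting $k$-torsion in $\widetilde R/\widetilde{\mathcal{M}}$ back across the finite map $\widetilde R\to R_{\mathrm{red}}$ and combine it with the length comparison afforded by the injection into $\mathcal{F}/h\mathcal{F}$ to produce an element violating the choice of $h$.

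The addendum is then immediate. If $X$ is reduced, then $\mathcal{F}'$ is torsion-free with $\mathrm{Ass}(\mathcal{F}')=\mathrm{Ass}(R)=\{\eta_1,\ldots,\eta_r\}$, so a submodule $\mathcal{M}'\subset\mathcal{F}'$ vanishing at every $\eta_i$ has empty set of associated primes and is therefore zero; hence $\mathcal{M}=G$ is itself free of rank $e=d$ and a direct summand of $\mathcal{F}$. If $p=e$ then the rank of the common summand $G$ equals $\mathrm{rk}(\mathcal{F})$, forcing $\mathcal{F}'=0$, so $\mathcal{M}'=0$ and $\mathcal{M}=G=\mathcal{F}$.
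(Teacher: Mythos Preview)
Your reductions are correct: the image of $\overline{\mathcal{M}}$ in $\mathcal{F}/\mathfrak{m}_{x_0}\mathcal{F}$ equals the image $V$ of $\mathcal{M}$, the splitting $\mathcal{F}=G\oplus\mathcal{F}'$, $\mathcal{M}=G\oplus\mathcal{M}'$, $\overline{\mathcal{M}}=G\oplus\overline{\mathcal{M}'}$ with $G$ free of rank $d=\dim_k V$ is fine, and the addendum follows once the main claim is established.

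The gap is exactly where you say it is. In the $d=0$ case you need: if $\overline{\mathcal{M}}\subset\mathfrak{m}_{x_0}\mathcal{F}$ and $x_0\notin\mathrm{Ass}_X(\mathcal{F}/\overline{\mathcal{M}})$, then $\mathcal{M}_{\eta_i}=0$ for all $i$. You set up the injection $\overline{\mathcal{M}}/h\overline{\mathcal{M}}\hookrightarrow\mathcal{F}/h\mathcal{F}$ and propose to pass to a product of DVRs, but the ``delicate point'' of transferring torsion back across the normalization is left as a sketch, not an argument. Integral closure does not commute with the base change $R\to\widetilde R$, so identifying $\widetilde{\overline{\mathcal{M}}}$ with the integral closure of $\widetilde{\mathcal{M}}$ in $\widetilde{\mathcal{F}}$ requires justification, and the length comparison you allude to is not spelled out. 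As written, the crucial implication $d=e$ is not proved.

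The paper avoids this entirely by reversing the logic: instead of defining $d$ and proving $d=e$, it shows directly that $\dim_k V\ge e$. Pick $t\in\mathfrak{m}_{x_0}$ avoiding the minimal primes; since $\dim X=1$ and $\mathcal{R}(\overline{\mathcal{M}})$ is finite over $\mathcal{R}(\mathcal{M})$, one has $\dim\mathrm{Proj}\bigl(\mathcal{R}(\overline{\mathcal{M}})/t\mathcal{R}(\overline{\mathcal{M}})\bigr)=e-1$. The hypothesis $x_0\notin\mathrm{Ass}$ (together with $\dim X=1$, so that any associated prime $\ne\mathfrak{m}_{x_0}$ is minimal) makes $t$ a nonzerodivisor on $\mathcal{F}/\overline{\mathcal{M}}$, hence $\mathcal{R}(\overline{\mathcal{M}})/t\mathcal{R}(\overline{\mathcal{M}})$ embeds in the polynomial ring $\mathrm{Sym}(\mathcal{F}/t\mathcal{F})$; passing to reduced structures, its image is a degree-one-generated subalgebra of $k(x_0)[x_1,\dots,x_p]$ whose $\mathrm{Proj}$ has dimension $e-1$, forcing its degree-one piece to have dimension at least $e$. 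That is exactly $\dim_k V\ge e$, and the rest (Nakayama, $e_i=e$, $H\subset\mathrm{nil}(R)\mathcal{F}_1$) proceeds as you do. This one-line dimension count replaces your entire normalization/Smith-normal-form program.
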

\begin{proof} Let $t$ be an element from the maximal ideal of the local ring $\mathcal{O}_{X,x_0}$ that avoids
its minimal primes. The inclusion $\mathcal{R}(\mathcal{M}) \hookrightarrow \mathcal{R}(\overline{\mathcal{M}})$ induces
a finite map $\mathrm{Proj}(\mathcal{R}(\overline{\mathcal{M}})) \rightarrow \mathrm{Proj}(\mathcal{R}(\mathcal{M}))$ which gives rise to a finite map $\mathrm{Proj}(\mathcal{R}(\overline{\mathcal{M}})/t\mathcal{R}(\overline{\mathcal{M}})) \rightarrow \mathrm{Proj}(\mathcal{R}(\mathcal{M})/t\mathcal{R}(\mathcal{M}))$. But by Prp.\ \ref{structure morphism} \rm{(3)} and the fact that $X$ is universally catenary because $\dim X=1$  (see Cor. 2 to Thm.\  31.7 from \cite{Matsumura}) we have  $\dim \mathrm{Proj}(\mathcal{R}(\mathcal{M})/t\mathcal{R}(\mathcal{M}))=e-1$. Hence
\begin{equation}\label{dim. int.}
 \dim \mathrm{Proj}(\mathcal{R}(\overline{\mathcal{M}})/t\mathcal{R}(\overline{\mathcal{M}})) =e-1.
\end{equation}
By hypothesis  $x_0 \notin \mathrm{Ass}_{X}(\mathcal{F}/\overline{\mathcal{M}})$. Therefore, $\overline{\mathcal{M}}/t\overline{\mathcal{M}}$ injects 
into $\mathcal{F}/t\mathcal{F}$. Thus $\mathcal{R}(\overline{\mathcal{M}})/t\mathcal{R}(\overline{\mathcal{M}})$ is a subalgebra 
generated in degree one of a polynomial algebra over the Artinian ring $\mathcal{O}_{X,x_0}/t\mathcal{O}_{X,x_0}$.  Consider the reduced structures of $\mathrm{Sym}(\mathcal{F}/t\mathcal{F})$ and $\mathcal{R}(\overline{\mathcal{M}})/t\mathcal{R}(\overline{\mathcal{M}})$.
Because $(\mathcal{R}(\overline{\mathcal{M}})/t\mathcal{R}(\overline{\mathcal{M}}))_{\mathrm{red}}$ is generated in degree one inside a polynomial ring over the residue field $k(x_0)$ and because of (\ref{dim. int.}) it follows that there exists $e$ elements in $\overline{\mathcal{M}}$ whose images in $\mathcal{F}/\mathfrak{m}_{x_0}\mathcal{F}$ are linearly independent over $k(x_0)$. Thus, by Nakayama's lemma there exist a free module $G$ of rank $e$, a free module $\mathcal{F}_{1}$, and $H \subset \mathfrak{m}_{x_0}\mathcal{F}_{1}$ such that 
$$\overline{\mathcal{M}}=G \oplus H \ \text{and} \ \mathcal{F}= G \oplus \mathcal{F}_{1}.$$
Hence $\dim (\mathcal{F}/\overline{\mathcal{M}}) \otimes k(\eta_i) \leq p-e$. However, $\mathcal{F}_{\eta_i}/\overline{\mathcal{M}_{\eta_i}} = k(\eta_i)^{p-e_i}$. Therefore, $e_i=e$ for each $i=1, \ldots, r$. In particular, 
$H_{\eta_i}=0$ for each $i$. But $H$ is contained in a free $\mathcal{O}_{X,x_0}$-module. Thus, $H \subset \mathrm{nil}(\mathcal{O}_{X,x_0})\mathcal{F}_{1}$. On the other hand, $\mathrm{nil}(\mathcal{O}_{X,x_0})\mathcal{F}_{1} \subset \overline{\mathcal{M}}.$ Hence $H=\mathrm{nil}(\mathcal{O}_{X,x_0})\mathcal{F}_{1}$. Because each element of $G$ is integral over $\mathcal{M}$, then $G$ must be a direct summand of $\mathcal{M}$ as well (cf. Lem.\ 2.4.1 in \cite{Katz2}).

 If $p=e$, then $\mathcal{M}=\mathcal{F}$. Suppose $X$ is reduced. Then $H=0$. Hence $\mathcal{M}=G$. Thus $\mathcal{M}$ is free of rank $e$ and direct summand of $\mathcal{F}$. 
\end{proof}

\begin{remark}\label{def. vs. anal.} \rm Before going further we would like to compare the hypothesis and implications of our Thm.\ \ref{conormal} and Cor.\  \ref{converse deficient} with those in the aforementioned results of McAdam, Burch, Rees, Katz and Rice. To facilitate our discussion let's focus on Thm.\ 3.5.1 in \cite{Katz2} which is a special case of Thm.\ \ref{conormal} and Cor.\  \ref{converse deficient}. For simplicity assume $X,x_0$ is local and irreducible, $\mathcal{N}:=\mathcal{F}$ is a free $\mathcal{O}_{X,x_0}$-module and $\mathcal{M}$ has rank equal to $\mathrm{rk}(\mathcal{F})$ at the generic point of $X$. Define the {\it analytic spread} $l(\mathcal{M})$ of $\mathcal{M}$ as the Krull dimension of $\mathcal{R}(\mathcal{M})/\mathfrak{m}_{x_0}\mathcal{R}(\mathcal{M})$. Observe that if $\dim X \leq 1$, then Prp.\ \ref{curve},  Lem.\ 2.4.1 in \cite{Katz2} and Thm.\ \ref{int. asym.} imply that $x_0 \in \mathrm{Ass}_{X}(\mathcal{F}^n/\overline{\mathcal{M}^n})$ for each $n$ provided that $\mathcal{M} \neq \mathcal{F}$. Thus we can assume that $\dim X \geq 2$. 

In Thm.\ \ref{conormal} we assume that $X$ is universally catenary and $\codim D_j \geq 2$ in $\mathcal{R}(\mathcal{M})$. 
The appropriate restatement of the hypothesis of the reverse direction of Thm.\ 3.5.1 in \cite{Katz2} is $X$ is universally catenary and $l(\mathcal{M})$ is not maximal. The two sets of hypothesis are the same as demonstrated by Prp.\ \ref{structure morphism} $\rm{(3)}$. However, in Cor.\ \ref{converse deficient} assuming that $x_0 \not \in \mathrm{Ass}_{X}(\mathcal{F}^n/\overline{\mathcal{M}^n})$ for each $n$, we get that $\mathcal{M}$ has deficient analytic spread or in other words 
\begin{equation}\label{hyp.1}
\codim D_j \geq 2
\end{equation}
for each $j$, whereas the conclusion of the forward direction of Thm.\ 3.5.1 in \cite{Katz2} is
that 
\begin{equation}\label{hyp.2}
l(\mathcal{M}) \leq \dim \mathcal{R}(\mathcal{M})-2,
\end{equation}
or in other words $l(\mathcal{M})$ is not maximal. Because 
\begin{equation}\label{hyp.3}
l(\mathcal{M}) + \codim D_j \leq \dim  \mathcal{R}(\mathcal{M}), 
\end{equation}
with equality when $X$ is universally catenary, we see that (\ref{hyp.1}) implies (\ref{hyp.2}) always. However, (\ref{hyp.2}) may not imply  (\ref{hyp.1})  in the case when (\ref{hyp.3}) is a strict inequality. Therefore, the conclusion of Cor.\ \ref{converse deficient} is stronger than that of the forward direction of Thm.\ 3.5.1 in \cite{Katz2}. The same comparison applies for the results of McAdam and Burch in the ideal case.
\end{remark}


Let $X$ be a local, Noetherian reduced scheme. Consider the nested chain of modules  $\mathcal{M} \subsetneq \mathcal{N} \subset \mathcal{F}$ where $\mathcal{F}$ is free and $\mathcal{M}$ and $\mathcal{N}$ are locally free of the same rank $e_i$ at the generic point of each irreducible component $X_i$ of $X$. For $x \in X$, let  $e_x$ be the maximal $e_i$ where $i$ runs through all indices for which $x \in X_i$. Denote by $Z$ the nonfree locus of  $\mathcal{F}/\mathcal{M}$. Identify the Rees algebras $\mathcal{R}(\mathcal{M})$ and $\mathcal{R}(\mathcal{N})$ with their images in $\mathrm{Sym}(\mathcal{F})$. For each $n$ denote by $\overline{\mathcal{M}^{n}}$ the integral closure of $\mathcal{M}^{n}$ in $\mathcal{F}^{n}$. For each closed set $W$ of $X$ set
$$\mathcal{M}^{n}(W):= \bigcup_{l=0}^{\infty}(\mathcal{M}^{n} :_{\mathcal{F}^{n}} \mathcal{I}_{W}^{l})$$
where $\mathcal{I}_{W}$ is the ideal of $W$ in $\mathcal{O}_X$. Denote by $n_0$ the smallest integer for which $$\mathrm{Ass}_{X}(\mathcal{F}^{n_0}/\overline{\mathcal{M}^{n_0}})=\mathrm{Ass}_{X}(\mathcal{F}^{n_0+k}/\overline{\mathcal{M}^{n_0+k}})$$
for every $k$. Its existence is guaranteed by Cor.\ \ref{ass. int. asym.}. Finally, set $B:= \mathrm{Proj}(\mathcal{R}(\mathcal{M}))$ and denote by $b$ the structure morphism $b \colon B \rightarrow X$. For this setup we can use Thm.\ \ref{converse conormal} to get a converse to Thm.\ \ref{Kleiman-Thorup}.
\begin{corollary}\label{converse to KT}
Let $Z_1$ be an irreducible component of $Z$ with generic point $x_0$. Suppose that $\mathcal{N}^{n}$ contains $\mathcal{M}^{n}(Z_1)$ for some $n>n_0$. We have:
 \begin{itemize}
     \item [(1)] If $\codim (Z_1,X_j)=1$ for all $X_j$ that contain $Z_1$ and $\mathcal{M}$ and $\mathcal{F}$ do not share a free direct summand of rank $e_{x_0}$, then $\mathcal{N}$ is not integral over $\mathcal{M}$. 
     \item[(2)]  If $\codim (Z_1,X_j) \neq 1$ for some $X_j$ that contains $Z_1$ and  $\dim b^{-1}x_0 = \dim X_i,x_0+e_i-2$ for some $X_i$ that contains $x_0$, then  $\mathcal{N}$ is not integral over $\mathcal{M}$.
 \end{itemize}
\end{corollary}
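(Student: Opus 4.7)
I argue both parts by contradiction, assuming $\mathcal{N}$ is integral over $\mathcal{M}$. Integrality yields $\overline{\mathcal{N}^n}=\overline{\mathcal{M}^n}$ in $\mathcal{F}^n$ by transitivity, so $\mathrm{Ass}_X(\mathcal{F}^n/\overline{\mathcal{N}^n})=\mathrm{Ass}_X(\mathcal{F}^n/\overline{\mathcal{M}^n})$, and makes $\mathrm{Proj}(\mathcal{R}(\mathcal{N}))\to \mathrm{Proj}(\mathcal{R}(\mathcal{M}))$ finite and surjective, preserving codimensions of fiber components over $x_0$. Combined with $\mathcal{M}^n(Z_1)\subset \mathcal{N}^n$, one has the chain $\mathcal{M}^n\subset \mathcal{M}^n(Z_1)\subset \mathcal{N}^n\subset \overline{\mathcal{M}^n}$ at the chosen $n>n_0$. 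For part (1), I localize at $x_0$; the hypothesis $\codim(Z_1,X_j)=1$ for every $X_j\supset Z_1$ forces $\dim \mathcal{O}_{X,x_0}=1$, and since $\mathcal{M}$ and $\mathcal{F}$ share rank $e_i$ at every $\eta_i$, the quotient $(\mathcal{F}^n/\mathcal{M}^n)_{x_0}$ is supported only at $\{x_0\}$ and so is $\mathfrak{m}_{x_0}$-primary; hence $\mathcal{M}^n(Z_1)_{x_0}=\mathcal{F}^n_{x_0}$. The hypothesis then forces $\mathcal{N}^n_{x_0}=\mathcal{F}^n_{x_0}$, and a Nakayama argument applied to $\mathrm{Sym}^n$ promotes this to $\mathcal{N}_{x_0}=\mathcal{F}_{x_0}$; together with the integrality $\mathcal{N}\subset \overline{\mathcal{M}}$ this yields $\overline{\mathcal{M}_{x_0}}=\mathcal{F}_{x_0}$ and hence $x_0\notin \mathrm{Ass}_X(\mathcal{F}/\overline{\mathcal{M}})$. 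The contrapositive of Prp.~\ref{curve} then produces a free direct summand of $\mathcal{M}$ and $\mathcal{F}$ of rank $e_{x_0}$, contradicting the hypothesis in (1).

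For part (2), the hypothesis $\codim(Z_1,X_j)\ne 1$ for some $X_j\supset Z_1$ forces $\dim \mathcal{O}_{X,x_0}\ge 2$, so $\mathrm{Sym}(\mathcal{F})$ has deficient analytic spread; meanwhile $\dim b^{-1}x_0=\dim X_i,x_0+e_i-2$ translates via Prp.~\ref{structure morphism} to the existence of a fiber component of $\mathrm{Proj}(\mathcal{R}(\mathcal{M}))$ of codimension one in $B_i$, so $\mathcal{R}(\mathcal{M})$, and hence (through the finite codimension-preserving surjection above) also $\mathcal{R}(\mathcal{N})$, fails to have deficient analytic spread. Applying Thm.~\ref{converse conormal} contrapositively to $\mathcal{A}:=\mathcal{R}(\mathcal{N})$, $\mathcal{B}:=\mathrm{Sym}(\mathcal{F})$, and using the finiteness of $\bigcup_n\mathrm{Ass}_X(\mathcal{F}^n/\overline{\mathcal{N}^n})$ from Thm.~2.1 of \cite{Katz1}, I obtain $x_0\in \mathrm{Ass}_X(\mathcal{F}^n/\overline{\mathcal{N}^n})=\mathrm{Ass}_X(\mathcal{F}^n/\overline{\mathcal{M}^n})$ for all sufficiently large $n$, hence at the specific $n>n_0$ by asymptotic stability (Cor.~\ref{ass. int. asym.}).

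The main obstacle, and where the real work in part (2) lies, is to convert this last conclusion into a contradiction with $\mathcal{M}^n(Z_1)\subset \overline{\mathcal{M}^n}$. My plan is to mimic the Determinantal-Trick argument from the end of the proof of Thm.~\ref{conormal}: a witness $f\in \mathcal{F}^n_{x_0}\setminus \overline{\mathcal{M}^n_{x_0}}$ with $\mathfrak{m}_{x_0}f\subset \overline{\mathcal{M}^n_{x_0}}\subset \mathcal{R}^{\dagger}(\mathcal{M})_{x_0}$ gives $f\,\mathfrak{m}_{x_0}\subset \mathcal{R}^{\dagger}(\mathcal{M})_{x_0}$; after reducing to the reduced complete-local case via Lemmas~\ref{key lemma} and \ref{reduction}, one exploits that $\mathcal{R}^{\dagger}(\mathcal{M})$ is integrally closed in $\mathrm{Sym}(\mathcal{F})$ together with the faithfulness of $\mathfrak{m}_{x_0}\mathcal{R}^{\dagger}(\mathcal{M})_{x_0}$ as a module over the subring $\mathcal{R}^{\dagger}(\mathcal{M})_{x_0}[f]$ to apply the Determinantal Trick Lemma; this would force $f\in \mathcal{R}^{\dagger}(\mathcal{M})_{x_0}=\overline{\mathcal{M}^n_{x_0}}$, the desired contradiction.
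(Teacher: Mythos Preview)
Your argument for part (1) rests on a misreading of the setup: the hypothesis is that $\mathcal{M}$ and $\mathcal{N}$ are locally free of the same rank $e_i$ at each $\eta_i$, \emph{not} that $\mathcal{M}$ and $\mathcal{F}$ agree in rank. In general $p=\mathrm{rk}(\mathcal{F})>e_i$, so $(\mathcal{F}^n/\mathcal{M}^n)_{\eta_i}\neq 0$ and the quotient is certainly not supported only at $x_0$; hence $\mathcal{M}^n(Z_1)_{x_0}\neq \mathcal{F}^n_{x_0}$ and the chain of equalities leading to $\mathcal{N}_{x_0}=\mathcal{F}_{x_0}$ collapses. The rest of part (1) --- the Nakayama step promoting $\mathcal{N}^n_{x_0}=\mathcal{F}^n_{x_0}$ to $\mathcal{N}_{x_0}=\mathcal{F}_{x_0}$ --- therefore never gets off the ground.

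For part (2), your detour through $\mathcal{R}(\mathcal{N})$ works, but the final ``main obstacle'' you identify is not an obstacle at all, and the Determinantal Trick you propose is both unnecessary and not obviously correct as stated (you need $f\cdot\mathfrak{m}_{x_0}\mathcal{R}^{\dagger}(\mathcal{M})\subset\mathfrak{m}_{x_0}\mathcal{R}^{\dagger}(\mathcal{M})$, not merely $f\cdot\mathfrak{m}_{x_0}\subset\mathcal{R}^{\dagger}(\mathcal{M})$). What you are missing in both parts is the elementary observation the paper makes first: since $x_0$ is the generic point of a component of $Z$, for $y\neq x_0$ in $\mathrm{Spec}(\mathcal{O}_{X,x_0})$ the module $\mathcal{M}_y$ is a free direct summand of $\mathcal{F}_y$ and hence integrally closed (as $X$ is reduced), so $\mathrm{Supp}_{X,x_0}(\overline{\mathcal{M}^n}/\mathcal{M}^n)=\{x_0\}$. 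This immediately gives $\overline{\mathcal{M}^n}\subset\mathcal{M}^n(x_0)$; and any witness $f\in\mathcal{F}^n\setminus\overline{\mathcal{M}^n}$ with $\mathfrak{m}_{x_0}f\subset\overline{\mathcal{M}^n}$ then satisfies $\mathfrak{m}_{x_0}^{l+1}f\subset\mathcal{M}^n$ for suitable $l$, i.e.\ $f\in\mathcal{M}^n(x_0)\setminus\overline{\mathcal{M}^n}$. Thus $x_0\in\mathrm{Ass}_X(\mathcal{F}^n/\overline{\mathcal{M}^n})$ forces $\mathcal{M}^n(x_0)\not\subset\overline{\mathcal{M}^n}$, contradicting $\mathcal{M}^n(x_0)\subset\mathcal{N}^n\subset\overline{\mathcal{M}^n}$. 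With this in hand, part (1) follows from Prp.~\ref{curve} and Cor.~\ref{ass. int. asym.} (no need to prove $\mathcal{N}_{x_0}=\mathcal{F}_{x_0}$), and part (2) follows by applying Thm.~\ref{converse conormal} directly to $\mathcal{A}=\mathcal{R}(\mathcal{M})$, $\mathcal{B}=\mathrm{Sym}(\mathcal{F})$, avoiding the passage through $\mathcal{R}(\mathcal{N})$ altogether.
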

\begin{proof} First note that $\mathrm{Supp}_{X,x_0}(\overline{\mathcal{M}^{n}}/\mathcal{M}^{n})=x_0$. Indeed, if $y \in X,x_0$ with $y \neq x_0$, then $\mathcal{M}_y$ is free and direct summand of $\mathcal{F}_y$. Hence $\mathcal{M}_y$ is integrally closed because $X$ is reduced.

Consider $\rm{(1)}$. By assumption $\codim (Z_1,X_j)=1$ for all $X_j$ that contain $Z_1$. Thus $X,x_0$ is of pure dimension one. Replace $X$ with $X,x_0$ and replace $\mathcal{M}$, $\mathcal{N}$ and  $\mathcal{F}$ with their stalks at $x_0$. By Prp.\ \ref{curve} and our assumptions $x_0 \in \mathrm{Ass}_{X}(\mathcal{F}/\overline{\mathcal{M}})$. By Cor.\ \ref{ass. int. asym.} $x_0 \in \mathrm{Ass}_{X}(\mathcal{F}^n/\overline{\mathcal{M}^n})$ for each $n$. Then $\mathcal{M}^{n}(x_0)$ contains $\overline{\mathcal{M}^{n}}$ for each $n$. Because $\mathcal{N}^{n}$ contains $\mathcal{M}^{n}(x_0)$ for some $n$ by hypothesis, it follows that $\mathcal{N}$ is not integral over $\mathcal{M}$.


Consider $\rm{(2)}$. 
By assumption 
\begin{equation}\label{dim-cat}
\dim b^{-1}x_0 = \dim X_i,x_0+e_i-2.
\end{equation}
Replace $X$ and $X_i$ with $X,x_0$ and  $X_i,x_0$. Further, replace $\mathcal{M}$, $\mathcal{N}$ and  $\mathcal{F}$ with their stalks at $x_0$. We have $\dim X,x_0 \ge 2$ because by assumption $\dim X,x_0 \neq 1$ and $x_0$ can not be a generic point of an irreducible component of $X$. Let $n>n_0$ be such that $\mathcal{N}^{n}$ contains $\mathcal{M}^{n}(x_0)$. Then $\overline{\mathcal{M}^{n}} \subset \mathcal{N}^{n}$. Assume $\overline{\mathcal{M}^{n}} = \mathcal{N}^{n}$. Then $x_0 \not \in \mathrm{Ass}_{X}(\mathcal{F}^{n}/\overline{\mathcal{M}^{n}})$ for each $n$ large enough. Hence by Thm.\ \ref{converse conormal} for each irreducible component $D_j$ of $b_{i}^{-1}x_0$ we have $\codim D_j \geq 2$ in each irreducible component of $B$ that contains $D_j$; hence $\dim b^{-1}x_0 \leq \dim X_{i},x_0+e_i-3$ contradicting (\ref{dim-cat}). Thus $\overline{\mathcal{M}^{n}} \neq \mathcal{N}^{n}$ and hence $\mathcal{N}$ is not integral over $\mathcal{M}$.
\end{proof}
Of course, if one wants to keep the analogy with Thm.\ \ref{Kleiman-Thorup}, then in Cor.\ \ref{converse to KT} $(\rm{2})$ we can replace the dimension hypothesis with $\dim b^{-1}Z_1 = \dim X_i+e_i-2$ provided that $X$ is catenary. 

Next, assume $\mathcal{N}$ has rank $p_i$  and $\mathcal{M}$ has rank $e_i$ at the generic point of each irreducible component $X_i$ of $X$. As above, for $x \in X$, let  $e_x$ be the maximal $e_i$ where $i$ runs through all indices for which $x \in X_i$. Set $B:=\mathrm{Proj}(\mathcal{M})$. For each irreducible component $B_i$ of $B$ define the closed sets $S_{i}(k)$ as in (\ref{S(i)}). The following result generalizes  results by  Rees \cite{Rees}, Katz and Rice (see Thm.\ 3.5.1 in \cite{Katz2}) and Burch \cite{Burch} in the ideal case. Along with  Cor.\ \ref{McAdam} it provides complete discription of the associated points of $\mathcal{N}^{n}/\overline{\mathcal{M}^{n}}$.
\begin{theorem}\label{compl. desc.}
The following hold:
\begin{itemize}
\item[(1)] If $\eta_i$ is the generic point of $X_i$ for some $i$ and $p_i \neq e_i$, then $\eta_i \in \mathrm{Ass}_{X}(\mathcal{N}^{n}/\overline{\mathcal{M}^{n}})$ for each $n$.
\item[(2)] Suppose $\mathcal{N}:=\mathcal{F}$ is a free $R$-module. If $\dim X,x =1$, and $\mathcal{F}$ and $\mathcal{M}$ do not share a direct free summand of rank $e_x$, then $x \in \mathrm{Ass}_{X}(\mathcal{F}^{n}/\overline{\mathcal{M}^{n}})$ for each $n$.
\item[(3)] Assume $\mathcal{N}$ has deficient analytic spread at each $x \in X$ with $\dim X,x \geq 2$. If $\dim X,x \geq 2$ and $x$ is the generic point of an irreducible component of $S_{i}(\dim X_i,x+e_i-2)$ for some $i$,  then $x \in \mathrm{Ass}_{X}(\mathcal{N}^{n}/\overline{\mathcal{M}^{n}})$ for $n$ large enough.
\end{itemize}
\end{theorem}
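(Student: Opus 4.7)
The plan is to settle parts (1) and (2) by appeal to results already in the section, and to devote the bulk of the work to part (3). For (1), the inclusion $\mathcal{M}\subset\mathcal{N}$ forces $e_i\le p_i$, so the hypothesis $p_i\ne e_i$ is the same as $p_i>e_i$, which is covered verbatim by the first assertion of Thm.\ \ref{ass points-int.cl.}\ $\rm(1)$. For (2), I would argue by contrapositive: if $x\notin\mathrm{Ass}_X(\mathcal{F}/\overline{\mathcal{M}})$, then after localizing at $x$ the hypothesis of Prp.\ \ref{curve} is fulfilled, and that proposition produces a free rank-$e_x$ summand shared by $\mathcal{M}_x$ and $\mathcal{F}_x$, contradicting our assumption. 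Thus $x \in \mathrm{Ass}_X(\mathcal{F}/\overline{\mathcal{M}})$, and the monotonicity of Cor.\ \ref{ass. int. asym.} propagates this to $x \in \mathrm{Ass}_X(\mathcal{F}^n/\overline{\mathcal{M}^n})$ for every $n$.

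For part (3), the strategy is to localize at $x$, convert the jumping-fiber hypothesis into the assertion that $\mathcal{R}(\mathcal{M}_x)$ fails to have deficient analytic spread, and then invoke the contrapositive of Thm.\ \ref{converse conormal}. After replacing $X$ by $\mathrm{Spec}(\mathcal{O}_{X,x})$ with closed point $x_0=x$, the hypothesis that $x$ is a generic point of a component of $S_i(\dim X_{i,x}+e_i-2)$ becomes $\dim b_i^{-1}(x_0)\ge \dim X_i + e_i - 2$. By Prp.\ \ref{structure morphism}\ $\rm(2)$, $\dim B_i = \dim X_i + e_i - 1$, and by Prp.\ \ref{structure morphism}\ $\rm(3)$ every irreducible component $D_j$ of $b_i^{-1}(x_0)$ satisfies $\dim D_j\le \dim B_i - 1 = \dim X_i + e_i - 2$. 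Consequently the lower bound on the fiber dimension is actually attained by some $D_j$, and the inequality $\dim D_j + \codim(D_j,B_i)\le \dim B_i$ forces $\codim(D_j,B_i)=1$. Since $B_i$ is irreducible by Prp.\ \ref{structure morphism}\ $\rm(1)$, this is also the codimension of $D_j$ in $B$, and Prp.\ \ref{structure morphism}\ $\rm(4)$ shows that the same codimension persists after passing to the image $\mathcal{A}$ of $\mathcal{R}(\mathcal{M}_x)$ in $\mathcal{B}:=\mathcal{R}(\mathcal{N}_x)$. In particular $\mathcal{A}$ does not have deficient analytic spread.

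I would then apply Thm.\ \ref{converse conormal} to the pair $\mathcal{A}\subset\mathcal{B}$: the deficient analytic spread of $\mathcal{B}$ is supplied by the hypothesis on $\mathcal{N}$ at $x$, the finiteness of $\bigcup_n\mathrm{Ass}_X(\mathcal{N}^n/\overline{\mathcal{M}^n})$ was established in the opening paragraph of Thm.\ \ref{int. asym.}, and we have just seen that $\mathcal{A}$ fails the deficient analytic spread property. The only way this configuration can be compatible with Thm.\ \ref{converse conormal} is if its remaining hypothesis ``$x_0\notin\mathrm{Ass}_X(\mathcal{N}^n/\overline{\mathcal{M}^n})$ for all $n$ large enough'' is violated, i.e.\ $x_0\in\mathrm{Ass}_X(\mathcal{N}^n/\overline{\mathcal{M}^n})$ for infinitely many $n$. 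The monotonicity in Cor.\ \ref{ass. int. asym.} then upgrades this to membership for every sufficiently large $n$, which is the desired conclusion.

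The main obstacle is the dimension bookkeeping in the middle paragraph: the crucial realization is that ``$x$ is a generic point of an irreducible component of $S_i(\dim X_{i,x}+e_i-2)$'' is, after localization, precisely the condition that produces a codimension-one component of the fiber $b^{-1}x_0$, so that the contrapositive of Thm.\ \ref{converse conormal}, combined with the asymptotic stability supplied by Cor.\ \ref{ass. int. asym.}, yields the result. Once this translation is made, the remaining steps are a routine packaging of results already at hand.
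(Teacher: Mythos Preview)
Your proposal is correct and follows essentially the same route as the paper. For (1) and (2) you invoke exactly the same results (Thm.~\ref{ass points-int.cl.}~(1), Prp.~\ref{curve}, and Cor.~\ref{ass. int. asym.}); for (3) both arguments localize at $x$, use Prp.~\ref{structure morphism} to translate the jumping-fiber hypothesis into a codimension-one component of $b^{-1}x_0$, and then apply the contrapositive of Thm.~\ref{converse conormal} together with Cor.~\ref{ass. int. asym.}---the only cosmetic difference is that you compute the fiber codimension first and then contradict Thm.~\ref{converse conormal}, whereas the paper assumes the conclusion of Thm.~\ref{converse conormal} first and then contradicts the fiber-dimension hypothesis.
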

\begin{proof}
We have $x \in \mathrm{Ass}_{X}(\mathcal{N}^n/\overline{\mathcal{M}^n})$ if and only if $ x \in \mathrm{Ass}_{X,x}(\mathcal{N}_{x}^n/\overline{\mathcal{M}_{x}^n})$. So assume $X$ is local with closed point $x$. Replace $\mathcal{M}$ and $\mathcal{N}$ with their stalks at $x$. Part $\rm{(1)}$ follows from the proof of Thm.\ \ref{ass points-int.cl.} $\rm{(1)}$. 

Consider $\rm{(2)}$. By Prp.\ \ref{curve} we have $x \in \mathrm{Ass}_{X,x}(\mathcal{F}_x/\overline{\mathcal{M}_x})$, so by Cor.\ \ref{ass. int. asym.} $x \in \mathrm{Ass}_{X}(\mathcal{F}^{n}/\overline{\mathcal{M}^{n}})$ for each $n$. 

Consider $\rm{(3)}$. Suppose $x \notin \mathrm{Ass}_{X}(\mathcal{N}^{n}/\overline{\mathcal{M}^{n}})$ for each $n$. By Cor.\ \ref{ass. int. asym.} $\bigcup_{n=1}^{\infty}\mathrm{Ass}_{X}(\mathcal{N}^{n}/\overline{\mathcal{M}^{n}})$ is finite. Apply Thm.\ \ref{converse conormal} with $\mathcal{A}:=\mathcal{R}(\mathcal{M})$ and $\mathcal{B}:=\mathcal{R}(\mathcal{N})$. Note that by Prp.\ \ref{structure morphism} \rm{(2)} we have $\dim B_i = \dim X_i,x+e-1$. Then for each $i$ and each component $D_j$ of $b_{i}^{-1}x$ we must have $\codim (D_j, B_i) \geq 2$. But
$$\dim D_j \leq \dim B_i - \codim (D_j,B_i).$$
Hence  $\dim D_j \leq \dim X_i,x+e-3$ which contradicts with the assumption that $x \in S_{i}(\dim X_i,x+e-2)$ for some $i$. Thus, by Cor.\ \ref{ass. int. asym.}, $x \in \mathrm{Ass}_{X}(\mathcal{N}^{n}/\overline{\mathcal{M}^{n}})$ for $n$ large enough.
\end{proof}

It's worth mentioning that Thm.\ \ref{compl. desc.} recovers Burch's result  when $\mathcal{M}:=I$ is an ideal in $\mathcal{N}:=R$ whereas those of Rees, Katz and Rice do not. The reason is that our rank hypothesis allows for the possibility of $I$ to be contained in some of the minimal primes of $R$ but not in others. The fixed rank hypothesis imposed by Rees, Katz and Rice, however, implies that if $I$ does not consist entirely of nilpotents, then $I$ must avoid all minimal primes of $R$.  

The following corollary characterizes the embeddings of $\mathcal{M}$ into modules $\mathcal{N}$ of deficient analytic spread for which the points appearing in $\mathrm{Ass}_{X}(\mathcal{N}^{n}/\overline{\mathcal{M}^{n}})$ are pricesely the generic points of the irreducible components of $S_{i}(k)$. 
\begin{corollary}
Assume $\mathcal{M}$ is contained in  module $\mathcal{N}$ that has deficient analytic spread at each $x \in X$ with $\dim  X,x \geq 2$, such that the nonfree locus of
$\mathcal{N}/\mathcal{M}$ does not have codimension-one components. If $x \in \mathrm{Ass}_{X}(\mathcal{N}^{n}/\overline{\mathcal{M}^{n}})$ for some $n$ and $X$ is universally catenary, then $x$ is the generic point of an irreducible component of $S_{i}(\dim X_i,x+e-2)$ for some $i$. Conversely, if $x$ is the generic point of an irreducible component of $S_{i}(\dim X_i,x+e-2)$ for some $i$, then $x \in \mathrm{Ass}_{X}(\mathcal{N}^{n}/\overline{\mathcal{M}^{n}})$ for each $n$ large enough.
\end{corollary}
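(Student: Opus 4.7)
The corollary synthesizes the forward classification from Theorem \ref{ass points-int.cl.} with the converse direction from Theorem \ref{compl. desc.} (3), so the plan is to invoke each of these in the appropriate direction and verify that the hypotheses on $\mathcal{N}/\mathcal{M}$ and on $\mathcal{N}$ force the correspondence to be exact.

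For the forward direction, fix $x \in \mathrm{Ass}_X(\mathcal{N}^n/\overline{\mathcal{M}^n})$ for some $n$, with $X$ universally catenary. Theorem \ref{ass points-int.cl.} offers three alternatives. First, $x = \eta_i$ could be a generic point of $X_i$; this forces $p_i > e_i$ by part (1) of that theorem. Upper semicontinuity of fiber dimension together with Proposition \ref{structure morphism} (2) gives $S_i(e_i - 1) = X_i$, so $X_i$ is an irreducible component of $S_i(\dim X_i,\eta_i + e_i - 2)=S_i(e_i-2)$ with generic point $\eta_i$, and we are done. Second, $x$ could be the generic point of a codimension-one component of the nonfree locus $Z$ of $\mathcal{N}/\mathcal{M}$; this is exactly what the hypothesis ``$Z$ has no codimension-one components'' rules out. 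Third, $\codim(\overline{\{x\}},X_i) > 1$ for some $i$, in which case the universally catenary hypothesis and the last part of Theorem \ref{ass points-int.cl.} give exactly that $x$ is the generic point of an irreducible component of $S_i(\dim X_i, x + e_i - 2)$.

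For the converse, let $x$ be a generic point of an irreducible component of $S_i(\dim X_i, x + e_i - 2)$ for some $i$. When $\dim X, x \geq 2$ the deficient analytic spread hypothesis on $\mathcal{N}$ is in force at $x$, and Theorem \ref{compl. desc.} (3) applies verbatim, giving $x \in \mathrm{Ass}_X(\mathcal{N}^n/\overline{\mathcal{M}^n})$ for all $n$ large enough. The case $\dim X, x = 0$ corresponds to $x = \eta_i$, where $S_i(e_i - 2) = X_i$ automatically and the claim reduces to Theorem \ref{compl. desc.} (1) (when $p_i > e_i$). The intermediate case $\dim X, x = 1$ does not produce a new point: by the forward implication together with Theorem \ref{ass points-int.cl.} (2), an $x$ of this dimension in $\mathrm{Ass}_X$ would be a codimension-one generic point of $Z$, excluded by hypothesis.

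The proof is almost entirely a matter of synthesis rather than technique; the only conceptual observation is that case (i) of the forward classification (generic points $\eta_i$ with $p_i > e_i$) is subsumed by the $S_i$-description because $S_i(e_i - 2) = X_i$. The hardest prior input is really Theorem \ref{compl. desc.} (3) itself, whose proof routes through Theorem \ref{converse conormal} and uses the deficient analytic spread hypothesis to bound $\codim(D_j, B_i) \geq 2$ and thus produce a dimension contradiction; but at the level of this corollary no new machinery is introduced.
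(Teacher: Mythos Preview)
Your approach is essentially the paper's: invoke Theorem \ref{ass points-int.cl.} for the forward direction and Theorem \ref{compl. desc.} (3) for the converse. The paper's proof is two lines; you have unpacked the case analysis, which is fine, and you correctly observe something the paper leaves implicit, namely that the generic-point case $x=\eta_i$ is absorbed by the $S_i$-description because $S_i(e_i-1)=X_i$.

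One point in your converse argument is muddled. For $\dim X,x = 1$ you write that such an $x$ ``in $\mathrm{Ass}_X$ would be a codimension-one generic point of $Z$, excluded by hypothesis.'' But in the converse you are \emph{trying to prove} $x\in\mathrm{Ass}_X$, so reasoning about what membership in $\mathrm{Ass}_X$ would force is beside the point. The clean argument (which is what the paper's one-line ``the hypothesis on the nonfree locus implies $\dim X,x\ge 2$'' is gesturing at) is that the case is vacuous on the $S_i$-side: if $\dim X_i,x = 1$ then $S_i(\dim X_i,x + e_i - 2)=S_i(e_i-1)=X_i$, whose unique component has generic point $\eta_i\neq x$, so no such $x$ can be the generic point of a component of $S_i(\dim X_i,x+e_i-2)$. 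With that correction your proof matches the paper's.
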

\begin{proof}
The forward direction of the statement follows from Thm.\ \ref{ass points-int.cl.}. The reverse direction follows from Thm.\ \ref{compl. desc.} noting that the hypothesis on the nonfree locus of $\mathcal{N}/\mathcal{M}$ implies that $\dim X,x \geq 2$.
\end{proof}
Assume $\mathcal{M}$ is free of rank $e$ at the generic point of each irreducible component of $X$. Suppose $\mathcal{M}$ is embedded in a free $R$-module $\mathcal{F}$ of rank $p$. As usual denote by $Z$ the nonfree locus of $\mathcal{F}/\mathcal{M}$. Denote by $\mathcal{G}$ the set of the generic points of the irreducible components of $X$. In general, if $p>e$, it's entirely possible that the set $\mathrm{Ass}_{X}(\mathcal{F}^{n}/\overline{\mathcal{M}^{n}})$ is equal to $\mathcal{G}$ as demonstrated by the  next proposition.

\begin{proposition}\label{direct summand} Assume $X$ is a local universally catenary scheme with closed point $x_0$. Assume  $\mathcal{M}$ is free of rank $e$ at each point $x \neq x_0$ with $p>e$. Furthermore, suppose $Z$ does not contain points $x$ with $\dim X_i,x=1$ for $X_i$ containing $x$. Finally, assume $\mathcal{M}$ has deficient analytic spread. 
Then  
$\mathcal{G}=\mathrm{Ass}_{X}(\mathcal{F}^{n}/\overline{\mathcal{M}^{n}})$ for each $n$.
\end{proposition}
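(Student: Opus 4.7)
The plan is to establish $\mathcal{G} = \mathrm{Ass}_{X}(\mathcal{F}^{n}/\overline{\mathcal{M}^{n}})$ by verifying both inclusions. The forward inclusion is immediate from Thm.\ \ref{ass points-int.cl.}\,(1), since at every generic point $\eta_i$ the ranks satisfy $p_i = p > e = e_i$, so $\eta_i \in \mathrm{Ass}_{X}(\mathcal{F}^{n}/\overline{\mathcal{M}^{n}})$ for every $n$.

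For the reverse inclusion I would argue by contradiction. Suppose some $x \in \mathrm{Ass}_{X}(\mathcal{F}^{n}/\overline{\mathcal{M}^{n}})$ is not a generic point of any $X_i$. By Thm.\ \ref{ass points-int.cl.}\,(2), $x$ must lie in $Z$. The case $x = x_0$ is dispatched at once: Cor.\ \ref{McAdam} produces an irreducible component $D_j$ of the fiber of $\mathrm{Proj}(\mathcal{R}(\mathcal{M})) \to X$ over $x_0$ with $\codim D_j \leq 1$, directly contradicting the deficient analytic spread of $\mathcal{M}$.

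The delicate case, which I expect to be the main obstacle, is $x \neq x_0$. My plan is to localize at $x$ and apply Cor.\ \ref{McAdam} a second time. Set $X' := \mathrm{Spec}(\mathcal{O}_{X,x})$, a local universally catenary scheme with closed point $x$; since integral closure commutes with flat base change, we still have $x \in \mathrm{Ass}_{X'}(\mathcal{F}_x^{n}/\overline{\mathcal{M}_x^{n}})$. Because $x \neq x_0$, the hypothesis makes $\mathcal{M}_x$ free of rank $e$, so $\mathcal{R}(\mathcal{M}_x)$ is the polynomial ring $\mathcal{O}_{X,x}[T_1, \ldots, T_e]$ and the fiber of $\mathrm{Proj}(\mathcal{R}(\mathcal{M}_x))$ over $x$ is the single irreducible scheme $D' := \mathbb{P}^{e-1}_{k(x)}$. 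Prp.\ \ref{structure morphism}\,(2)--(3), applied over $X'$, give $\dim B'_i = \dim X'_i + e - 1$ and $\codim(D', B'_i) = \dim X'_i = \dim \mathcal{O}_{X_i, x}$ for each component $B'_i$ of $\mathrm{Proj}(\mathcal{R}(\mathcal{M}_x))$ above the component $X'_i$ of $X'$; consequently $\codim D' \geq 2$ in $\mathrm{Proj}(\mathcal{R}(\mathcal{M}_x))$ as soon as some $\dim \mathcal{O}_{X_i, x} \geq 2$.

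The crux is therefore to verify that $\dim \mathcal{O}_{X_i, x} \geq 2$ for some (in fact every) $X_i$ containing $x$: since $x \neq \eta_i$ for any $i$ we have $\dim \mathcal{O}_{X_i, x} \geq 1$, and the combination of $x \in Z$ with the standing hypothesis on $Z$ excludes the value $1$. Hence $\mathcal{M}_x$ has deficient analytic spread at $x$, and Cor.\ \ref{McAdam} over $X'$ delivers the desired contradiction, completing the proof.
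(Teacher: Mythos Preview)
Your proof is correct and follows essentially the same route as the paper. The only cosmetic difference is that for the case $x\neq x_0$ the paper invokes the packaged classification Thm.~\ref{ass points-int.cl.}\,(3) (showing $x\notin S_i(\dim X_i,x+e_i-2)$ because the fiber has dimension $e-1$), whereas you localize at $x$ and apply Cor.~\ref{McAdam} directly; since Thm.~\ref{ass points-int.cl.} is itself proved by localizing and applying Thm.~\ref{conormal}, the two arguments unwind to the same computation.
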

\begin{proof}
Because $p>e$ by Thm.\ \ref{ass points-int.cl.} \rm{(1)} for each $i$ we have $\eta_i \in \mathrm{Ass}_{X}(\mathcal{F}^{n}/\overline{\mathcal{M}^{n}})$ for all $n$.

Suppose $x \neq \eta_i$ and $x \in \mathrm{Ass}_{X}(\mathcal{F}^{n}/\overline{\mathcal{M}^{n}})$ for some $n$. Note that by Thm.\ \ref{conormal} $x \neq x_0$ because $\codim D_j \geq 2$. 
By Thm.\ \ref{ass points-int.cl.} \rm{(2)} $x \in Z$. Because $Z$ does not contain points $y$ such that $\dim X_i,y=1$, then $\dim X_i,x \geq 2$ for each $X_i$ that contains $x$. By assumption $\mathcal{M}_x$ is free of rank $e$. Hence $\dim b_{i}^{-1}x = e-1$, whereas $\dim X_i,x+e-2 \geq e$. Therefore, $x \not \in S_{i}(\dim X_i,x+e-2)$ for each $X_i$ that contains $x$. Hence by Thm.\ \ref{ass points-int.cl.} $\rm{(3)}$ $x \not \in \mathrm{Ass}_{X}(\mathcal{F}^{n}/\overline{\mathcal{M}^{n}})$ for each $n$ which is contradiction. The proof is now complete.
\end{proof}

Nontrivial examples of $\mathcal{M}$ and $\mathcal{F}$ that satisfy the assumptions of Prp.\ \ref{direct summand} appear quite naturally in geometry. Consider a smooth complex projective variety $Y \subset \mathbb{P}^{N}$ with deficient dual, i.e. dual that has smaller dimension than the expected one, or in this case smaller than $N-1$ (cf.\ \cite{Ein1} and \cite{Ein2} for classification results and examples). Consider the affine cone over it. Denote by $X$ the germ of the cone at its vertex and denote the vertex by $x_0$. Since the base of $X$ is deficient, $X$ is not a complete intersection. Let $f_1, \ldots, f_p$ be the defining equations of $X$ in $\mathbb{C}^{N+1}$. Let $\mathcal{M}$ be the Jacobian module of $X$ - that is the $\mathcal{O}_{X,x_0}$-module spanned by the  columns of the  Jacobian matrix. It sits inside a free module $\mathcal{F}$ or rank $p$. Because $X$ has an isolated singularity, then $\mathcal{M}$ is a direct summand of $\mathcal{F}$ of rank $e$ locally off the vertex of the cone. Because $X$ is not a complete intersection we have $p>e$. Finally, because the base of the cone has deficient dual, then the conormal space of $X$, which is $\mathrm{Proj}(\mathcal{M})$, has fiber over the vertex of dimension less than expected (c.f. Sct.\ 1.5 in \cite{KT}). This bound in our setup translates to $\dim b^{-1}x_0 < \dim X+e-2$.

\end{document}